
\documentclass[11 pt]{amsart}
\usepackage{amsfonts}
\usepackage{latexsym,amsmath,amsfonts}
\usepackage{amssymb}
\usepackage{graphicx}
\usepackage[dvips]{color}
\setlength{\topmargin}{0.5cm}
\setlength{\textheight}{22cm}
\setlength{\oddsidemargin}{0.6cm}
\setlength{\evensidemargin}{0.6cm}
\setlength{\textwidth}{15cm}
\setlength{\arraycolsep}{1.5pt}

\newtheorem{theorem}{Theorem}[section]
\newtheorem{thm}[theorem]{Theorem}
\newtheorem{pro}[theorem]{Proposition}

\newtheorem{lemma}[theorem]{Lemma}
\newtheorem{remark}[theorem]{Remark}

\numberwithin{equation}{section}

\def\m{\medskip}

\def\cal{\mathcal }
\def\R{\mathbb R}
\def\N{\mathbb N}
\def\Z{\mathbb Z}

\def\mathscr{\mathcal }
\def\CJ{\mathcal J}
\def\CL{\mathcal L}

\def\m{\mathbf m}
\def\1{\mathbf 1}

\newcommand{\blambda}{{\boldsymbol{\lambda}}}

\newcommand{\balpha}{{\boldsymbol{\alpha}}}
\newcommand{\brho}{{\boldsymbol{\rho}}}



\begin{document}

\title[Higher dimensional Frobenius problem]{Higher dimensional  Frobenius problem:
Maximal saturated cone, growth function and rigidity}

\author{Ai-hua Fan} \address{Department of Mathematics and Stochastic, Hua Zhong Normal University, Wuhan 430072,
China \& UMR 7352,  CNRS, Universit\'e de Picardie, 33 rue
Saint Leu, 80039 Amiens, France} \email{ ai-hua.fan@u-picardie.fr}

\author{Hui Rao} \address{Department of Mathematics and Stochastic, Hua Zhong Normal University, Wuhan 430072,
China} \email{ hrao@@mail.ccnu.edu.cn}

\author{Yuan Zhang$\dagger$} \address{Department of Mathematics and Stochastic, Hua Zhong Normal University, Wuhan 430072,
China} \email{ hrao@@mail.ccnu.edu.cn}
\date{} 
\thanks {The work is partially supported by NSFC No. 11171128, NSFC No. 11431007, NSFC No.11471132.
It is also partially supported by the self-determined research funds of CCNU (No. CCNU14Z01002) from the basic research and operation of MOE. }

\thanks{
 {\indent\bf Key words and phrases:}\ Frobenius problem, saturated cone, entropy, directional growth function.}

\thanks{$\dagger$ The correspondence author.}

\begin{abstract} We  consider $m$ integral
vectors $X_1,\dots,X_m \in \mathbb{Z}^s$  located in a half-space of $\mathbb{R}^s$ ($m\ge s\geq 1$) and study the structure of the additive semi-group
$X_1 \mathbb{N} +\dots + X_m \mathbb{N}$. We introduce and study
maximal saturated cone and directional growth function which describe some aspects of the structure of the semi-group.  When the vectors
$X_1, \cdots, X_m$ are located in a fixed hyperplane, we obtain  an explicit formula for the directional growth function and we show that  this function
completely characterizes the defining data $(X_1, \cdots, X_m)$ of the semi-group. The last result
will be applied to
 the study of Lipschitz equivalence of Cantor sets (see \cite{RZ14}).
\end{abstract}

\maketitle
\section{Introduction}

We consider $m$ integral
vectors $X_1,\dots,X_m$   in the lattice $\mathbb{Z}^s$ ($m\ge 2$, $s\geq 1$) which are assumed to be in a half-space. That is to say,  there is a vector
$\balpha\in {\mathbb R}^s$ such that
$\langle X_j, \balpha\rangle>0$  for all $j=1,\dots, m$
where $\langle \cdot, \cdot \rangle$ denotes the inner product on the Euclidean space $\R^s$.
We also assume that $X_1,\dots,X_m$  span the vector space $\mathbb{R}^s$. But  $X_j$'s may not be distinct.
 Let
\begin{equation}\label{equ9}
\CJ=X_1\mathbb{N}+\dots+X_m\mathbb{N}
\end{equation}
be the  semi-group generated by $X_1,\dots, X_m$ where $\N=\{0,1,2,\dots\}$ denotes the set of natural numbers.   By \emph{higher dimensional Frobenius problem} we mean the study of the
structure of the semi-group $\CJ$ defined by (\ref{equ9}). This will be our main concern in the present paper.

In the one-dimensional case (i.e. $s=1$),  we are given $m$ relatively
prime positive integers $a_1, \cdots , a_m$ instead of $X_1, \cdots, X_m$. Then $$
{\mathcal J}=a_1\mathbb{N}+\dots+a_m\mathbb{N}
$$
which is the set of all natural numbers representable as a non-negative integer combination of $a_1, \cdots, a_m$. Sylvester \cite{Syl} showed that there exists a minimum positive number $f(a_1,\dots,a_m)$, which is now called the Frobenius number, such that
$$
f(a_1,\dots,a_m)+1+ \mathbb{N}\subset {\mathcal J}.
$$
The following
so-called {\em diophantine Frobenius Problem} was raised by F. G. Frobenius
(see \cite{Alf05}):  find $f(a_1,\dots,a_m)$ the largest natural number
that is not
representable as a non-negative integer combination of $a_1, \cdots, a_m$.

Sylvester's result says that a translation of the set $\mathbb{N}$ is contained in $\mathcal J$. Thus the structure of $\mathcal J$ is rather well described by the Frobenius number.
It turned out that the knowledge of $f(a_1,\dots,a_m)$ has been extremely useful to
investigate many different problems. When $m=2$,
 it is well-known that $$f(a_1,a_2)=a_1a_2-a_1-a_2.$$
 For example, if  $a_1=3$ and $a_2=5$, then   $$
  {\mathcal J} =3\mathbb{N}+5\mathbb{N}=\{0,3,5,6,8,9,10,11,\dots\}$$
 and $f(3,5)=7$. However,  for $m\geq 3$, there is no closed form
 for $f(a_1, \cdots, a_m)$. It is proved that the Frobenius problem is NP-hard under
Turing reductions.  The book of Ram\'irez-Alfons\'in \cite{Alf05} (2005)
 is a nice survey on the Frobenius problem.

 Our study of higher dimensional Frobenius problem is motivated by the study of Lipschitz equivalence of Cantor sets.
Lipschitz equivalence preserves many
important properties of a self-similar set.
A survey on   Lipschitz equivalence of Cantor sets can be found in \cite{RRW13}, see also \cite{Lau13}.
 In this area, a fundamental  problem initially raised by Falconer and Marsh \cite{FaMa92}, which is now called the Falconer-Marsh problem, is as follows:
 Assume that   two self-similar sets $E$ and $F$  are dust-like. How  is the Lipschitz equivalence related to the contraction ratios of $E$ and $F$ ?
Falconer and Marsh \cite{FaMa92} established several basic techniques and results in 1992. But there is no  progress until recent works of
Rao, Ruan and Wang \cite{RRW12} (2012) and Xiong and  Xi \cite{XiXi13} (2013).

 Xiong and  Xi \cite{XiXi13}  studied the case
when  $E$ and $F$ have rank $1$ (i.e. contraction ratios are powers of a fixed number)  and discovered that  the problem is closely related to   the class number of the field generated by the ratios.

Rao, Ruan and Wang \cite{RRW12} introduce a Lipschitz invariant described by a so-called \emph{matchable condition}. They solved the problem when both $E$ and $F$ have full rank or both of them
are two-branch self-similar sets.
However, the matchable condition is hard to check in general. The present paper and the sequential paper \cite{RZ14} introduce new techniques to handle the matchable condition.
We associate to each self-similar set a higher dimensional Frobenius problem.
We find that  this   is closely related to the matchable condition.
Thanks to this link, in  \cite{RZ14}, we
solve the Falconer-Marsh problem in the case that
  the contraction ratios of the self-similar sets satisfy a coplanar condition.

 In the following subsections we will describe in some detail our results obtained in the paper.
 Here is a resum\'e.
 Two aspects of the structure of the semi-group $\cal J$ defined by (\ref{equ9}) will be first
 studied: one is the existence and finiteness of maximal saturated cones (Section 2) and the other
 is the growth function which describes how many ways a given vector $z$ in $\cal J$
 can be represented by finite sums of terms from $\{X_1, ..., X_m\}$. We shall prove that
 it is a function which increases exponentially as $\|z\|$ tends to the infinity and that the increasing rate depends on the direction along which $\|z\|$ tends to the infinity (Section 3 and Section 4).
 Thus we obtain the so-called directional growth function.
 An explicit formula is obtained for the directional growth function when the vectors
 $X_1, \cdots, X_m$ are located on a same hyperplane (Section 5). The last two sections
 are devoted to the rigidity. The rigidity means, if two growth functions are equal, then the corresponding
 semi-groups are equal.  Furthermore, the sets of vectors defining the semi-groups are the same.
 These rigidity results are proved under the assumption that the defining vectors are coplanar.

 In the following subsections, we state our results in some details.

\subsection{Maximal saturated cone}
Recall that
$X_1,\dots,X_m$ are $m$ given vectors in the lattice $\mathbb{Z}^s$ ($m\ge 2$, $s\geq 1$) such that
$\langle X_j, \balpha\rangle>0$  for all $j=1,\dots, m$ and for some vector $\balpha \in \mathbb{R}^s$.
For simplicity we use ${ X}$ to denote the set $\{X_1,\dots, X_m\}$. Let
\begin{equation}
{\mathcal L}:= {\mathcal L}_X:= X_1\Z+\cdots+X_m\Z
\end{equation}
denote  the lattice generated by $X_1,\dots, X_m$. Let
\begin{equation}
{\mathbf C}_X:=X_1\R^++\cdots+X_m\R^+
\end{equation}
  denote the convex cone generated by $X_1,\dots, X_m$, where $\R^+$ is the set of non-negative real numbers.
 Clearly, the semi-group $\CJ$ is a subset of the lattice ${\mathcal L}$ so that
 $$
 \forall g\in \CJ, \quad (g+{\mathbf C}_X)\cap {\mathcal J}\subset (g+{\mathbf C}_X)\cap \CL. $$
If
$(g+{\mathbf C}_X)\cap {\mathcal J}=(g+{\mathbf C}_X)\cap \CL$,  the cone
$g+{\mathbf C}_X$ is said to be \emph{saturated}. That means, every lattice point in the cone
$g + {\mathbf C}_X$ is in the semi-group.
Moreover, a saturated cone is said to be \emph{maximal} if it is not a subset of any other saturated cone.
Then we define the \emph{Frobenius set} to be
\begin{equation}\label{Frobenius}
{\mathcal F}:=\{g\in \CJ: ~g+{\mathbf C}_X \text{ is a maximal saturated cone}\}.
\end{equation}
Let us look at  the one-dimensional case considered above with $X_j=a_j$. The cone ${\mathbf C}_X$
 is then equal to $\mathbb{R}^+$. The cone  $g+\R^+$ is saturated  if and only if $g>f(a_1,\dots, a_m)$. Therefore we have ${\mathcal F}=\{f(a_1,\dots, a_m)+1\}$, the singleton consisting of
the smallest natural number such that  all larger natural numbers  are representable as a non-negative  integer combination of $a_1, \cdots, a_m$.

A natural question is ``{\em how many maximal saturated cones are there} ?'' Our answer is

\begin{theorem}\label{thm-MSC} The Frobenius set ${\mathcal F}$ is non-empty and finite.
\end{theorem}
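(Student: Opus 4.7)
The plan is to use the partial order $\preceq$ on $\mathcal{L}$ defined by $g' \preceq g$ iff $g - g' \in \mathbf{C}_X$; in this language $g' + \mathbf{C}_X \supseteq g + \mathbf{C}_X$ is equivalent to $g' \preceq g$, and $\mathcal{F}$ is exactly the set of $\preceq$-minimal elements of
\[
\mathcal{S} := \{g \in \mathcal{J} : g + \mathbf{C}_X \text{ is saturated}\}.
\]
So the theorem reduces to two assertions: (i) $\mathcal{S} \neq \emptyset$, and (ii) $(\mathbf{C}_X \cap \mathcal{L}, \preceq)$ is a well-quasi-order, since $\mathcal{S} \subseteq \mathbf{C}_X \cap \mathcal{L}$.

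For (i) I would give a quantitative Sylvester-type construction. Let $e = X_1 + \cdots + X_m$ and take $g_0 = Ne$ for an integer $N$ to be chosen. Given any lattice point $y \in g_0 + \mathbf{C}_X$, writing $y - g_0 = \sum s_j X_j$ with real $s_j \geq 0$ yields a non-negative real solution $(t_j) := (N + s_j)$ of $\sum t_j X_j = y$. The set of integer solutions to this equation is a coset of the kernel lattice $K := \ker\bigl(\phi : \mathbb{Z}^m \twoheadrightarrow \mathcal{L},\ (c_j) \mapsto \sum c_j X_j\bigr)$, a rank-$(m-s)$ sublattice whose $\ell^\infty$-covering radius inside its real span is some finite $D$. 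Hence the coset contains an integer solution $(c_j)$ with $|c_j - t_j| \leq D$, so $c_j \geq N - D$. Choosing $N \geq D$ makes every $c_j$ non-negative, whence $y \in \mathcal{J}$ and $g_0 + \mathbf{C}_X$ is saturated.

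For (ii) I would first note that $\mathbf{C}_X$ is a pointed rational polyhedral cone, pointedness coming from the half-space hypothesis via the linear functional $\langle \cdot, \balpha \rangle$. Gordan's lemma then provides a finite semi-group generating set $Z_1, \ldots, Z_k$ for $\mathbf{C}_X \cap \mathcal{L}$, so the surjection $\psi : \mathbb{N}^k \twoheadrightarrow \mathbf{C}_X \cap \mathcal{L}$, $(n_i) \mapsto \sum n_i Z_i$, is monotone from the coordinate-wise order to $\preceq$. Dickson's lemma says $(\mathbb{N}^k, \leq)$ is a well-quasi-order, and this property transports along the monotone surjection $\psi$ to $(\mathbf{C}_X \cap \mathcal{L}, \preceq)$. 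The set $\mathcal{S}$ is upward-closed in $(\mathcal{L}, \preceq)$: if $g \in \mathcal{S}$ and $g \preceq g'$ with $g' \in \mathcal{L}$, then $g' \in g + \mathbf{C}_X$ lies in $\mathcal{J}$ by saturation, and $g' + \mathbf{C}_X \subseteq g + \mathbf{C}_X$ is automatically saturated. Being a non-empty subset of a well-quasi-order, $\mathcal{S}$ admits $\preceq$-minimal elements and they form an antichain, hence a finite set; this set is $\mathcal{F}$.

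The main obstacle is step (i), the higher-dimensional Sylvester bound: constructing a single saturated shifted cone requires the quantitative estimate that a non-negative real representation of a lattice point can be converted into a non-negative integer one by moving within the kernel $K$, with error bounded by the covering radius of $K$. Once (i) is secured, the rest is the standard Gordan--Dickson well-quasi-order machinery applied to the upward-closed set $\mathcal{S}$.
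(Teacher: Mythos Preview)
Your proof is correct, and the underlying ideas coincide with the paper's: both parts of the argument ultimately rest on a Sylvester-type construction for existence and on Dickson's lemma for finiteness.

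For part (i), your covering-radius argument and the paper's are nearly the same. The paper writes any $z \in (g_0 + \mathbf{C}_X) \cap \mathcal{L}$ as $g_0 + x + \omega$ with $x \in \mathcal{J}$ and $\omega$ in the finite set $\{\sum c_j X_j : c_j \in [0,1)\} \cap \mathcal{L}$, then absorbs $\omega$ into $g_0$; you instead approximate a non-negative real solution by an integer one using the covering radius of the kernel lattice. These are two phrasings of the same estimate, and both lead to $g_0 = N\sum X_j$ for a suitable $N$.

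For part (ii), you invoke Gordan's lemma to realize $(\mathbf{C}_X \cap \mathcal{L}, \preceq)$ as a monotone image of $(\mathbb{N}^k, \leq)$ and then transport the well-quasi-order. The paper takes a more direct route: for each $g \in \mathcal{F}$ it chooses a representation $g = \sum u_j(g) X_j$ with $u(g) \in \mathbb{N}^m$, observes that distinct elements of $\mathcal{F}$ give incomparable $u(g)$'s (otherwise one saturated cone would strictly contain another), and concludes finiteness by Dickson's lemma in $\mathbb{N}^m$. Your detour through Gordan is unnecessary: the obvious surjection $\mathbb{N}^m \to \mathcal{J}$, $(n_j) \mapsto \sum n_j X_j$, is already monotone onto the set where $\mathcal{S}$ lives, and this is exactly what the paper exploits. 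Your well-quasi-order framing is cleaner conceptually (identifying $\mathcal{F}$ as the minimal elements of the upward-closed set $\mathcal{S}$), while the paper's version is shorter and avoids any external lemma beyond Dickson.
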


Here is an example where the Frobenius set has two elements.
{\rm Let $s=2$ and $X=\{(3,0), (1,2), (0,3)\}$. Then
${\mathbf C}_X=\R^+ \times \R^+ $ and
$$
\CL=\{(a,b)\in \Z^2; ~a+b \text{ is a multiple of }3\},
$$
$$
\CJ=\{(a,b)\in \N^2; ~a+b \text{ is a multiple of }3 \text{ and } b\neq 1\}.
$$
We find  ${\mathcal F}=\{(1,2),(0,3)\}$. This is shown
in Figure \ref{Fig1}.
}

\begin{figure}
  \includegraphics[width=5cm]{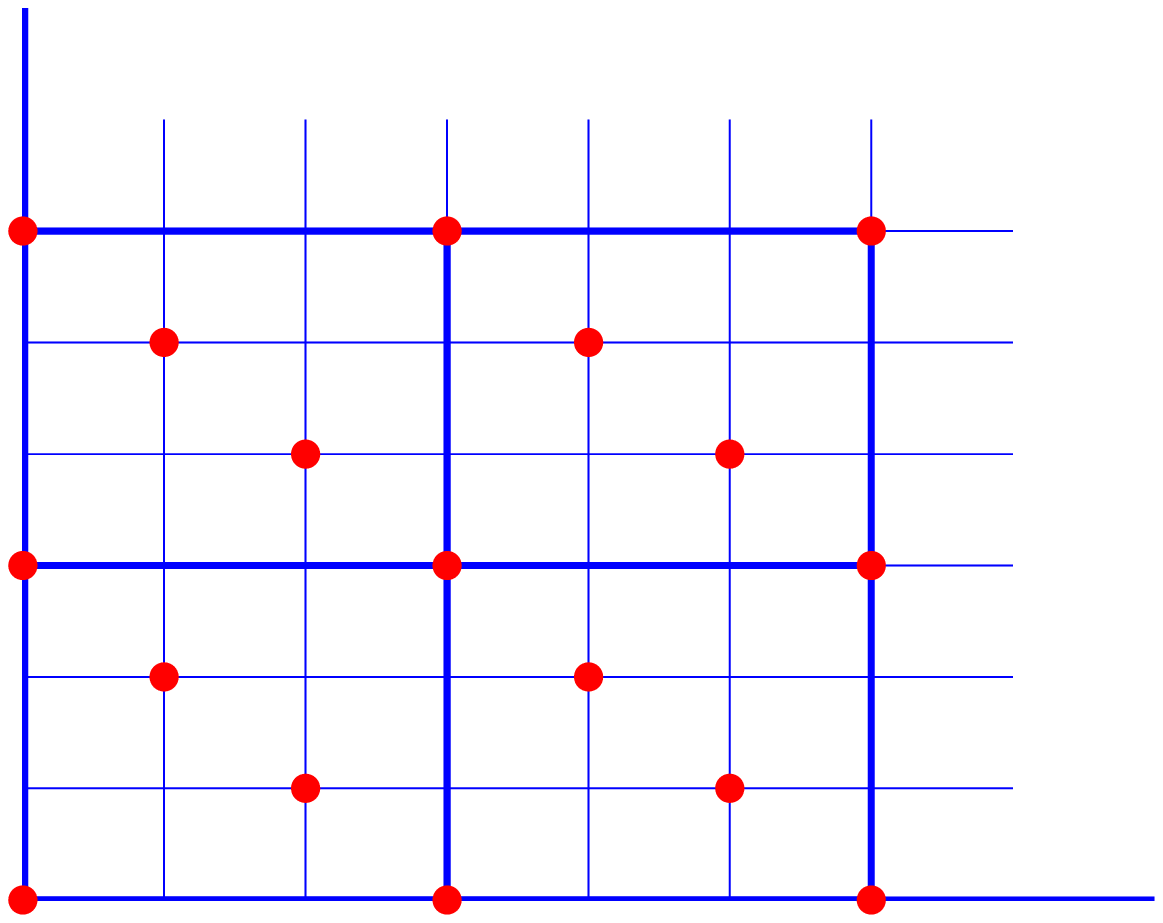}
  \quad  \includegraphics[width=5cm]{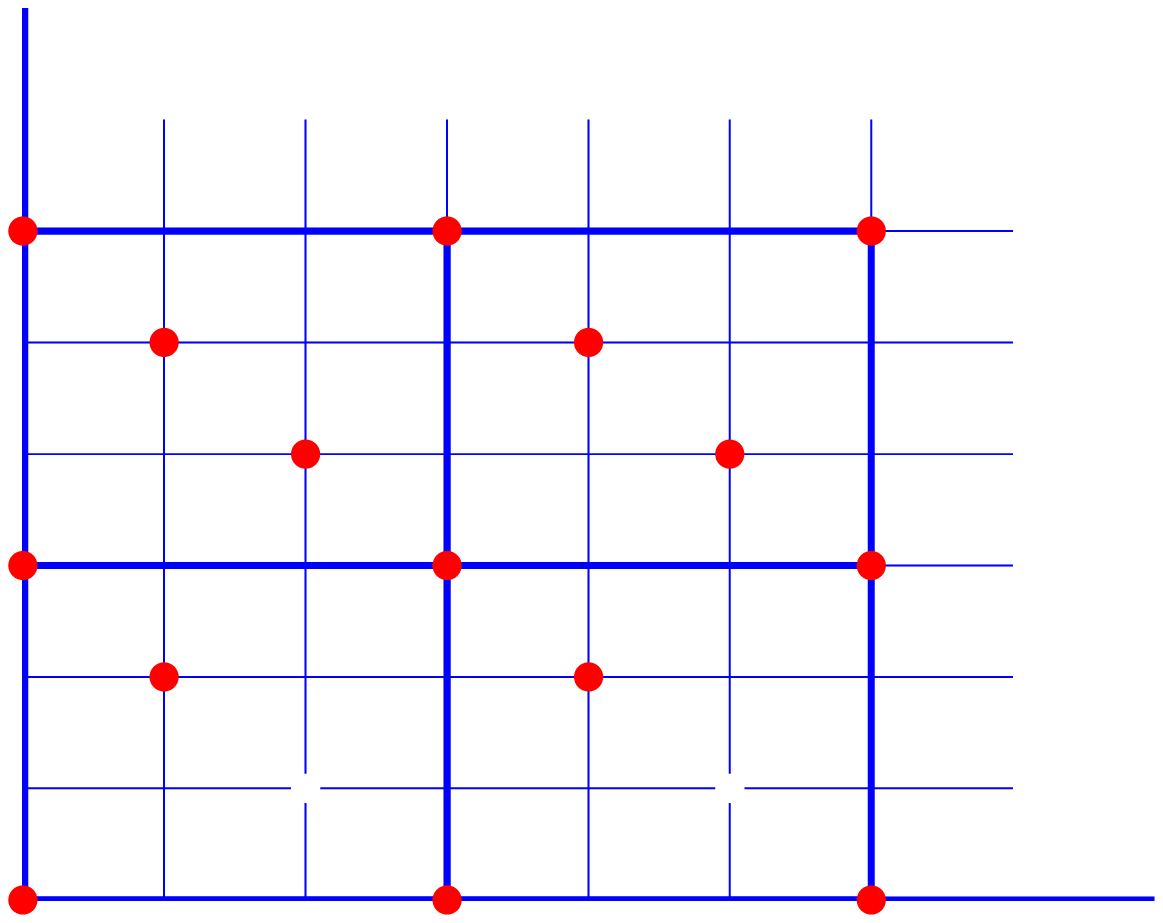}\\
  \caption{Example 1.3: $\CL$ (left) and $\CJ$ (right).}
  \label{Fig1}
\end{figure}

\subsection{Multiplicity of representations and directional growth function}
For any vector $z$ in the semi-group $\cal J$, we are interested in the number of
representations $z = X_{i_1}+ \cdots + X_{i_n}$ where $n\ge 1$ and $i_k$'s
are taken from $\{1, 2, \cdots, m\}$. As we shall see, these numbers reflect some
property of the structure of the semi-group.

Let  $\Sigma_m^*:=\bigcup_{k=0}^{\infty}\{1,2,\dots,m \}^k$ be the set of words over the alphabet
$\{1,2,\dots,m \}$,
which can also be considered as a tree.
For any word $\mathbf{i}=i_1\dots i_n \in \Sigma_m^*,$  define
\begin{equation}\label{kappa}
\kappa (\mathbf{i})=X_{i_1}+\cdots+X_{i_n}.
\end{equation}
We consider $\kappa: \Sigma_m^* \to \mathbb{Z}^s$ as the {\em walk} in $\mathbb{Z}^s$ guided by $X_1,\dots,X_m$ along with the tree $\Sigma_m^*$.
Elements in $\Sigma_m^*$ are also called {\em pathes} of the walk and $\kappa (\mathbf{i})$ is called the {\em visited position}
following the path $\mathbf{i} $.
A point $z\in \mathbb{Z}^s$ is said to be {\em attainable} if $z=\kappa(\mathbf i)$ for some ${\mathbf i}\in\Sigma_m^*.$
 Clearly the set of attainable positions is exactly the semi-group ${\mathcal J}$.
 A second question we ask is ``{\em How many times is an attainable position  visited} ?''

To partially answer this question, for $z\in\mathcal J$, we define
the \emph{multiplicity} of $z$ to be
\begin{equation}\label{equ10}
{\mathbf m}(z):=\# \{\omega\in \Sigma_m^*; \ \kappa(\omega)=z\}.
\end{equation}
We extend the function ${\mathbf m}$ to the convex cone ${\mathbf C}_X$ as follows.
For any point $x\in {\mathbf C}_X$ but not in ${\mathcal J}$, instead of setting ${\mathbf m}(x)=0$,
  we define its multiplicity  to be the multiplicity of the point in $\CJ$ which
is nearest to $x$. More precisely,
\begin{equation}
{\mathbf m}(x):=\min \{{\mathbf m}(z); \ z\in {\mathcal J} \text{ and } \|x-z\|=d(x,{\mathcal J})\},
\end{equation}
where $\|\cdot\|$ denotes the Euclidean norm and $d(x,{\mathcal J}):=\min\{\|x-z\|; ~z\in {\mathcal J}\}$.

In  the one-dimensional case,  the multiplicity ${\mathbf m}$ restricted on $\mathcal J$ satisfies the linear recurrent relation
$$
{\mathbf m}(n)=\sum_{j=1}^m {\mathbf m}(n-a_j).
$$
Hence  we can obtain an explicit formula for
${\mathbf m}(n)$. It is then easy to show that ${\mathbf m}(n)$ is
of the same exponential order as $\beta^n$
where $\beta$ is the largest root of the equation
$$
x^{a_m}-\sum_{j=1}^m  x^{a_m-a_j}=0.
$$
(See, for instance, \cite{Car}).
But in the higher dimensional case, it is hard to obtain an explicit formula for the multiplicity. Nevertheless, we will prove the following exponential growth.

\begin{theorem}\label{thm-growth}
For any unit vector $\theta\in {\mathbf C}_{X}$, the following limit exists
\begin{equation}\label{growth}
\gamma(\theta)=\underset{k \rightarrow  \infty}{\lim}\frac{\log \mathbf{m}(k\theta)}{k}.
\end{equation}
\end{theorem}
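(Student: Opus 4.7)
The plan is to prove existence of the limit by establishing a quasi-superadditive inequality of the form $\log \bm((k+\ell)\theta) \geq \log \bm(k\theta) + \log \bm(\ell\theta) - C$ with $C$ independent of $k$ and $\ell$, and then invoking the generalized Fekete subadditive lemma. Finiteness of the limit is automatic from the crude upper bound $\bm(z) \leq m^{\lfloor \|z\|/\min_j \|X_j\|\rfloor}$ for $z \in \CJ$, since any $\omega \in \Sigma_m^*$ with $\kappa(\omega) = z$ has length at most $\|z\|/\min_j \|X_j\|$.

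The first and most essential step is the concatenation inequality
$$
\bm(z + z') \geq \bm(z)\,\bm(z') \qquad (z, z' \in \CJ).
$$
I would prove this by noting that the map $(\omega, \omega') \mapsto \omega\omega'$ sends $\kappa^{-1}(z) \times \kappa^{-1}(z')$ into $\kappa^{-1}(z+z')$ and is injective. Injectivity is where the half-space hypothesis enters: if $\omega_1 \omega_1' = \omega_2 \omega_2'$ with $|\omega_1| < |\omega_2|$, then $\omega_2 = \omega_1 \tau$, so $\kappa(\tau) = z - z = 0$; but $\langle X_j, \balpha\rangle > 0$ for all $j$ forces any word $\tau$ with $\kappa(\tau) = 0$ to be empty, a contradiction. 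Hence $|\omega_1| = |\omega_2|$, and so $\omega_1 = \omega_2$ and $\omega_1' = \omega_2'$.

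Next I would transfer this to the nearest-point definition of $\bm(k\theta)$. Let $z_k \in \CJ$ realize the minimum defining $\bm(k\theta)$. Theorem \ref{thm-MSC} supplies a non-empty finite Frobenius set $\SF$, and together with the saturation property of each $g + \mathbf{C}_X$, $g \in \SF$, this forces $\|z_k - k\theta\| \leq D$ for a constant $D$ depending only on $X_1, \dots, X_m$, once $k$ is sufficiently large. Applying the concatenation inequality to $z_k$ and $z_\ell$ yields
$$
\bm(z_k + z_\ell) \geq \bm(k\theta)\,\bm(\ell\theta),
$$
with $z_k + z_\ell \in \CJ$ within distance $2D$ of $(k+\ell)\theta$. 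It remains to compare this with $\bm((k+\ell)\theta) = \bm(z_{k+\ell})$.

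The main obstacle will be precisely this last comparison, which reduces to a local boundedness lemma: for $y, y' \in \CJ$ lying in a common saturated cone and satisfying $\|y - y'\| \leq R$, the ratio $\bm(y)/\bm(y')$ is bounded by a constant depending only on $R$ and on the data. I would approach it by decomposing the bounded lattice vector $w = y - y'$ as $w = w^+ - w^-$ with $w^\pm \in \CJ$ of bounded norm, observing that $y + w^+ = y' + w^-$, and applying the concatenation inequality at this common point together with a bounded number of iterations of the recurrence $\bm(z) = \sum_j \bm(z - X_j)$. Granted this lemma, the quasi-superadditivity stated at the outset follows, and the generalized Fekete lemma yields $\gamma(\theta) = \lim_k \log \bm(k\theta)/k = \sup_k (\log \bm(k\theta) - C)/k$, completing the argument.
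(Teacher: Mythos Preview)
Your overall strategy---concatenation inequality, then a comparison of $\m(z_{k+\ell})$ with $\m(z_k+z_\ell)$, then a Fekete-type lemma---matches the paper's. The gap is in the comparison step: the local boundedness lemma you state is false. Take $X=\{(1,0),(0,1)\}$, so $\CJ=\N^2$, $\m(a,b)=\binom{a+b}{a}$, and the whole cone $(\R^+)^2$ is already saturated. Then $y=(n,1)$ and $y'=(n,0)$ lie in this saturated cone with $\|y-y'\|=1$, yet $\m(y)/\m(y')=n+1$. Your proposed argument cannot repair this: concatenation gives $\m(y'+w^+)\geq \m(y')$, and the recurrence $\m(z)=\sum_j\m(z-X_j)$ gives $\m(z-X_j)\leq \m(z)$; both inequalities point the same way, so no finite combination of them bounds $\m(y'+w^+)/\m(y')$ from above.

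The paper's fix is to accept that the ratio is only bounded by a \emph{polynomial} in $\|z\|$ (Theorem~\ref{Q(x)}), proved via a Hausdorff-distance estimate on the representation sets $A(z)=\{u\in\N^m:\sum u_jX_j=z\}$ (Theorem~\ref{d_H}) together with a comparison of the multinomial terms $|u|!/u!$ appearing in $\m(z)=\sum_{u\in A(z)}|u|!/u!$. This yields
\[
\log\m((n+p)\theta)\ \geq\ \log\m(n\theta)+\log\m(p\theta)-c\log(n+p),
\]
and the logarithmic defect is absorbed by a strengthened subadditive lemma (Lemma~\ref{lem-subadd}) rather than the classical Fekete lemma.
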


We call $\gamma$ the \emph{directional growth function} of the semi-group $\mathcal J$. 
It
 describes the exponential increasing speed of the multiplicity along the direction $\theta$.
We will first prove that the multiplicity function varies slowly in the sense that the quotient
 of $\m(z)$ and $\m(z')$ is of polynomial order of $\|z\|$ if $z'$ and $z$ have a bounded distance (Theorem \ref{Q(x)}). We will then prove that the sequence
$(\log \m(k\theta))_{k\geq 1}$ is subadditive in some weak sense, which is sufficient to ensure the existence of the limit in \eqref{growth}, according to Lemma \ref{lem-subadd} which  strengthens
a classical result on sub-additive sequence.

\begin{figure}
  \includegraphics[width=6cm]{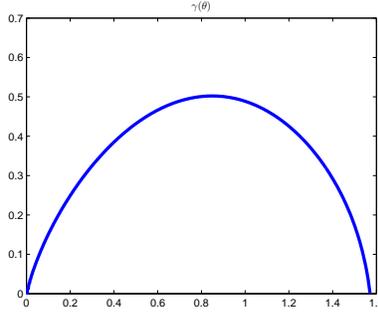}\\
  \caption{The function $\gamma(\theta)$ for $X=\{(3,0),(1,2),(0,3\}$.}
  \label{fig_gamma}
\end{figure}

\subsection{Calculation of $\gamma(\theta)$ when $X_1, \cdots, X_m$ are coplanar}
In general, it is difficult to obtain an explicit formula of $\gamma(\theta)$. We will be confined
to a formula under the condition that  $X_1, \cdots, X_m$ are coplanar.

We say that $X_1,\dots, X_m$ are \emph{coplanar} if they locate on a same hyper-plane, i.e.
there exists a vector $\eta \in \R^s$ such that
\begin{equation}\label{co-plane}
\langle \eta,  X_j \rangle =1, \quad j=1,\dots, m.
\end{equation}
To be more precise, we say $X_1,\dots, X_m$ are \emph{$\eta$-coplanar}.
Let $p=(p_1,\dots, p_m)$ be a probability vector. The {\em entropy} of $p$ is defined as
$$
h(p)=-\sum_{j=1}^m p_j\log p_j.
$$

\begin{theorem}\label{thm-entropy} Suppose that $X_1,\dots, X_m$ are $\eta$-coplanar.
For any  unit vector $\theta$ in  the cone ${\mathbf C}_X$, we have
\begin{equation}\label{entropy}
\gamma(\theta) = {\langle \theta, \eta \rangle } \sup \left \{ h(p);~ p_1X_1+\cdots+p_mX_m=\frac{\theta}{\langle \theta, \eta\rangle} \right \}.
\end{equation}
\end{theorem}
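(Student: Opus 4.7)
The plan is to exploit the coplanarity assumption $\langle \eta, X_j\rangle = 1$, which forces every representation $z = X_{i_1} + \cdots + X_{i_n}$ of $z \in \mathcal{J}$ to have length exactly $n = \langle \eta, z\rangle$. Grouping the letters of a word by value, the multiplicity can thus be written as a sum of multinomial coefficients:
\begin{equation*}
\mathbf{m}(z) = \sum_{\nu}\binom{n}{n_1,\dots,n_m},
\end{equation*}
the sum being taken over $\nu = (n_1,\dots,n_m) \in \mathbb{N}^m$ with $n_1 + \cdots + n_m = n$ and $n_1 X_1 + \cdots + n_m X_m = z$. Note that if $\sum p_j X_j = \theta / c$ with $c := \langle \theta, \eta\rangle$ and $p_j \geq 0$, then applying $\langle \eta, \cdot\rangle$ automatically gives $\sum p_j = 1$, so the supremum in the theorem is over a compact convex polytope $\mathcal{P}(\theta/c)$ of probability vectors.

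Next I would apply Stirling's formula to obtain $\log \binom{n}{n_1,\dots,n_m} = n\,h(\nu/n) + O(\log n)$, uniformly in $\nu$. The number of feasible $\nu$ is polynomial in $n$, since the feasible set lies in a bounded polytope of fixed dimension scaled by $n$. Combining these two facts,
\begin{equation*}
\frac{\log \mathbf{m}(z)}{n} = \max_{\nu}\, h(\nu/n) + O\!\left(\frac{\log n}{n}\right).
\end{equation*}
Taking $z = k\theta \in \mathcal{J}$, so that $n = kc$, one gets $\log \mathbf{m}(k\theta)/k = c\, \max_\nu h(\nu/n) + o(1)$, and the theorem reduces to showing that this discrete maximum converges to $\sup_{p \in \mathcal{P}(\theta/c)} h(p)$. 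Convergence of the max follows from uniform continuity of $h$ on the simplex together with the density of rational points of denominator $n$ inside $\mathcal{P}(\theta/c)$, using that the defining equations of $\mathcal{P}$ are integral since $X_j \in \mathbb{Z}^s$.

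When $k\theta \notin \mathcal{J}$, the definition of $\mathbf{m}$ refers to a nearest point $z_k \in \mathcal{J}$; invoking Theorem~\ref{thm-MSC} and fixing any $g \in \mathcal{F}$, such a $z_k$ can be chosen at uniformly bounded distance from $k\theta$, so $z_k/k \to \theta$ and $z_k/\langle \eta, z_k\rangle \to \theta/c$, and the preceding analysis applies with $z_k$ in place of $k\theta$. I expect the main obstacle to be the Diophantine approximation in the penultimate step: as $z_k/\langle \eta, z_k\rangle \to \theta/c$, one must approximate a maximizer $p^*$ of $h$ on $\mathcal{P}(\theta/c)$ by rational points $\nu/N$ with $N = \langle \eta, z_k\rangle$ and $\nu \in \mathbb{N}^m$ lying in $\mathcal{P}(z_k/N)$, without losing an order-$1$ amount of entropy. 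This is handled by choosing $p^*$ in the relative interior of $\mathcal{P}(\theta/c)$ (so that $p^*$ has full support and small lattice perturbations keep $\nu$ nonnegative) and absorbing the resulting entropy loss into the $O(\log n)$ error term. Extra care is required when $\theta$ lies on the boundary of $\mathbf{C}_X$ and $\mathcal{P}(\theta/c)$ degenerates; in that case one argues on the lower-dimensional face that $\theta$ belongs to.
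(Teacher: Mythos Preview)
Your approach is correct and takes a genuinely more elementary route than the paper. Both start from the key observation that coplanarity forces every $u\in A(z)$ to satisfy $|u|=\langle\eta,z\rangle$, so $\mathbf{m}(z)$ is a sum of multinomial coefficients with fixed top index $n$. For the lower bound the paper, like you, rounds a probability vector $p$ and applies Stirling, but it does not insist that the rounded vector lie in $A(z_k)$: it lands at a nearby point and then invokes Theorem~\ref{Q(x)} (the polynomial comparison $\mathbf{m}(z)/\mathbf{m}(z')\le Q(\|z\|)$) to transfer back. For the upper bound the paper takes a different tack altogether: it chooses $p$ to be the Gibbs measure $p_j=e^{\langle t,X_j\rangle}/Z(t)$, observes that $\prod_j p_j^{u_j}$ depends only on $\sum_j u_jX_j$, and deduces $\mathbf{m}(x_n)\le\prod_j p_j^{-n_j}$ in one line, without ever counting the terms in $A(z)$. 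Your argument is thus more self-contained in the coplanar case---it bypasses Theorems~\ref{d_H} and~\ref{Q(x)} and even re-derives the existence of the limit $\gamma(\theta)$---while the paper's route has the advantage that the Gibbs machinery feeds directly into the partition-function formula of Theorem~\ref{thm-Z}, which is what drives the rigidity arguments later. One wording to tighten: ``density of rational points of denominator $n$ in $\mathcal{P}(\theta/c)$'' is not literally correct when $\theta/c$ is irrational; what you actually need (and essentially say afterwards) is a lattice point of $A(z_k)$ within $O(1)$ of $Np^*$, which follows because $\ker M\cap\mathbb{Z}^m$ has full rank in $\ker M$ and the entropy maximizer $p^*$ is strictly positive when $\theta$ lies in the interior of $\mathbf{C}_X$.
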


There is another expression involving the following function
     $\log \sum_{j=1}^m e^{\langle t, X_j\rangle},$
         $t \in \mathbb{R}^s,$
which corresponds to the pressure function in the statistic physics (Theorem \ref{thm-Z}).
The above formula (\ref{entropy}) resembles  the conditional variation principle in the analysis of multifractal
analysis (see \cite{FFW}, see also \cite{FF}, \cite{FLP2008}). Actually, the proof of Theorem \ref{thm-entropy} uses the idea of large deviation.
If $X_1,\dots, X_m$ are linearly independent, then the choice of $p$ is unique and
we can easily compute $\gamma(\theta)$.

Here is an example.
 Let $s=2$ and $X=\{(1,0),(0,1)\}$. Then
${\mathbf C}_X=(\R^+)^2, \ {\mathcal J}=\N^2,$  $\CL=\Z^2$ and $\eta=(1,1)$.
Clearly $\m(a,b)=\frac{(a+b)!}{a!b!}.$
For $\theta=(\theta_1,\theta_2)$ with $\theta_1^2+\theta_2^2=1$ and $\theta_1,\theta_2\geq 0$.
Let $p_1=\theta_1/(\theta_1+\theta_2)$, $p_2=\theta_2/(\theta_1+\theta_2)$.
This vector $(p_1, p_2)$ is the unique probability satisfying $p_1X_1 +p_2X_2 = \theta/\langle \eta, \theta\rangle$.   Then
$$
\gamma(\theta)=(\theta_1+\theta_2) h(p_1,p_2).
$$
The unit vector $(\theta_1, \theta_2)$ can be described by the angle $\alpha \in [0, \pi/2]$
such that $\theta_1 = \cos \alpha$. Then
$$
   \gamma(\alpha) = - \cos \alpha \log \frac{\cos \alpha}{\cos \alpha + \sin \alpha}
   - \sin \alpha \log \frac{\sin \alpha}{\cos \alpha + \sin \alpha}.
$$
The maximum is attained at $\pi/4$ and $\gamma(\pi/4)= \sqrt{2} \log 2$.
The formula of $\gamma(\theta)$ in this case can  be directly deduced from the Stirling formula.

Here is another example where
$X = \{(3, 0), (1, 2), (0, 3)\}$.
The graph of the growth function
$\gamma(\theta)$ is shown in Figure 2. 

\subsection{Rigidity results}
Given two sets of vectors
$X=\{X_1,\dots, X_m\}$ and
$Y=\{Y_1,\dots, Y_{m'}\}$ in $\mathbb{Z}^s$. Suppose that they define the same directional growth function, \textit{i.e.}
\begin{equation}\label{equ-XY}
{\mathbf C}_X={\mathbf C}_Y \text{ and } \gamma_X=\gamma_Y.
\end{equation}
 {\em What can we say about $X$ and $Y$}?  In our terminology, Rao, Ruan and Wang \cite{RRW12} proved the following rigidity result.

 \begin{pro} [\cite{RRW12}] \label{RRW}  Suppose    $X=\{X_1,\dots, X_s\}$ and
$Y=\{Y_1,\dots, Y_{s}\}$  are two sets of linearly independent vectors in ${\mathbb Z}^s$.
If they define the same directional growth function, then $X$ is a permutation of $Y$.
 \end{pro}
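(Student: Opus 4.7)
My plan is first to exploit the cone equality to align the two vector families up to scalars, and then to pin down those scalars using the explicit formula for $\gamma$ that is available under linear independence. For the alignment step: since $X_1,\ldots,X_s$ are linearly independent, $\mathbf{C}_X$ is a simplicial cone whose extreme rays are precisely $\R^+ X_1,\ldots,\R^+ X_s$; the same holds for $\mathbf{C}_Y$. The hypothesis $\mathbf{C}_X=\mathbf{C}_Y$ therefore forces a permutation $\sigma$ of $\{1,\dots,s\}$ and positive rationals $c_1,\ldots,c_s$ with $Y_{\sigma(i)}=c_i X_i$; the proposition will follow once every $c_i$ is shown to equal $1$.

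Next I would compute $\gamma_X$ explicitly. Linear independence makes each $z\in\mathcal J_X$ have a unique representation $z=\sum_i n_i X_i$, so two paths in $\Sigma_s^*$ yield the same visited point iff they share the same letter counts, giving
\[
\mathbf m(z) = \binom{n_1+\cdots+n_s}{n_1,\ldots,n_s}.
\]
Stirling's formula then produces, for a unit vector $\theta=\sum_i \lambda_i X_i$ in the interior of the cone,
\[
\gamma_X(\theta) = f(\lambda_1,\ldots,\lambda_s), \qquad f(\lambda) := S\log S - \sum_i \lambda_i\log\lambda_i, \quad S=\sum_i\lambda_i.
\]
This is also a direct consequence of Theorem \ref{thm-entropy}: linearly independent vectors are automatically $\eta$-coplanar for a unique $\eta$, and the constraint $\sum p_i X_i = \theta/\langle\theta,\eta\rangle$ admits the unique solution $p_i=\lambda_i/S$. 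Rewriting the same $\theta$ in the $Y$ basis gives $\mu_j=\lambda_j/c_j$, and hence $\gamma_Y(\theta)=f(\lambda_1/c_1,\ldots,\lambda_s/c_s)$.

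The remaining task is to extract $c_i=1$ from the functional equation
\[
f(\lambda_1,\ldots,\lambda_s) = f(\lambda_1/c_1,\ldots,\lambda_s/c_s) \qquad (\lambda_i>0).
\]
My plan here is a slice-and-asymptotics argument: for any pair $i\neq k$, send the other $\lambda_j\to 0^+$ (using $0\log 0=0$) to collapse the identity to the two-variable equation $g(c_i u, c_k v)=g(u,v)$, where $g(u,v)=(u+v)\log(u+v)-u\log u-v\log v$ and $u=\lambda_i/c_i$, $v=\lambda_k/c_k$. Differentiating in $u$ and writing $t=v/u$ turns this into
\[
c_i \log\bigl(1+(c_k/c_i)\, t\bigr) = \log(1+t) \qquad (t>0).
\]
The $t\to 0^+$ expansion yields $c_k t+O(t^2) = t+O(t^2)$, forcing $c_k=1$; the $t\to\infty$ asymptotics then require $c_i\log t + O(1) = \log t + O(1)$, hence $c_i=1$. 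Since the pair $(i,k)$ is arbitrary, every $c_i$ equals $1$, and $X$ is a permutation of $Y$.

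The main obstacle is this last step: converting the rigid-looking multivariable identity into pointwise information about the individual $c_i$'s. The slice-to-two-variables and derivative asymptotics trick is the key technical move. A minor nuance -- that the unit vectors $k\theta$ need not lie in $\mathcal J_X$ -- is already absorbed by the paper's nearest-neighbour extension of $\mathbf m$ and plays no role in the leading exponential asymptotics used above.
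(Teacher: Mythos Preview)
Your argument is correct. The simplicial-cone alignment, the multinomial computation of $\mathbf m$, the Stirling asymptotics, and the two-variable slice all go through; the key identity $c_i\log(1+(c_k/c_i)t)=\log(1+t)$ indeed forces $c_k=1$ from the $t\to 0$ behaviour and $c_i=1$ from $t\to\infty$. One small point you leave implicit: the equality $f(\lambda)=f(\lambda/c)$ is initially obtained only on the surface $\|\sum\lambda_iX_i\|=1$, but both sides are positively homogeneous of degree $1$, so it extends to all of $(\R^+)^s$ and your limit $\lambda_j\to 0^+$ is legitimate.

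The paper does not give a separate proof of this proposition; it cites \cite{RRW12} and then proves the more general coplanar rigidity (Theorem~\ref{thm-XY}), of which the linearly independent case is an instance. That proof proceeds quite differently: rather than matching extreme rays and chasing scalars, it works with the partition function $Z(t)=\sum_j e^{\langle t,X_j\rangle}$ and its Legendre transform $F(\beta)=\log Z(t)-\langle\beta,t\rangle$, shows (via $\partial F/\partial\beta_j=-t_j$) that $\gamma_X=\gamma_Y$ forces $Z(t)=\tilde Z(t)$ on an open set, and then reads off the permutation from the resulting polynomial identity $\sum z^{\widehat X_j}=\sum z^{\widehat Y_j}$. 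Your route is more elementary and self-contained for the linearly independent case---it avoids the Legendre-transform machinery entirely---but it leans essentially on the simplicial structure of $\mathbf C_X$ and on the uniqueness of the representation $z=\sum n_iX_i$, neither of which survives when $m>s$. The paper's partition-function argument, while heavier, is what carries the generalization to arbitrary $\eta$-coplanar families.
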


We will generalize the above result to  the coplanar case. Notice that $X_1, ..., X_m$
are coplanar if $m\le s$ and in particular, linearly independent vectors are coplanar.

\begin{theorem}\label{thm-XY}
 Suppose  $X=\{X_1,\dots, X_m\}$ and
$Y=\{Y_1,\dots, Y_{m'}\}$
are $\eta$-coplanar for some $\eta \in \mathbb{R}^s$  and that they define the same directional growth function. Then
 $m=m'$ and $X$ is a permutation of $Y$.
\end{theorem}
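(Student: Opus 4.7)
The plan is to show that the hypothesis $\gamma_X=\gamma_Y$ forces the partition functions $Z_X(t):=\log\sum_{j=1}^m e^{\langle t,X_j\rangle}$ and $Z_Y(t):=\log\sum_{k=1}^{m'} e^{\langle t,Y_k\rangle}$ to coincide on $\mathbb{R}^s$; the identification of the multisets $\{X_j\}$ and $\{Y_k\}$ then follows from linear independence of distinct exponentials. Since $\mathbf{C}_X=\mathbf{C}_Y$ and both vector families lie on the hyperplane $H_\eta:=\{v\in\mathbb{R}^s:\langle v,\eta\rangle=1\}$, their common convex hull is $K:=\mathbf{C}_X\cap H_\eta$. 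Setting $\theta=v\in K$ in Theorem~\ref{thm-entropy} and using $\langle v,\eta\rangle=1$ gives
\[
\gamma_X(v)=H_X(v):=\sup\Bigl\{h(p):\sum_{j=1}^m p_j X_j=v,\ p\text{ a probability vector}\Bigr\},
\]
and similarly for $Y$. Hence $H_X\equiv H_Y$ on $K$.

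The next step passes from $H_X$ to $Z_X$ via Legendre--Fenchel duality. A standard Lagrange-multiplier computation shows that the maximizer in the definition of $H_X(v)$, for $v$ in the relative interior of $K$, is the Gibbs probability $p_j\propto e^{\langle t,X_j\rangle}$, where $t\in\mathbb{R}^s$ is determined by $\nabla Z_X(t)=v$; at this optimum, $H_X(v)=Z_X(t)-\langle t,v\rangle=-Z_X^*(v)$, where $Z_X^*(v):=\sup_t(\langle t,v\rangle-Z_X(t))$ denotes the Legendre--Fenchel transform. The $\eta$-coplanarity yields the translation identity $Z_X(t+\lambda\eta)=Z_X(t)+\lambda$, which forces $Z_X^*\equiv+\infty$ off $H_\eta$; moreover $Z_X^*\equiv+\infty$ on $H_\eta\setminus K$ by hyperplane separation, since no convex combination of the $X_j$ realizes such a point. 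Thus $Z_X^*$ is globally determined by $H_X|_K$, and the identical reasoning applies to $Z_Y^*$, so $H_X=H_Y$ on $K$ upgrades to $Z_X^*=Z_Y^*$ on all of $\mathbb{R}^s$. Applying the Fenchel--Moreau theorem to the convex, closed, proper functions $Z_X$ and $Z_Y$ yields $Z_X=Z_X^{**}=Z_Y^{**}=Z_Y$.

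Finally, $Z_X\equiv Z_Y$ means $\sum_{j=1}^m e^{\langle t,X_j\rangle}=\sum_{k=1}^{m'} e^{\langle t,Y_k\rangle}$ for every $t\in\mathbb{R}^s$. Writing the difference as $\sum_{v} c_v e^{\langle t,v\rangle}=0$ over the finitely many distinct $v\in\mathbb{Z}^s$ appearing in $X\cup Y$, we invoke linear independence of the characters $t\mapsto e^{\langle t,v\rangle}$: restricting $t$ to a one-parameter family $t=\tau w$ with $w$ generic so that the scalars $\langle w,v\rangle$ are pairwise distinct reduces the matter to the classical fact that distinct real exponentials are linearly independent, forcing each $c_v=0$. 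Hence $\{X_j\}$ and $\{Y_k\}$ coincide as multisets, giving $m=m'$ and $Y$ a permutation of $X$. The main technical hurdle is the Legendre correspondence in this degenerate geometry, where $K$ lies in a codimension-one affine subspace; the extra ``$\eta$-direction'' of $Z_X$ is rigidly governed by the translation formula above, and this is precisely what restores the bijectivity of the correspondence $H_X\leftrightarrow Z_X$ despite the apparent loss of one dimension.
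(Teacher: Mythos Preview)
Your proof is correct, and it takes a genuinely different route from the paper's main argument, though one the authors themselves acknowledge in passing.

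The paper's proof first applies an integral linear change of coordinates so that $\eta_s\neq 0$, then selects for each $\beta$ the unique solution $t$ of $\nabla Z(t)/Z(t)=\beta$ with $t_s=0$, and proves by direct computation (Lemma~\ref{lem-key}) that $t_j=-\partial F/\partial\beta_j$ where $F(\beta)=\log Z(t)-\langle\beta,t\rangle$. Since $\gamma_X=\gamma_Y$ forces $F=\tilde F$, this yields $t=\tilde t$ and hence $Z(t)=\tilde Z(t)$ on an open set of parameters. Your argument bypasses the coordinate reduction and the derivative computation entirely: you recognize $H_X=-Z_X^*$ as a Legendre--Fenchel pair, show that the conjugate $Z_X^*$ is forced to $+\infty$ off $K$ by the translation identity $Z_X(t+\lambda\eta)=Z_X(t)+\lambda$ and by separation, and then invoke Fenchel--Moreau. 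The paper in fact notes exactly this alternative in the Remark following its proof, without developing it.

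What each approach buys: the paper's computation is elementary and self-contained, needing no convex-analytic machinery beyond calculus, at the cost of the somewhat artificial normalization $t_s=0$. Your approach is coordinate-free and conceptually cleaner, but leans on Fenchel--Moreau and implicitly on the fact that a closed proper convex function is determined on its domain by its values on the relative interior (so that $Z_X^*=Z_Y^*$ on $\operatorname{relint}(K)$ and off $K$ suffices). You might want to make that last point explicit, since as written you only establish $Z_X^*=Z_Y^*$ on $\operatorname{relint}(K)$ and on the complement of $K$, leaving the relative boundary of $K$ tacit; it follows because Legendre transforms are automatically closed convex, hence continuous along segments from the relative interior. The final step via linear independence of exponentials is the same in both proofs.
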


As we shall see, the proof Theorem \ref{thm-XY} is much more difficult than that of Proposition \ref{RRW}. We can still consider
two coplanar sets of vectors which are respectively located on two different hyper-planes.

 Let $X^{(p)}=\left ( \kappa ({\mathbf i})\right )_{{\mathbf i}\in \{1,\dots,m\}^p}$, which is called the $p$-th \emph{iteration} of $X$, where
$\kappa$ is defined in \eqref{kappa}.
For example, the second iteration of $X=\{(1,0),(0,1)\}$ is $\{(2,0),(1,1),(1,1),(0,2)\}$.
Using techniques of algebraic plane curve, we prove that

\begin{theorem}\label{thm-XY-new}
 Suppose  $X=\{X_1,\dots, X_m\}$ is $\eta$-coplanar and
$Y=\{Y_1,\dots, Y_{m'}\}$ is $\eta'$-coplanar, and suppose $X$ and $Y$
  define the same directional growth function. Then
 $\eta=c\eta'$ for some $c>0$  and there exists two integers $n,n'\geq 1$ such that
 the $n$-th iteration of $X$ is a permutation of the $n'$-th iteration of $Y$.
\end{theorem}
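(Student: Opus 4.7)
The approach is to reduce to Theorem \ref{thm-XY} by identifying a common hyperplane for suitable iterations of $X$ and $Y$; the main task is to show $\eta\parallel\eta'$.

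First, since $\gamma$ is strictly positive only on its cone, $\gamma_X=\gamma_Y$ immediately forces $\mathbf{C}_X=\mathbf{C}_Y$. I would then apply Theorem \ref{thm-entropy} combined with Gibbs--Legendre duality between entropy and the log-partition function to represent the positively homogeneous extension $\tilde{\gamma}_X(x):=\|x\|\,\gamma_X(x/\|x\|)$ as $\tilde{\gamma}_X(x)=-\sup\{\langle t,x\rangle:t\in V_X\}$ for $x\in\mathbf{C}_X$, where $F_X(t)=\sum_j e^{\langle t,X_j\rangle}$, $P_X:=\log F_X$, and $V_X:=\{t\in\R^s:P_X(t)=0\}$. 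Since $P_X$ is convex, $K_X:=\{P_X\le 0\}$ is a closed convex set with boundary $V_X$, bounded in every direction of $\mathbf{C}_X$ and unbounded in the polar direction; its support function is finite precisely on $\mathbf{C}_X$ and coincides there with $-\tilde{\gamma}_X$. Hence $\tilde{\gamma}_X=\tilde{\gamma}_Y$ forces $K_X=K_Y$ and therefore $V_X=V_Y$.

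Second, I would extract $\eta\parallel\eta'$ from the shared hypersurface. At each regular $t\in V_X$, the gradient $\nabla P_X(t)=\sum_j(e^{\langle t,X_j\rangle}/F_X(t))\,X_j$ is simultaneously a normal to $V_X$ and an interior point of $\mathrm{conv}\{X_1,\dots,X_m\}\subset H_\eta:=\{\langle\eta,\cdot\rangle=1\}$; standard Legendre theory shows $\nabla P_X$ restricts to a bijection $V_X\to\mathrm{conv}\{X_j\}^\circ$. Likewise $\nabla P_Y|_{V_Y}=\nabla P_Y|_{V_X}$ is a bijection onto $\mathrm{conv}\{Y_k\}^\circ\subset H_{\eta'}$. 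Since both gradients are normals to the same hypersurface at each point, they are positively parallel, so the central projection $\pi(b):=b/\langle\eta,b\rangle$ from the origin carries $\mathrm{conv}\{Y_k\}^\circ\subset H_{\eta'}$ bijectively onto $\mathrm{conv}\{X_j\}^\circ\subset H_\eta$. A direct computation shows $\pi$ is affine on a full-dimensional subset of $H_{\eta'}$ only when $\langle\eta,\cdot\rangle$ is constant on $H_{\eta'}$, which is equivalent to $\eta\in\R\eta'$. Since the integer vectors in $X$ and $Y$ each span $\R^s$, both $\eta$ and $\eta'$ are rational, so $\eta=c\eta'$ with $c\in\mathbb{Q}_{>0}$; write $c=n/n'$ in lowest terms.

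Finally, with $\eta/n=\eta'/n'$ the iterations $X^{(n)}$ and $Y^{(n')}$ are both coplanar on the single hyperplane $\{\langle\eta/n,\cdot\rangle=1\}$. The product rule $F_{X^{(k)}}=F_X^k$ yields $V_{X^{(k)}}=V_X$ and hence $\gamma_{X^{(k)}}=\gamma_X$ for all $k\ge 1$ via the Legendre representation above; in particular $\gamma_{X^{(n)}}=\gamma_X=\gamma_Y=\gamma_{Y^{(n')}}$. Applying the same-hyperplane rigidity of Theorem \ref{thm-XY} to the pair $X^{(n)},Y^{(n')}$ concludes that $X^{(n)}$ is a permutation of $Y^{(n')}$. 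The main expected obstacle is the rigidity step forcing $\eta\parallel\eta'$: specifically, checking rigorously that the Gibbs parametrisations produce full-dimensional images in their respective hyperplanes and that the central projection of a full-dimensional polytope from the origin to another hyperplane is affine only when the two hyperplanes are parallel.
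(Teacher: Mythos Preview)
Your reduction to the equality $K_X=K_Y$ of the sublevel sets (equivalently $V_X=V_Y$) via Legendre duality is correct and corresponds to the paper's use of ``standard solutions'' (solutions $t$ with $Z(t)=1$). The final step, passing to iterations and invoking Theorem~\ref{thm-XY} once $\eta\parallel\eta'$ and $c\in\mathbb{Q}$ are known, is also fine and matches the paper.

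The gap is in the step forcing $\eta\parallel\eta'$. You correctly observe that for each $t\in V_X=V_Y$ the gradients $\nabla P_X(t)\in H_\eta$ and $\nabla P_Y(t)\in H_{\eta'}$ are positive multiples of one another, so the central projection $\pi:H_{\eta'}\to H_\eta$ carries $\mathrm{conv}\{Y_k\}^\circ$ onto $\mathrm{conv}\{X_j\}^\circ$. But this is automatic: \emph{any} central projection between two hyperplanes not through the origin is a projective transformation, and projective maps send open convex polytopes to open convex polytopes. You have only established that $\pi$ is a bijection between two such polytopes, not that $\pi$ is affine, so the (true) implication ``$\pi$ affine $\Rightarrow H_\eta\parallel H_{\eta'}$'' cannot be invoked. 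Your stated ``expected obstacle'' misidentifies the difficulty: the problem is not verifying that affinity of $\pi$ forces parallelism, but that nothing in your argument shows $\pi$ is affine in the first place. Knowing only the common hypersurface $V_X=V_Y$ and its Gauss map recovers no more than $\mathbf{C}_X=\mathbf{C}_Y$.

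The paper closes this by exploiting the \emph{algebraic} structure that your convex-geometric argument discards. After restricting to a two-dimensional face of the cone, the equalities $\sum_j z^{X_j}=1$ and $\sum_k z^{Y_k}=1$ (with $z=e^{t}$) become polynomial equations in two variables with infinitely many common real solutions; hence $P-1$ and $Q-1$ share a nontrivial common factor $S$ in $\mathbb{R}[x,y]$. An analysis of the principal (top-degree homogeneous) part of $S$ then forces $Q$ to possess two monomials of equal total degree, which pins down $\eta'$ as a multiple of $\eta$. The parallelism therefore comes from the integrality of the exponents and unique factorisation in $\mathbb{R}[x,y]$, not from the shape of $K_X$ alone.
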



\subsection{Relation to the Lipschitz equivalence of Cantor sets}
Let $ \boldsymbol{\rho}=(\rho_{1},\cdot\cdot\cdot,\rho_{m})$ be a vector such that
$ \rho_j\in (0,1)$ for all $j=1, 2, \cdots, m$. An example is the  set of contraction ratios in a contractive self-similar iterated function system. Let $\langle\brho\rangle$ (resp.  $\langle\brho\rangle_+$) denote the subgroup (resp. semi-group) of
 $( \mathbb{R}^{+} ,\times)~$ generated by $ \rho_1,\dots,\rho_m$. Such semi-groups  $\langle\brho\rangle_+$ play a crucial role
in the discussion of the Lipschitz equivalence  of self-similar Cantor sets (\cite{FaMa92}).

 A \emph{pseudo-basis} of $\langle \boldsymbol{\rho}\rangle$ is a set of numbers
 $\boldsymbol{\lambda}=(\lambda_{1},\cdots,\lambda_{s})$ such that
 $\langle \boldsymbol{\rho}\rangle \subset \langle  \boldsymbol{\lambda} \rangle$
 and $s$ is the rank of $\langle \boldsymbol{\rho}\rangle$, i.e. the cardinality of a basis of
 $\langle \boldsymbol{\rho}\rangle$.
 The multiplicative group $\langle \blambda\rangle$ is isomorphic to the additive group $(\mathbb{Z}^s,+)$ and an isomorphism is defined by $\text{exp}_{\blambda}: \mathbb{Z}^s\rightarrow \langle \blambda\rangle$ where
\begin{equation*}
\forall X=(X^1,\dots,X^s)\in \mathbb{Z}^s, \qquad X \mapsto
\boldsymbol{\lambda}^X:=\prod_{i=1}^s{\lambda_i}^{X^i}.
\end{equation*}
The inverse map $\log_{\boldsymbol{\lambda}}: \langle \blambda\rangle\rightarrow \mathbb{Z}^s$
of the isomorphism is then defined  by
$
 \log_{\boldsymbol{\lambda}}x=X
$ for $x=\boldsymbol{\lambda}^X \in \langle \blambda\rangle$. Let
\begin{equation*}
X_i=\log_{\boldsymbol{\lambda}}\rho_i, \quad (i=1, \dots, m).
\end{equation*}
Then the multiplicative semi-group  $\langle\brho\rangle_+$ is isomorphic to the additive semi-group
$\mathcal{J}(\brho) = X_1 \mathbb{N} + \cdots + X_m \mathbb{N}$.

Given two Cantor sets generated by self-similar iterated function systems. We fix
a common pseudo-basis $\boldsymbol{\lambda}$ for both sets of contractions, denoted
$\boldsymbol{\rho}$ and $\boldsymbol{\rho'}$.
Such a pseudo-basis does exist when the two Cantor sets are Lipschitz equivalent (\cite{FaMa92}).
 Under the assumption
that both sets of vectors $\log_{\boldsymbol{\lambda}} \boldsymbol{\rho}$
and $\log_{\boldsymbol{\lambda}} \boldsymbol{\rho'}$ are coplanar,
it will be proved that two such Cantor sets are Lipschitz equivalent if and only if
the situation described in the rigidity Theorem \ref{thm-XY-new} takes place
 (see \cite{RZ14}).

\section{\textbf{Maximal saturated cones}}

We assume that $X_1,\dots, X_m\in \Z^s$ locate on a same half-plane, that is, there exists
a vector $\boldsymbol{\alpha} \in \mathbb{R}^s$ such that $\langle X_j, \boldsymbol{ \alpha} \rangle>0$ for
$j=1,\dots,m$.
%
Recall that  ${\mathbf C}_X=X_1\R^+\cdots+X_m\R^+$ and
  ${\cal L}=X_1\Z+\cdots+X_m\Z$  are respectively the cone and  the lattice generated by $X_1,\dots, X_m$.

 Remark that we can work in a little more general setting.  Let $E$
  be a Euclidean space and $L$ be a lattice of full rank in $E$.  Given $m$ non-zero points $X_1, ..., X_m$
   of the lattice, we can consider the generated semi-group $\mathbb{N} X_1 + \cdots + \mathbb{N}X_m$.  In other word, there is no need to work with
   the orthogonal lattice $\mathbb{Z}^d$. All the results we will present remain true in this setting.

\begin{theorem}\label{thm-2.1}
There exists $g\in {\mathcal J}$ such that $g+{\mathbf C}_X$ is a  saturated cone.
\end{theorem}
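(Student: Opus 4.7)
The plan is to exhibit a concrete $g \in \mathcal{J}$ of the form $g = N(X_1 + \cdots + X_m)$ for a suitably large integer $N$, and to check that every lattice point of the translated cone $g + \mathbf{C}_X$ is already a nonnegative integer combination of $X_1,\dots,X_m$. The mechanism is straightforward: given a lattice point $v \in \mathbf{C}_X$, I will split it into an \emph{integer part} that lies in $\mathcal{J}$ automatically and a bounded \emph{fractional remainder} that still lies in $\mathcal{L}$; the latter ranges over a finite set, and can be pushed into $\mathcal{J}$ by adding a uniform multiple of $X_1 + \cdots + X_m$.

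Concretely, I would consider the compact parallelepiped $\Pi := \{\sum_{j=1}^m t_j X_j : 0 \le t_j \le 1\}$. Since $\mathcal{L}$ is a discrete subgroup of $\mathbb{R}^s$, the intersection $\Pi \cap \mathcal{L}$ is finite. For each $z \in \Pi \cap \mathcal{L}$, pick any integer representation $z = \sum_j c_j(z)\, X_j$ (possible because $z \in \mathcal{L}$); letting $N_z := \max_j \max(-c_j(z), 0)$, the element $z + N_z(X_1 + \cdots + X_m) = \sum_j (c_j(z) + N_z) X_j$ has nonnegative integer coefficients and therefore belongs to $\mathcal{J}$. Setting $N := \max_{z \in \Pi \cap \mathcal{L}} N_z$ and $g := N(X_1 + \cdots + X_m) \in \mathcal{J}$, I claim that $g + \mathbf{C}_X$ is saturated.

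To prove the claim, let $w = g + v$ be a lattice point with $v \in \mathbf{C}_X$. By the definition of $\mathbf{C}_X$, write $v = \sum_j t_j X_j$ with real $t_j \ge 0$; setting $\alpha_j := t_j - \lfloor t_j \rfloor \in [0,1)$, define
\[
v_1 := \sum_j \lfloor t_j \rfloor X_j, \qquad v_2 := \sum_j \alpha_j X_j.
\]
Then $v_1 \in \mathcal{J}$ and $v_2 \in \Pi$; moreover $v_2 = v - v_1 \in \mathcal{L}$, so $v_2 \in \Pi \cap \mathcal{L}$ and the construction of $N$ yields $v_2 + N(X_1 + \cdots + X_m) \in \mathcal{J}$. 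Adding everything,
\[
w = g + v = v_1 + \bigl(v_2 + N(X_1 + \cdots + X_m)\bigr) \in \mathcal{J} + \mathcal{J} \subseteq \mathcal{J},
\]
which establishes saturation. The only non-routine input is the finiteness of $\Pi \cap \mathcal{L}$, which is immediate from compactness of $\Pi$ and discreteness of the lattice; the rest reduces to the elementary splitting $t_j = \lfloor t_j \rfloor + \alpha_j$ and the semigroup law. I do not anticipate a serious obstacle; the main thing to get right is that $v_2$ genuinely lies in $\mathcal{L}$ (not merely in $\Pi$), which is where the nonuniqueness of the real representation $v = \sum_j t_j X_j$ is harmlessly absorbed.
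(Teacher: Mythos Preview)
Your proof is correct and essentially identical to the paper's: both take $g = N(X_1+\cdots+X_m)$, split an arbitrary $v\in\mathbf{C}_X$ into an integer-coefficient part in $\mathcal{J}$ plus a remainder in the bounded parallelepiped $\{\sum_j t_jX_j:0\le t_j<1\}$, observe that the remainder lies in the finite set $\Pi\cap\mathcal{L}$, and choose $N$ large enough to absorb every such remainder into $\mathcal{J}$. The only cosmetic difference is that you define $N$ via $\max_j\max(-c_j(z),0)$ whereas the paper uses a uniform bound $|a_j|\le M$, but the mechanism is the same.
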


\begin{proof}
Set
$\Omega =\left \{\sum_{j=1}^m c_j X_j;~c_j\in [0,1)\right \}$, considered as basic domain. Then
every
$x=c_1X_1+\cdots+c_mX_m \in {\mathbf C}_X$ can be written as
\begin{equation}\label{eq-dense}
x=y+\omega
\end{equation}
where $y=\lfloor c_1 \rfloor X_1+\cdots+\lfloor c_m \rfloor X_m$ is in the semi-group ${\mathcal J}$ and $\omega\in \Omega$.
Let
$$\Omega^*=\Omega\cap \CL.$$
Since $\Omega$ is bounded,  $\Omega^*$ is a finite set.
Then there exists an integer $M$ such that for
every $\omega\in \Omega^*$, there exists $a_1,\dots,a_m \in \Z$ such that
$$
\omega=\sum_{j=1}^m a_jX_j, \qquad |a_j|\le M.
$$
Put $g=M(X_1+\cdots+X_m).$ We claim that the cone $g+{\mathbf C}_X$ is saturated.

In fact,
  let $z\in (g+{\mathbf C}_X)\cap {\mathcal L}$. Since $z-g\in {\mathbf C}_X$, as we have just seen, we can write
$$z-g=x+\omega, \qquad  \mbox{\rm with}\ x\in \CJ, \omega\in \Omega.$$
Since $z$, $g$ and $x$ all belong to $\CL$,
so does $\omega$. Hence $\omega \in \Omega^*$.
By the definition of $g$, it is clear that $g+\omega\in \CJ$. Thus
 $z=x+(g+\omega)\in \CJ$.
\end{proof}




\medskip

\noindent \textbf{Proof of Theorem \ref{thm-MSC}.}
The existence of saturated cones is confirmed by Theorem \ref{thm-2.1}.
For a given saturated cone, there is at most a finite number of saturated cones which contain the given one. This finiteness implies the existence of maximum saturated cone.

In the following, we show that the number of maximal saturated cones is finite by contradiction.
Suppose on the contrary that the Frobenius set ${\mathcal F}$ is infinite.
For any $g\in {\mathcal F}$, choose a path $\omega$ such that $g=\kappa (\omega)$ and set
 $u(g)=(u_1,\dots,u_m)$ where $u_j$ counts the number of the symbol $j$ in $\omega$.
(We remark that the choice of $\omega$ is not unique.)

By the definition of maximal saturated cone, for any $g_1,g_2\in {\mathcal F}$,
both $g_1-g_2$ and $g_2-g_1$ do not belong to $\CJ$. Hence,
$u(g_1)$ and $u(g_2)$ are not comparable, \textit{i.e.}, both $u(g_1)-u(g_2)$ and $u(g_2)-u(g_1)$
are not non-negative vectors. However, since the set
$\{u(g);~g\in {\mathcal F}\}\subset \N^m$ is infinite, there must exist two comparable elements.
This contradiction proves the theorem.
$\Box$

\medskip

As a direct consequence of \eqref{eq-dense}, we have 

\begin{lemma}\label{dense}
The  set ${\mathcal J}$ is \emph{relatively dense} in   ${\mathbf C}_X$,  that is, there exists a constant $R_0>0$ such that
$d(x, {\mathcal J})<R_0$ for every $x\in {\mathbf C}_X.$
\end{lemma}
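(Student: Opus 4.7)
The plan is to read off the conclusion directly from the decomposition \eqref{eq-dense} established in the proof of Theorem \ref{thm-2.1}. Recall that the basic domain $\Omega = \{\sum_{j=1}^{m} c_j X_j : c_j \in [0,1)\}$ is the continuous image of the bounded set $[0,1)^m$ under a linear map, hence bounded in Euclidean norm. So I would set
\[
R_0 := \sup_{\omega \in \Omega}\|\omega\| + 1 \le \sum_{j=1}^m \|X_j\| + 1 < \infty.
\]

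Now, given any $x \in \mathbf{C}_X$, write $x = c_1 X_1 + \cdots + c_m X_m$ with $c_j \ge 0$. Applying \eqref{eq-dense}, I obtain $x = y + \omega$ with $y = \lfloor c_1\rfloor X_1 + \cdots + \lfloor c_m\rfloor X_m \in \mathcal{J}$ and $\omega \in \Omega$. Hence
\[
d(x,\mathcal{J}) \le \|x - y\| = \|\omega\| < R_0,
\]
which proves the claim.

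There is no real obstacle here: the lemma is a direct rewording of \eqref{eq-dense}, the only substantive input being that the fundamental parallelepiped $\Omega$ spanned by $X_1,\ldots,X_m$ is bounded, which is automatic since it is a bounded linear image of the unit cube. The statement does not even require the hypothesis that the $X_j$ lie in a common half-space, only that $\mathbf{C}_X$ is the non-negative cone they generate.
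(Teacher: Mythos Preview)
Your proof is correct and follows exactly the paper's approach: the paper states the lemma as ``a direct consequence of \eqref{eq-dense}'' without further argument, and you have simply spelled out that consequence by bounding the fundamental parallelepiped $\Omega$. Nothing is missing.
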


\section{\textbf{Variation of multiplicity function}}
We are now going to prove that the multiplicity $\m(z)$
has an exponential increasing rate as $z$ tends to the infinity along each direction in the cone
${\mathbf C}_X$.

First of all, we give another expression for the multiplicity function $\mathbf{m}.$
For $z\in \CJ,$  define
\begin{equation}
A(z):= \left \{ (u_{1},\dots, u_{m})\in \mathbb{ N}^{m};~\sum_{j=1}^{m}u_{j}X_{j}=z \right \}.
\end{equation}
The cardinality  $\#A(z)$ is the number of ways that $z$ can be represented as linear combination of $X_1, \cdots, X_m$ with non-negative integer coefficients.
In one dimensional case, the function
$z \mapsto \#A(z)$
 is the \emph{denumerant function} introduced by Sylvester \cite{Syl}.
 We have the following expression for $\m(z)$:
\begin{equation}\label{m_z}
\mathbf{m}(z)=\underset{u\in A(z)}{\sum}\frac{ |u|! }{u!}
\end{equation}
 where
$|u|=u_1+\cdots+u_m$ and $ u!= u_1!\cdots u_m!$
for a multi-index  $u= (u_{1},\dots, u_{m} )\in \mathbb{N}^m $.

Assume $X=\{(1,0),(0,1)\}$. Then $\CJ=\N^2$ and for $(a,b)\in \N^2$, we have
$$
\m(a,b)=\frac{(a+b)!}{a!b!}.
$$
Hence if the distance of two points $(a,b)$ and $(a',b')$ are bounded by a constant, we see that $\m(a,b)/\m(a',b')$
is controlled by a polynomial of $\sqrt{a^2 +b^2}$. As we shall show in Theorem \ref{Q(x)},  that is the case in general.



For $ z=(z^1,\dots,z^s)\in \mathbb{ R}^{s}$,    define
$\|z\|_\infty:=\max \{ |z^1|,\dots,  |z^s|\}.$
 Let $d_H$ denote the Hausdorff metric, that is, if $A$ and $B$ are two subsets of $\R^s$,
 then
 $$
 d_H(A,B)=\max\left \{\sup_{a\in A}d(a,B), \sup_{b\in B}d(b,A)\right \}.
 $$

 Recall that $\langle X_j, \boldsymbol{\alpha} \rangle>0$ for all $j\in \{1,\dots, m\}$.  Set
\begin{equation}\label{delta}
 \delta=\underset{1\leq j \leq m}{\min}\frac{\langle X_j,\boldsymbol{\alpha} \rangle}{\|\boldsymbol{\alpha}\|}.
\end{equation}

\begin{lemma}\label{lem-3.1} The set $A(z)$ is contained in a $\|\cdot\|_\infty$-ball of radius
 $\|z\|/\delta$. In other word,
$\|u\|_\infty\leq \|z\|/\delta$ for all $u\in A(z)$.
\end{lemma}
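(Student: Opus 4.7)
The plan is to project the defining relation $\sum_{j=1}^m u_j X_j = z$ onto the direction $\boldsymbol{\alpha}$, which is the very witness that $X_1,\dots,X_m$ lie in a common half-space; the quantity $\delta$ is designed precisely to quantify the minimum positive length of this projection, so the bound should fall out almost immediately.

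More concretely, I would first fix $u=(u_1,\dots,u_m)\in A(z)$ and take the inner product of both sides of $\sum_j u_j X_j = z$ with $\boldsymbol{\alpha}$, obtaining
\begin{equation*}
\sum_{j=1}^m u_j \,\langle X_j,\boldsymbol{\alpha}\rangle \;=\; \langle z,\boldsymbol{\alpha}\rangle.
\end{equation*}
The right-hand side is bounded above by $\|z\|\,\|\boldsymbol{\alpha}\|$ via Cauchy--Schwarz. For the left-hand side, the definition \eqref{delta} of $\delta$ gives $\langle X_j,\boldsymbol{\alpha}\rangle \ge \delta\|\boldsymbol{\alpha}\|$ for every $j$, and since each coordinate $u_j$ is a non-negative integer, summing yields
\begin{equation*}
\delta\|\boldsymbol{\alpha}\|\sum_{j=1}^m u_j \;\le\; \sum_{j=1}^m u_j \,\langle X_j,\boldsymbol{\alpha}\rangle \;\le\; \|z\|\,\|\boldsymbol{\alpha}\|.
\end{equation*}
Dividing by $\delta\|\boldsymbol{\alpha}\|>0$ (which is legitimate because $\boldsymbol{\alpha}\neq 0$ by the half-space hypothesis) gives $\sum_j u_j \le \|z\|/\delta$. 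Since every $u_j\ge 0$, each coordinate is bounded by this sum, so $\|u\|_\infty \le \sum_j u_j \le \|z\|/\delta$, which is the required estimate.

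There is no serious obstacle here: the lemma is essentially a one-line consequence of the half-space assumption combined with the normalization in the definition of $\delta$. The only points worth double-checking are that the non-negativity of the $u_j$ is used in two places (once to estimate the sum from below by $\delta\|\boldsymbol{\alpha}\|\sum u_j$ and once to bound $\|u\|_\infty$ by $\sum u_j$), and that $\boldsymbol{\alpha}\neq 0$, which is automatic from $\langle X_1,\boldsymbol{\alpha}\rangle>0$.
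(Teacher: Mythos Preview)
Your proof is correct and is essentially identical to the paper's own argument: both project the relation $\sum_j u_j X_j = z$ onto $\boldsymbol{\alpha}$, invoke Cauchy--Schwarz for the upper bound $\langle z,\boldsymbol{\alpha}\rangle\le \|z\|\,\|\boldsymbol{\alpha}\|$, and use the definition of $\delta$ together with $u_j\ge 0$ for the lower bound. The only cosmetic difference is that you pass through the intermediate inequality $\sum_j u_j \le \|z\|/\delta$ before bounding $\|u\|_\infty$, whereas the paper writes the chain $\|z\|\ge \delta\|u\|_\infty$ in one line.
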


\begin{proof} Let $u \in A(z)$ so that $z = \sum_{j=1}^m u_jX_j$. Using Schwartz inequality,
 we obtain
$$\displaystyle  \|z\| \geq \frac{\langle z,\boldsymbol{\alpha}\rangle}{\|\boldsymbol{\alpha}\|}
 = \sum_{j=1}^m u_j  \frac{\langle X_j,\boldsymbol{\alpha} \rangle}{\|\boldsymbol{\alpha}\|}
 \geq \delta\|u\|_{\infty}.$$
 The lemma follows.
\end{proof}

The following theorem plays a crucial  r\^ole in our argument.

\begin{thm}\label{d_H}
Let $C_0\ge 1$ be an integer. Then there exists an integer $M>0$ such that
$$
 d_H(A(z), A(z'))<M,
$$
provided that  $z,z'\in {\mathcal J}$ and $\|z-z'\| \le C_0$.
\end{thm}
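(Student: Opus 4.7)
The plan is to first reduce to a finite set of possible differences, then use a lift-and-repair strategy involving the syzygy lattice of the $X_j$'s.

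Since $d := z - z'$ lies in $\CL$ with $\|d\| \le C_0$, it belongs to the finite set $\mathcal{D} := \CL \cap \overline{B(0, C_0)}$. It therefore suffices to bound $d_H(A(z), A(z - d))$ uniformly in $z$ for each fixed $d \in \mathcal{D}$. Fix such a $d$. By Theorem \ref{thm-2.1}, choose a saturated cone $g + {\mathbf C}_X$, and take $h := g + N(X_1 + \cdots + X_m)$ with $N$ large enough (depending on $d$) that $h$ and $h - d$ both lie in $g + {\mathbf C}_X$, hence in $\CJ$. Write $h = \sum_j b_j X_j$ and $h - d = \sum_j c_j X_j$ with $b, c \in \N^m$, and set $w := b - c \in \Z^m$; then $\sum_j w_j X_j = d$ and $\|w\|_\infty \le K$ for some constant $K$ depending only on $d$ and $X$.

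Given $u \in A(z)$, consider the candidate $\tilde v := u - w = u - b + c \in \Z^m$. Since $\sum_j \tilde v_j X_j = z - d = z'$ and $\tilde v_j \ge u_j - b_j \ge u_j - K$, whenever $u_j \ge K$ for all $j$ we have $\tilde v \in \N^m$, so $\tilde v \in A(z')$ with $\|u - \tilde v\|_\infty \le K$. The analogous argument with the roles of $z$ and $z'$ swapped (using $-d \in \mathcal{D}$ in place of $d$) handles the reverse inclusion, so the generic case is complete.

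The delicate case is when some coordinates $u_j$ are smaller than $K$. Then $\tilde v$ may have negative entries, but its negative part remains uniformly controlled: $\tilde v^-_j \le b_j \le K$. The plan is to repair $\tilde v$ by adding an element $r$ of the syzygy lattice $\Lambda := \{\lambda \in \Z^m : \sum_j \lambda_j X_j = 0\}$ so that $\tilde v + r \in \N^m$ and thus $\tilde v + r \in A(z')$, with $\|r\|_\infty$ bounded uniformly in $u$ and $z$. Existence of some such $r$ is automatic, since $A(z') \ne \emptyset$ guarantees the feasibility set $\{r \in \Lambda : \tilde v + r \ge 0\}$ is non-empty. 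The main obstacle is the uniform bound on $\|r\|_\infty$. I would exploit that the \emph{defect profile} of $\tilde v$---the pair $(J, \tilde v|_J)$ with $J := \{j : \tilde v_j < 0\}$---lies in a finite set parameterized by a subset $J \subseteq \{1, \ldots, m\}$ and an integer vector in $\{-K, \ldots, -1\}^J$. For each such profile, the coordinates where $\tilde v$ is large and positive impose only weak constraints on $r$, automatically satisfied by any $r$ of appropriately bounded norm, so the repair reduces to a finite lattice-feasibility problem whose minimum-norm solution is bounded by a quantity depending only on the profile and on $\Lambda$. Taking the maximum bound over the finitely many feasible profiles, and adding $K$, yields the desired constant $M$.
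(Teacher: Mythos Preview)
Your reduction to finitely many $d \in \mathcal{D}$ and the construction of a fixed $w$ with $\sum_j w_j X_j = d$ are fine, and the observation that the negative part of $\tilde v = u - w$ is uniformly bounded is correct. The gap is in the repair step. You propose to control the minimum-norm syzygy $r$ via the defect profile $(J,\tilde v|_J)$ with $J=\{j:\tilde v_j<0\}$, asserting that the remaining constraints $r_j\ge -\tilde v_j$ for $j\notin J$ are ``automatically satisfied by any $r$ of appropriately bounded norm'' because $\tilde v_j$ is ``large and positive'' there. But coordinates $j\notin J$ may have $\tilde v_j=0$ (or any small non-negative value), and then $r_j\ge 0$ is a genuine restriction not encoded by the profile. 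Since the $X_j$ lie in an open half-space, $\Lambda\cap\N^m=\{0\}$, so no nonzero $r\in\Lambda$ has all entries non-negative; some $r_j$ must be negative, and \emph{which} ones may be negative depends on all of $\tilde v$, not just on $(J,\tilde v|_J)$. The circularity is explicit: the threshold separating ``large'' from ``small'' coordinates must exceed $\|r\|_\infty$, but $\|r\|_\infty$ is precisely what you are trying to bound, and it depends on which coordinates fall below the threshold. In fact your repair statement --- ``for every $\tilde v\ge -K\mathbf 1$ with $A(\sum_j\tilde v_jX_j)\ne\emptyset$ there is $v\in A(\sum_j\tilde v_jX_j)$ within bounded distance of $\tilde v$'' --- is equivalent to the theorem itself: shifting $\tilde v$ by $K\mathbf 1$ reduces it to the original statement with $z'=\sum_j\tilde v_jX_j$ and $z=z'+K\sum_jX_j$. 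So the repair step does not reduce the difficulty; it restates it.

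The paper's proof avoids the syzygy lattice entirely. It fixes a saturated cone $g+{\mathbf C}_X$ and, given $u\in A(z)$, walks along $u$ until first entering a translated saturated cone $g+x_0+{\mathbf C}_X$; the truncation $u^*\preceq u$ at that moment has image $z-z^*$ of bounded norm, and saturation forces $z'-z^*\in\CJ$, so any $v'\in A(z'-z^*)$ yields $v=u^*+v'\in A(z')$ close to $u$ (via Lemma~\ref{lem-3.1}). When $z$ lies too close to a boundary face of ${\mathbf C}_X$ for this to apply, the argument peels off the bounded transverse part of $u$ and inducts on the dimension of the face. This geometric induction is what replaces the uniform lattice-feasibility bound you would need.
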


\begin{proof} For $u,v\in \mathbb{R}^m,$  we define the order $u\preceq v $ if $v-u$ is a non-negative vector.
Pick any  $u=(u_1,\dots,u_m)\in A(z).$ We claim that there exists $z^*\in \CJ$
such that

\ \ (i) there exists $u^*\in A(z^*)$ such that  $u^*\preceq u$ (this  implies that $z-z^*\in \CJ$);

\ (ii) $z'-z^*\in \CJ$ ;

(iii) $\|z-z^*\|\le M'$, where $M'$ is a constant  depending only on $X_1,\dots, X_m$ and $C_0$.

Notice that $z=z^* + (z-z^*)$ and $z'=z^* + (z'-z^*)$ with $z^*, z-z^*, z'-z^*$ belonging to $\CJ$.
 Roughly speaking,  the point $z^*$ is not far away from $z$ and one
 can walk from $0$ to $z^*$. From there one can walk to  $z$ as well as to $z'$. (See Figure \ref{KKK} left.)

\begin{figure}[h]
  \includegraphics[height=4cm]{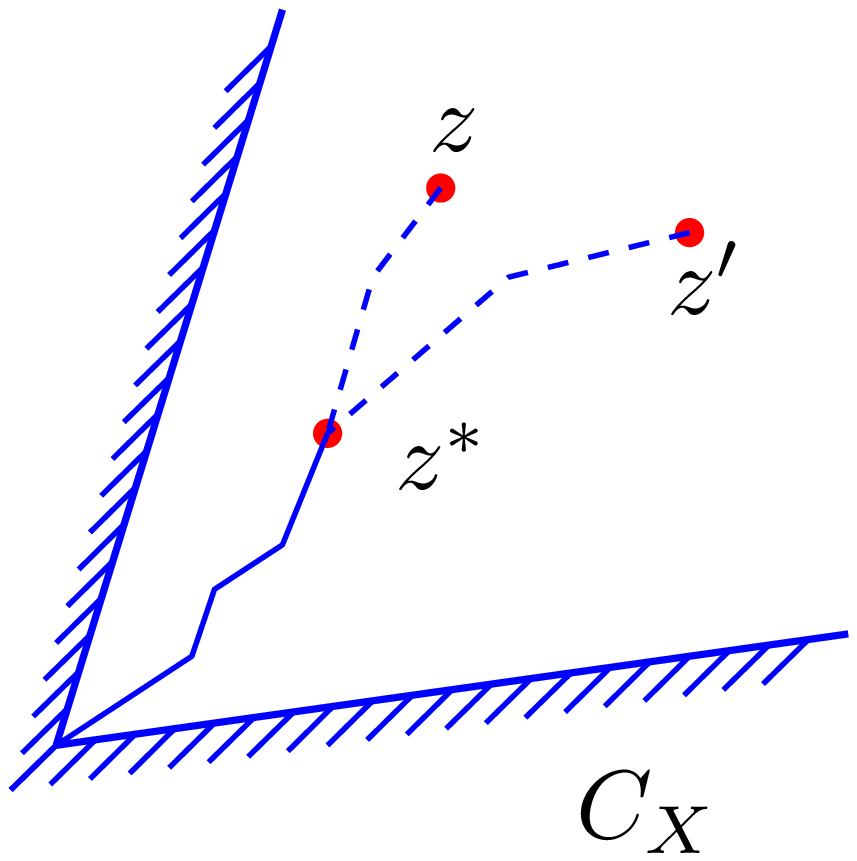}\quad
  \includegraphics[height=4cm]{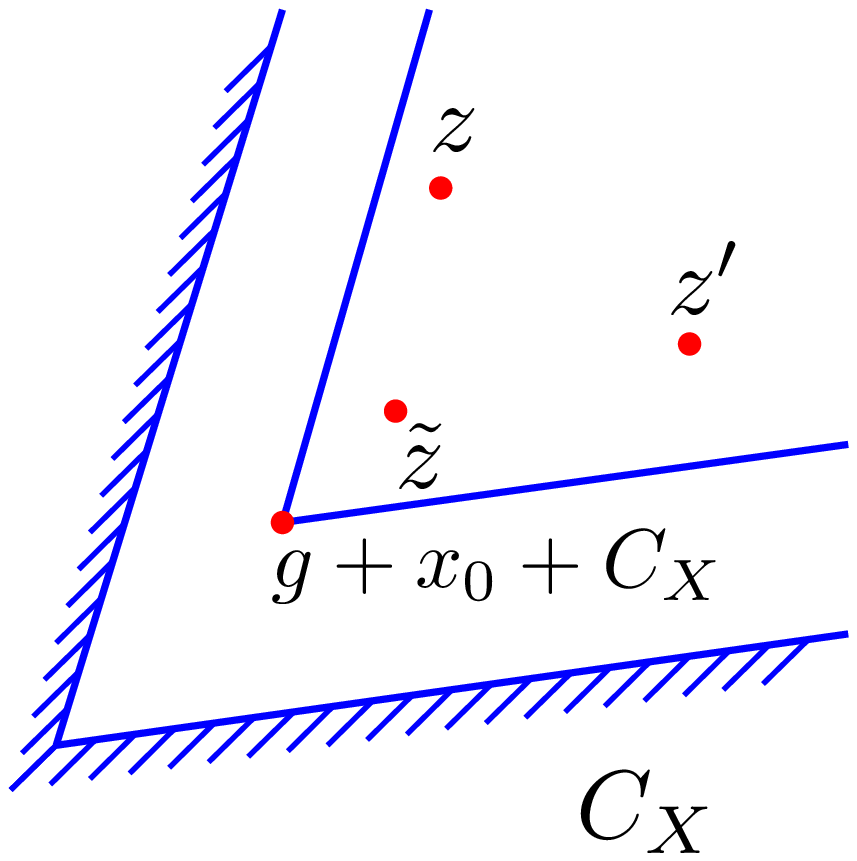}
  \caption{}\label{KKK}
\end{figure}

Suppose the claim is proved. Take any $v'=(v_1',\dots,v_m')\in A(z'-z^*)$ and set $v=u^*+v'$.
 Then $v\in A(z')$ and
 \begin{eqnarray*}
 \|u-v\|_\infty &\leq & \|u-u^*\|_\infty+\|v-u^*\|_\infty\\
 &\leq & (\|z-z^*\|+\|z'-z^*\|)/\delta \quad (\text{By Lemma \ref{lem-3.1}})\\
 &\leq & (2M'+C_0)/\delta.
 \end{eqnarray*}
 We have thus proved the theorem by choosing $M=\sqrt{m}(2M'+C_0)/\delta$.

Now it suffices to prove  the claim.  Take $x_0\in \CJ$ such that $B(x_0,C_0)\subset {\mathbf C}_X,$
where
$B(x,r)$ denotes the ball with center $x$ and radius $r$.
Take $g\in \CJ$ such that  $g+{\mathbf C}_X$ is a saturated cone.  We will prove the claim by distinguishing two cases according to
whether $z\in g+x_0+{\mathbf C}_X$.

\medskip

\textit{Case 1:} $z\in g+x_0+{\mathbf C}_X.$

A point $\tilde{z}=\sum_{j=1}^m \tilde{u}_jX_j$ is called a
\emph{first entering position}
if $\tilde{u}=(\tilde{u}_1,\dots,\tilde{u}_m)$ is a minimal vector (w.r.t. the order $\preceq$) such that $\tilde{z}\in g+x_0+{\mathbf C}_X$ and
$\tilde{u}\preceq u$.
Such $\tilde{z}$ do exist, but may not be unique (See Figure \ref{KKK} right).
We fix such a point $\tilde z$ and set  $z^*=z-\tilde{z}$.
Since $\tilde{u}\preceq u$, we have $z^* \in \CJ$.
We are going to check that (i)-(iii) hold for this $z^*$.

(i) holds since $\tilde{u}\preceq u$.

From $z-z^*=\tilde{z}\in g+x_0+{\mathbf C}_X$ we deduce that
 $z-z^*-x_0 \in g+ {\mathbf C}_X$. This, together with
  $\|z-z'\|\leq C_0$, implies
$$ z'-z^*\in B(z,C_0)-z^*=z-z^*-x_0+B(x_0,C_0)\subset (z-z^*-x_0)+{\mathbf C}_X
\subset g+{\mathbf C}_X.
$$
Hence $z'-z^*\in (g+{\mathbf C}_X)\cap \CL$, and so that $z'-z^*\in {\mathcal J}$
 by the saturation property of $g+C_X$.
This proves the property (ii).

Recall that $\tilde{z}\in g+x_0+{\mathbf C}_X$. The minimality of  $\tilde{z}$  implies
$$ \tilde{z}\in (g+x_0)+\left \{\sum_{j=1}^mc_jX_j;0\leq c_j<1\right \}.$$
It follows that
$$
\|z-z^*\|=\|\tilde{z}\|\leq\|g+x_0\|+ \sum_{j=1}^m\|X_j\|:=M',
$$
and hence (iii) holds.

\medskip

\textit{Case 2.}   $z\not \in g+x_0+{\mathbf C}_X.$

In this case, we could say that $z$ is close to a face of the cone ${\mathbf C}_X$. Indeed, the boundary of ${\mathbf C}_X$  can be written as
 $$\partial {\mathbf C}_X=\bigcup_{j=1}^N {\cal D}_j,$$
where ${\cal D}_j$ are faces of  ${\mathbf C}_X$, which are cones of  dimension $s-1$.
Let $\vec{n}_j$ be the unit vector perpendicular to ${\cal D}_j$  and pointing to the half-space containing ${\mathbf C}_X$, that is to say,
$\langle\vec{n}_j,x\rangle\geq 0$ for all $x\in {\mathbf C}_X.$
We note that  $x\in {\mathbf C}_X$ if and only if $\langle\vec{n}_j,x\rangle\geq 0$ for all $j\in \{1,\dots,  N\}.$

Since $z\not \in g+x_0+{\mathbf C}_X,$
  there exists an integer $j_0 \in \{1,\dots,N\}$ such that $\langle\vec{n}_{j_0},z-(g+x_0)\rangle<0 $, namely,
\begin{equation}\label{tube}
  \langle z,\vec{n}_{j_0}\rangle<\langle g+x_0,\vec{n}_{j_0}\rangle,
 \end{equation}
which means that  $z$ is closer to the face ${\mathcal D}_{j_0}$ than $g+x_0$. 
Let $\Omega_{j_0}=\{i;~\langle X_i,\vec{n}_{j_0}\rangle=0\}$.
Take any $u=(u_1,\dots, u_m)\in A(z)$ and set
$$z_1=\underset{i\in \Omega_{j_0}}{\sum}u_iX_i.$$
Clearly, $z_1\in {\cal D_{j_0}}$. Denote
$ \delta'=\min\{\langle X_i,\vec{n}_{j_0}\rangle;i\in\{1,\dots, m\} \setminus\Omega_{j_0}\}.$
Since $\langle X_j,\vec{n}_{j_0}\rangle\geq 0$ for all $j \in \{1,2, \cdots, m\}$, we  deduce that
for all $i \in\{1,\dots,m\}\setminus\Omega_{j_0}$ we have
$$
u_i \leq \frac{\langle z,\vec{n}_{j_0}\rangle}{\delta'}\leq \frac{\langle g+x_0,\vec{n}_{j_0}\rangle}{\delta'}:=M_1.
$$
It follows that
\begin{equation}\label{bd-u}
\|z-z_1\|\leq m M_1 \underset{1\leq j \leq m}{\max}\|X_j\|.
\end{equation}

\begin{figure}[h]
  \includegraphics[height=4cm]{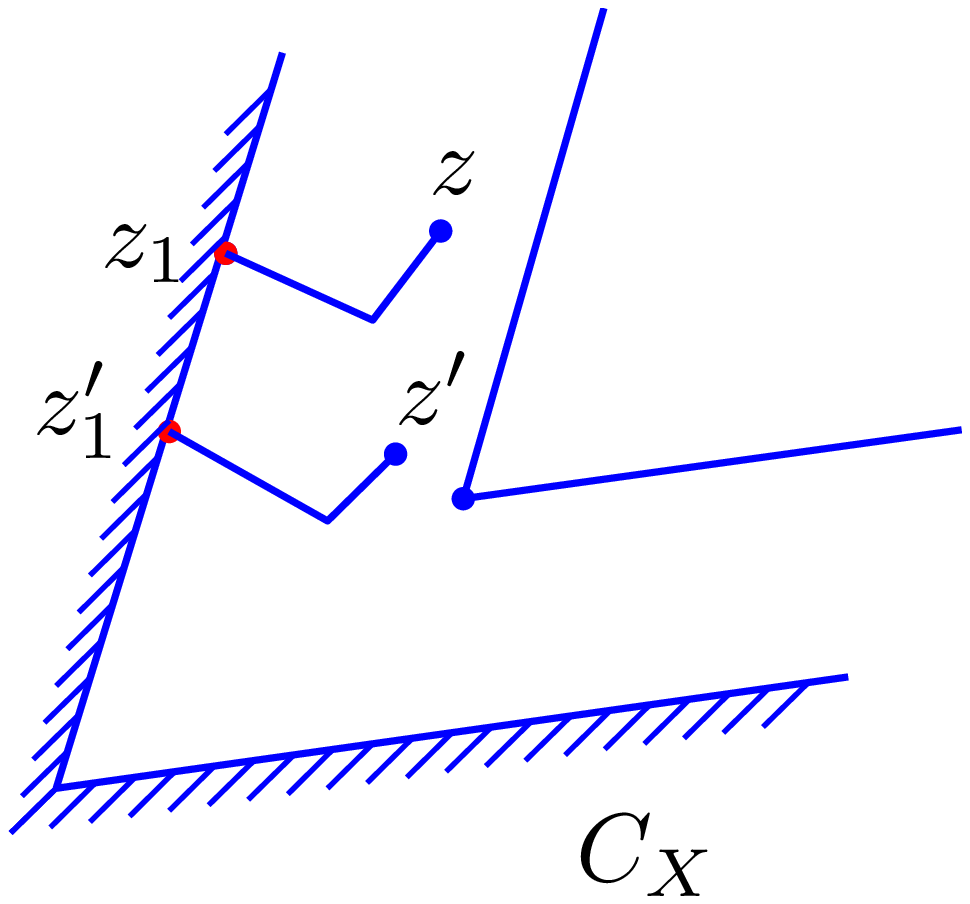}\\
  \caption{}\label{LLL}
\end{figure}

Take any $v=(v_1,\dots,v_m)\in A(z')$  and set $z'_1=\underset{i\in \Omega_{j_0}}{\sum}v_iX_i.$
Then $z'_1\in {\cal D_{j_0}}$. A similar argument as above shows that
$
v_i \leq M_1+C_0/\delta':= M_2,\  \text{for all }\  i\in\{1,\dots,m\}\setminus\Omega_{j_0},
$ and
\begin{equation}\label{bd-v}
 \|z'-z'_1\| \leq mM_2 \underset{1\leq j \leq m}{\max}\|X_j\|.
 \end{equation}

According to \eqref{bd-u} and \eqref{bd-v},  to prove the claim holds for $z$ and $z'$,
we only need  prove that the claim hold for $z_1$ and $z_1'$. That is say, suppose we can walk from the origin to a suitable point
$z_1^*$ and then walk from $z_1^*$ to $z_1$ as well as to $z_1'$. These walks remains in
 the lower dimensional cone ${\cal D}_{j_0}$. Of cause, we can finally walk  to $z$ and to $z'$.
 Observe that 
  $$\|z_1-z'_1\|  <m(M_1+M_2)\underset{1\leq j \leq m}{\max}\|X_j\|+C_0.$$
  The claim can thus be proved by induction on the dimension of the cone.
\end{proof}

The following theorem asserts that $\m(z)$ varies not so rapidly.
Theorem \ref{d_H} will be useful for its proof.

\begin{thm}\label{Q(x)}
Let $C_0\ge 1$ be an integer.  There exists a polynomial $Q(x)$ with positive coefficients such that
$$
 \frac{1}{Q(\|z\|)}\leq\frac{{\mathbf m}(z)}{{\mathbf m}({z^{'}})}\leq Q(\|z\||)
$$
provided that $z,z'\in {\mathcal J}$ and $\|z-z'\| \le C_{0}.$
\end{thm}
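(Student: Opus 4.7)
The plan is to leverage Theorem \ref{d_H} to set up a bounded-to-one correspondence between $A(z)$ and $A(z')$, and then to show that corresponding multinomial coefficients in the formula \eqref{m_z} have polynomially bounded ratios. More precisely, by Theorem \ref{d_H} there is an integer $M$ (depending only on $C_0$ and $X_1,\dots,X_m$) such that $d_H(A(z),A(z'))\le M$. Fix a map $\phi:A(z)\to A(z')$ sending each $u$ to some $v=\phi(u)\in A(z')$ with $\|u-v\|_\infty\le M$; such a map exists by the definition of the Hausdorff distance.

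The key estimate is then a polynomial bound on $\frac{|u|!/u!}{|v|!/v!}$ whenever $\|u-v\|_\infty\le M$. Writing $v_i=u_i+e_i$ with $|e_i|\le M$, each factor $v_i!/u_i!$ is a product of at most $M$ consecutive integers in the range $[0,\|u\|_\infty+M]$, hence bounded above by $(\|u\|_\infty+M)^M$ and below by a reciprocal of the same quantity. The ratio $|u|!/|v|!$ is controlled similarly since $||u|-|v||\le mM$. Combining these with the bound $\|u\|_\infty\le \|z\|/\delta$ from Lemma \ref{lem-3.1} yields a polynomial $P$ with positive coefficients such that
\begin{equation*}
\frac{1}{P(\|z\|)}\le \frac{|u|!/u!}{|v|!/v!}\le P(\|z\|)
\end{equation*}
for every $u\in A(z)$ and $v=\phi(u)$.

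Next, I control the fibers of $\phi$: the preimage $\phi^{-1}(v)$ consists of lattice points $u\in\mathbb{Z}^m$ within $\|\cdot\|_\infty$-distance $M$ of $v$, so $\#\phi^{-1}(v)\le (2M+1)^m$. Rewriting the sum \eqref{m_z} by grouping according to $\phi$ gives
\begin{equation*}
\mathbf{m}(z)=\sum_{v\in A(z')}\;\sum_{u\in\phi^{-1}(v)}\frac{|u|!}{u!}\le (2M+1)^m P(\|z\|)\sum_{v\in A(z')}\frac{|v|!}{v!}=(2M+1)^m P(\|z\|)\,\mathbf{m}(z'),
\end{equation*}
and the absorbed constant $(2M+1)^m$ can be folded into $Q$. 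The reverse inequality follows by exchanging the roles of $z$ and $z'$ (building $\phi':A(z')\to A(z)$ the same way), using that $\|z'\|\le\|z\|+C_0$ to convert a polynomial in $\|z'\|$ into a polynomial in $\|z\|$ with positive coefficients.

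The step I expect to be the most delicate is the polynomial bound on the multinomial ratio: one must handle the cases where some $u_i$ or $v_i$ could be small (even zero) carefully, so that no division by zero slips in and so that the polynomial degree is uniform in $z,z'$. This amounts to observing that if $v_i=0$ and $u_i>0$, then $e_i<0$ so the contribution falls into the numerator rather than the denominator; keeping careful track of signs of $e_i$ and of $|u|-|v|$ yields the clean two-sided bound required.
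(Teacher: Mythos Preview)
Your proof is correct and follows essentially the same route as the paper: both use Theorem~\ref{d_H} to pair elements of $A(z)$ with nearby elements of $A(z')$, bound the ratio of the corresponding multinomial coefficients by a polynomial via Lemma~\ref{lem-3.1}, and absorb the bounded multiplicity of the pairing into a constant (your $(2M+1)^m$ fiber bound plays the same role as the paper's $N_0$, the number of lattice points in $B(0,M)$). The only cosmetic difference is that you organize the comparison through an explicit map $\phi:A(z)\to A(z')$ and count fibers, whereas the paper writes the covering $A(z')\subset\bigcup_{u\in A(z)}\{v:\|u-v\|\le M\}$ directly and sums the inequality $N_0\,|u|!/u!\ge G(\|u\|)^{-1}\sum_{v:\|u-v\|\le M}|v|!/v!$ over $u\in A(z)$.
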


\begin{proof}  Let $M$ be the constant in  Theorem \ref{d_H}, which depends on $C_0$.
There exists a  polynomial $G(x)$  (depending on $m$ and $M$) with positive coefficients such that
$$\frac{|v|!}{|u|!}\cdot \frac{u!}{v!}\leq G(\|u\|)$$
holds for all $u, v \in \mathbb{N}^m $ such that $\|u-v\|\le M$.
In other word,
$$
\frac{|u|!}{u!} \geq   \frac{1}{G(\|u\|)} \frac{|v|!}{v!}.
$$
We can take $G(x) = x^{2mM}$.
Denoting by $N_0$  the number of integer points in the ball $B(0,M)$, we have
 \begin{equation}\label{Short}
 N_0 \frac{|u|!}{u!} \geq
\frac{1}{G(\|u\|)} \underset{v:\|u-v\|\le M}{\sum}\frac{|v|!}{v!}.
\end{equation}
Summing up both sides of \eqref{Short} over $u\in A(z)$, we obtain
\begin{equation}\label{Long}
  N_{0} \underset{u\in A(z)}{\sum}{\frac{|u|!}{u!}}
   \geq
  \frac{1}{G(\|u^*\|)} \underset{u\in A(z)} \sum \left (\underset{v:\|u-v\|\le M}{ \sum}\frac{|v|!}{v!} \right )
     \geq
  \frac{1}{G(\|u^*\|)}\underset{v \in A(z') }{ \sum}\frac{|v|!}{v!},
\end{equation}
where $u^*$ is a point in $A(z)$ such that $\|u^*\|$ attains the maximum.
The last inequality holds because
 $$ A(z') \subset \underset{u\in A(z)}{\bigcup} \{v\in {\mathbb N}^s;~ \|u-v\|\le M\}$$
by Theorem \ref{d_H}.
By recalling the expression (\ref{m_z}) for $\mathbf{m}(z)$,
 we see that (\ref{Long}) is nothing but
$$\frac{\mathbf{m}(z)}{\mathbf{m}(z')}\geq \frac{1}{N_0G(\|u^*\|)}.$$
Since
$\|u^*\|\leq m \|u^*\|_\infty \leq m \|z\|/\delta$ (by Lemma \ref{lem-3.1}), where $\delta$ is defined by \eqref{delta}, we have
$$
\frac{1}{N_0G(m \|z\|/\delta)} \leq \frac{\mathbf{m}(z)}{\mathbf{m}(z')}\leq
N_0G(m \|z'\|/\delta)
 $$
 where the second inequality is obtained from the first one by symmetry.
 Notice that $\|z'\|\leq \|z\| +C_0$. We have thus proved the  theorem with
$Q(x)=N_0G\left (\delta^{-1}{m(x+C_0)}\right )$.
\end{proof}

\section{\textbf{Existence of directional growth}}
In this section we prove the existence of the limit defining the directional growth $\gamma(\theta)$.
 Recall that  the multiplicity of a point in ${\mathbf C}_X$ is defined as
$$
{\mathbf m}(x)=\min \{{\mathbf m}(z): \ z\in {\mathcal J} \text{ and } \|x-z\|=d(x,{\mathcal J})\}.
$$

\medskip

\noindent \textbf{Proof of Theorem \ref{thm-growth}.} Fix a unit vector $\theta$ in ${\mathbf C}_X$.
 Let us denote $z_{k}$ to be the  point in $\CJ$ such that
 $$
  \m(k\theta)=\m(z_{k}), \quad d(k\theta,z_{k})=d(k\theta,\CJ).
 $$
 By the relative density of ${\mathcal J}$ (Lemma \ref{dense}), there exists a constant $R_0>0$ such that
\begin{equation}\label{R-dense}
\|z_{k}-k\theta\|<R_0
\end{equation}
for all $k$ and all $\theta$.
 By the definition of multiplicity, it is obvious that  $\mathbf{m}(z+z')\geq \mathbf{m}(z)\mathbf{m}(z')$ for any $z,z'\in {\mathcal J}$. In particular, for any $n,p\geq 1$,
\begin{equation}\label{sub-cross}
\mathbf{m}(z_{n}+z_{p})\geq \mathbf{m}(z_{n})\mathbf{m}(z_{p}).
\end{equation}
As consequence of (\ref{R-dense}), we have
$
\|z_{n+p}- (z_{n}+z_{p})\| \leq 3R_0.
$
Hence, by Theorem \ref{Q(x)},
there exists a polynomials $Q(x)$ with positive coefficients such that
$$
\mathbf{m}(z_{n+p})\geq \frac{1}{Q(\|z_{n+p}\|)}\mathbf{m}(z_{n}+z_{p})
\geq \frac{1}{Q(\|z_{n+p}\|)}\mathbf{m}(z_{n})\m(z_{p}).
$$
Using the fact $\mathbf{m}(k\theta)=\mathbf{m}(z_{k})$ for all $k$, we get then
$$\m ((n+p)\theta)\geq
\frac{1}{\widetilde Q (n+p)} \m (n\theta)\m (p\theta),
 $$
 where $\widetilde  Q (x)=Q (x+R_0)$. Here we used the observation that
$\|z_{n+p}\|\leq (n+p)+R_0$ and the fact that
$Q(x)$ has positive coefficients to get $Q(\|z_{n+p}\|)\le \widetilde Q (n+p)$.
Observe that $\log \widetilde Q (n+p)\leq c \log (n+p)$ for some constant $c>0$.
We can finish the proof by using the next lemma to $b_n = \log \m (n\theta)$.
\ $\Box$

\medskip
It is well known that if a sequence $b_n$ is sub-additive, \textit{i.e.}, $b_n+b_m\leq b_{n+m}$, then the limit
$\lim b_n/n$ exists. The following lemma strengthen this result.

\begin{lemma}\label{lem-subadd}
Let $c>0$ be a constant. If $\{b_n\}$ be a sequence in $\mathbb{R}^+$ such that
 \begin{equation}\label{sub-add}
 b_n+b_m \leq b_{n+m}+c\log (m+n),\quad \text{ for all } m,n\geq 1,
 \end{equation}
 then the limit $\underset{n \rightarrow  \infty}{\lim} b_n/n$ exists.
\end{lemma}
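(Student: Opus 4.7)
The plan is to view the hypothesis as a Fekete-type super-additivity up to a logarithmic defect and to establish $\liminf_{n\to\infty} b_n/n \ge L$, where $L := \limsup_{n\to\infty} b_n/n \in [0,+\infty]$. Since $\liminf \le \limsup = L$ is automatic, this inequality yields convergence of $b_n/n$ (to $L$, allowing $L=+\infty$). The principal obstacle is that the naive $q$-fold iteration of $b_{n+m} \ge b_n + b_m - c\log(n+m)$ with a uniform step $m$ accumulates an error of order $q\log(qm)$; divided by $n=qm$, this leaves a perturbation $\sim (\log n)/m$, which fails to tend to $0$ for fixed $m$. The remedy is to arrange the iteration so that only $O(\log n)$ super-additive combinations are ever performed.

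Fix $m \ge 1$ and first control the geometric subsequence $(b_{2^a m})_{a\ge 0}$ by doubling: the recursion $b_{2^{j+1} m} \ge 2 b_{2^j m} - c\log(2^{j+1} m)$ telescopes, and since $\sum_{j\ge 1} j\,2^{-j}$ and $\sum_{j\ge 1} 2^{-j}$ both converge, division by $2^a m$ gives a bound
\[
\frac{b_{2^a m}}{2^a m} \;\ge\; \frac{b_m}{m} \;-\; \frac{c(2\log 2 + \log m)}{m}
\]
uniform in $a$. For an arbitrary $q\ge 1$, expand $q = 2^{a_1} + \cdots + 2^{a_\ell}$ in binary with $\ell \le 1 + \log_2 q$ terms, and combine the corresponding dyadic blocks by applying the hypothesis $\ell-1$ times. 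Summing the dyadic lower bounds and dividing by $qm$ yields
\[
\frac{b_{qm}}{qm} \;\ge\; \frac{b_m}{m} \;-\; \frac{c(2\log 2 + \log m)}{m} \;-\; \frac{c\,\ell\,\log(qm)}{qm},
\]
in which the last term is $O\bigl((\log q)\log(qm)/(qm)\bigr) \to 0$ as $q\to\infty$.

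For general $n$, write $n = qm + r$ with $0 \le r < m$; applying the hypothesis once more and using the nonnegativity $b_r \ge 0$ give $b_n \ge b_{qm} - c\log n$, while $qm/n \to 1$. Combining with the previous estimate,
\[
\liminf_{n\to\infty} \frac{b_n}{n} \;\ge\; \frac{b_m}{m} \;-\; \frac{c(2\log 2 + \log m)}{m}.
\]
By definition of $L$ we may choose $m$ arbitrarily large with $b_m/m$ arbitrarily close to $L$ (or, when $L=+\infty$, exceeding any prescribed threshold). Since the residual perturbation $(\log m)/m$ vanishes as $m\to\infty$, we conclude $\liminf b_n/n \ge L$, which is the desired convergence. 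The critical step, and what distinguishes the argument from the classical Fekete lemma, is the dyadic decomposition of $q$: it keeps the number of log-error contributions at $O(\log n)$, so the total perturbation is $O((\log n)^2)=o(n)$ rather than the useless $\Theta(n\log n)$ that naive linear iteration would produce.
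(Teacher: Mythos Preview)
Your proof is correct and follows essentially the same approach as the paper's own argument: both fix $m$, use the doubling recursion $b_{2^{j+1}m}\ge 2b_{2^jm}-c\log(2^{j+1}m)$ to control the geometric subsequence, then write a general multiple $q$ of $m$ in binary so that only $O(\log q)$ applications of the hypothesis are needed, and finally handle the remainder $r$ with one more application and the nonnegativity of $b_r$. The resulting inequality $\liminf_n b_n/n \ge b_m/m - c(2\log 2+\log m)/m$ coincides with the paper's (their constant $\log 4m$ equals your $2\log 2+\log m$), and the conclusion follows by taking the $\limsup$ over $m$.
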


\begin{proof} Without loss of generality, we can assume $c=1$, otherwise we consider $c^{-1}b_n$
instead of $b_n$.
Fix a positive integer $m$.
By  (\ref{sub-add}) we have
\begin{eqnarray}\label{ind}
\forall s\geq 1, \qquad
b_{2^sm} \geq 2b_{2^{s-1}m} - \log(2^sm).
\end{eqnarray}
Fix a positive integer $l \geq 1$. For $ 1\leq s \leq l$, as consequence of  (\ref{ind})  we get
\begin{eqnarray}\label{ind1}
b_{2^lm}\geq
 2^lb_m-[2^{l-1}\log(2m)+2^{l-2}\log(4m)+\cdots+\log(2^lm)]
 =
 2^lb_m-g(m,l)
\end{eqnarray}
where $$
 g(m,l)=(2^{l+1}-2-l)\log 2+(2^l-1)\log m.
 $$

For any $n\in \mathbb{N},$ we write $n=Lm+r$ with $0\leq r<m$.
By (\ref{sub-add}), we have
$$b_n\geq b_{Lm} +b_r- \log n.$$
In order to estimate $ b_{Lm}$, we
use $L$'s dyadic expansion $L=2^{l_1}+2^{l_2}+\dots+2^{l_k}$ where
$ l_1>l_2>\dots >l_k\geq 0$.
Obviously  $ \log_2 L-1 \leq l_1 \leq \log_2 L$.
Using (\ref{sub-add}) then (\ref{ind1}), we obtain
\begin{eqnarray*}
b_{Lm}
&\geq&
b_{2^{l_1}m}+b_{(2^{l_2}+\dots+2^{l_k})m}-\log((2^{l_1}+2^{l_2}+\dots+2^{l_k})m)   \\
&\geq&
2^{l_1}b_m-g(m,l_1)-\log((2^{l_1}+2^{l_2}+\dots+2^{l_k})m)+b_{(2^{l_2}+\dots+2^{l_k})m}.
\end{eqnarray*}
Hence, by induction, we have
$$
b_{Lm} \geq \sum_{j=1}^{k-1}\left(2^{l_j}b_m-g(m,l_j)-\log((2^{l_j}+\cdots+2^{l_k})m)\right)+2^{l_k}b_m-g(m,l_k).
$$
Note that $\sum_{j=1}^k2^{l_j}=L$ and
$$\sum_{j=1}^k g(m,l_j)\leq \sum_{j=1}^k 2^{l_j}\log 4m \leq L \log (4m) ,$$
 $$ \sum_{j=1}^{k-1}\log((2^{l_j}+\cdots+2^{l_k})m)\leq l_1\log (Lm) \leq \log_2 n \log n.$$
Hence we have
$$ b_{Lm}\geq Lb_m-L\log 4m- \log_2 n \log n.$$
Dividing both sides by $n$, taking the liminf and using the fact $L/n\to m$, we have
\begin{equation}\label{b_n}
\underset{n\rightarrow \infty}{\liminf}\ \frac{b_n}{n}\geq  \frac{b_m-\log 4m}{m}.
\end{equation}
Then taking the limsup as $m\to \infty$ finishes the proof.
\end{proof}

\section{\textbf{Principle of Maximal entropy under linear constraints}}

Let $\Delta_m$ be the simplex of all probability measures $p=(p_1,\dots, p_m)$. The entropy function
 $h(p)$ is defined on $\Delta_m$ as follows
$$
h(p)=-\sum_{j=1}^m p_j\log p_j.
$$
Let $X_1, \cdots, X_m$ be $m$ given vectors in a Euclidean space $\mathbb{R}^d$. We will consider
the maximum of $h(p)$ under the constraints
\begin{equation}\label{constraints}
    \sum_{j=1}^m p_j=1, \qquad \sum_{j=1}^m p_j X_j = \beta
\end{equation}
for any $\beta$ in the convex hull generated by $X=\{X_1, \cdots, X_m\}$ which is defined by
$$
    \Delta_X = \left\{\sum_{j=1}^m p_j X_j: (p_1, \cdots, p_m)\in \Delta_m\right\}.
$$
When $d=1$, the solution is given by the principle of maximum entropy due to Jaynes \cite{Jaynes}.

Let us define
$$
Z(t)=\sum_{j=1}^m e^{\langle t, X_j \rangle},\qquad t\in \R^d.
$$
We have
$$
  \nabla Z(t) = \sum_{j=1}^m X_j e^{\langle t, X_j \rangle}
$$

\begin{thm} Suppose that the vectors $X_1, \cdots, X_m$ are not coplanar and $\beta$
is in the interior of $\Delta_X$. Then under the constraints
$\sum_{j=1}^m p_j=1$ and $\sum_{j=1}^m p_jX_j=\beta$, the entropy function $h(p)$
attains its maximum at the maximal point $p^*$ defined by
\begin{equation}\label{Gibbs}
   p_j^* = \frac{e^{\langle t_\beta, X_j\rangle}}{Z(t_\beta)} \qquad (j=1, 2, \cdots, m)
\end{equation}
where $t_\beta$ is
the unique solution of the equation
\begin{equation}\label{Eqn-t-beta}
           \frac{\nabla Z(t)}{Z(t)}=\beta.
\end{equation}
Actually the maximal point is unique and the maximum entropy is equal to
$$
      h(p^*) = \log Z(t_\beta) -\langle t_\beta, \beta\rangle.
$$
The map $\beta \mapsto t_\beta$ is $C^\infty$-diffeomorphism from $\stackrel{\circ}{\Delta}_X$
onto $\mathbb{R}^d$.
\end{thm}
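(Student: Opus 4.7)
The plan is to guess the maximizer via Lagrange multipliers, verify optimality by the Gibbs inequality, and finally establish the parametric regularity via the inverse function theorem. Set $\varphi(t) = \log Z(t) = \log \sum_{j=1}^m e^{\langle t, X_j\rangle}$; a direct computation gives
\[
\nabla\varphi(t) = \sum_{j=1}^m p_j(t)\,X_j, \qquad \mathrm{Hess}\,\varphi(t) = \mathrm{Cov}_{p(t)}(X),
\]
where $p_j(t) = e^{\langle t, X_j\rangle}/Z(t)$. Under the hypothesis that $X_1,\ldots,X_m$ are not coplanar and that $\beta \in \mathrm{int}(\Delta_X)$---which forces $\Delta_X$ to have non-empty interior, hence $\{X_j\}$ to affinely span $\mathbb{R}^d$---the covariance matrix $\mathrm{Hess}\,\varphi(t)$ is positive definite, so $\varphi$ is strictly convex.

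Next I will argue that $\nabla\varphi : \mathbb{R}^d \to \mathrm{int}(\Delta_X)$ is a $C^\infty$-diffeomorphism. Injectivity follows from strict convexity; the inverse function theorem applied to the positive-definite Hessian makes the map a local $C^\infty$-diffeomorphism, so its image is open. The inclusion $\nabla\varphi(\mathbb{R}^d) \subset \mathrm{int}(\Delta_X)$ is clear because $\nabla\varphi(t)$ is a strictly positive convex combination of the $X_j$. For the reverse inclusion---the main analytic hurdle---I would use a boundary-behavior argument: writing $t = s\,u$ with $|u|=1$ and letting $s \to +\infty$, the weights $p_j(su)$ concentrate on the indices maximizing $\langle u, X_j\rangle$, so $\nabla\varphi(t)$ tends to the barycenter of those $X_j$, which lies on a proper face of $\Delta_X$. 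Hence the image of $\nabla\varphi$ is both open and relatively closed in $\mathrm{int}(\Delta_X)$, and therefore equals $\mathrm{int}(\Delta_X)$. This yields existence and uniqueness of $t_\beta$ solving \eqref{Eqn-t-beta}, and via the inverse function theorem the $C^\infty$-diffeomorphism property of $\beta \mapsto t_\beta$.

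With $p^*$ defined by \eqref{Gibbs}, the constraints \eqref{constraints} hold by construction. For optimality and the explicit value, I invoke the relative-entropy identity: for any probability vector $p$ satisfying \eqref{constraints},
\[
-\sum_{j=1}^m p_j \log p_j^* = -\sum_{j=1}^m p_j\bigl(\langle t_\beta, X_j\rangle - \log Z(t_\beta)\bigr) = \log Z(t_\beta) - \langle t_\beta, \beta\rangle,
\]
which is independent of $p$. Consequently $h(p) = \log Z(t_\beta) - \langle t_\beta, \beta\rangle - D(p\,\|\,p^*)$, where $D$ denotes the Kullback--Leibler divergence. Since $D(p\,\|\,p^*) \geq 0$ with equality if and only if $p = p^*$, both the maximum value $h(p^*) = \log Z(t_\beta) - \langle t_\beta, \beta\rangle$ and the uniqueness of the maximizer follow at once.

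The hardest step I anticipate is the surjectivity of $\nabla\varphi$ onto $\mathrm{int}(\Delta_X)$: one must verify rigorously that along every sequence $t_n$ with $\|t_n\| \to \infty$, every limit point of $\nabla\varphi(t_n)$ lies on $\partial\Delta_X$. Passing to a subsequence so that $t_n/\|t_n\|$ converges to a unit vector $u$ and then analyzing which indices $j$ dominate the weights $p_j(t_n)$ should suffice to pin down the limiting face. Once this convex-analytic fact is in hand, the Gibbs-inequality portion is essentially formal and the theorem follows.
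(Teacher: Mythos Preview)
Your proposal is correct and takes a genuinely different route from the paper's proof.

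The paper argues in the opposite order from you. It first establishes, by a compactness-plus-perturbation argument, that the constrained maximizer $p^*$ exists and lies in the open simplex $\stackrel{\circ}{\Delta}_m$ (this uses the auxiliary lemma $\stackrel{\circ}{\Delta}_X = M(\stackrel{\circ}{\Delta}_m)$); then it applies Lagrange multipliers to deduce the Gibbs form \eqref{Gibbs}, which simultaneously shows that the equation $\nabla Z(t)/Z(t)=\beta$ has a solution. Uniqueness of $t_\beta$ is obtained by a short algebraic argument (two solutions would force $\langle t-t',X_i-X_j\rangle=0$, contradicting non-coplanarity), and the smoothness of $\beta\mapsto t_\beta$ comes from the same Hessian/covariance computation you use.

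You instead attack the equation $\nabla\varphi=\beta$ head-on: strict convexity gives injectivity, the positive-definite Hessian gives openness of the image, and your boundary-behavior argument shows $\nabla\varphi$ is proper into $\stackrel{\circ}{\Delta}_X$, hence surjective. Once $t_\beta$ is in hand, your relative-entropy identity $h(p)=\log Z(t_\beta)-\langle t_\beta,\beta\rangle - D(p\|p^*)$ dispatches optimality, the explicit value, and uniqueness of the maximizer in a single line. This is cleaner than the paper's separate strict-concavity and Lagrange-multiplier steps, and it avoids the interiority lemma for the maximizer entirely. The price is the properness argument for $\nabla\varphi$, which the paper sidesteps by letting the variational problem furnish the solution; conversely, the paper must work harder to rule out boundary maximizers. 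Both routes rely on the same covariance computation for the diffeomorphism claim.
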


The proof of the theorem will be decomposed into several lemmas.

We will denote by $\stackrel{\circ}{A}$ the interior of a set $A$.
Let $M: \mathbb{R}^m \to \mathbb{R}^d$ the linear map defined by the  $d\times m$
matrix
$$
M=\left ( X_1,\dots, X_m \right ).
$$

\begin{lemma}\label{lemma5-1}
$
       \stackrel{\circ}{\Delta}_X = M(\stackrel{\circ}{\Delta}_m).
$
\end{lemma}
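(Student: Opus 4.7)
My plan is to verify both inclusions using elementary convexity together with the observation that, under the standing hypotheses, the affine hull of $\{X_1, \ldots, X_m\}$ is all of $\mathbb{R}^d$. To see this auxiliary claim, recall from the introduction that $X_1,\ldots,X_m$ span $\mathbb{R}^d$; if their affine hull were a proper affine subspace $v_0 + V$, then $V$ would have dimension at least $d-1$ (otherwise the linear span would be too small), hence be a hyperplane, and $v_0 \notin V$ (otherwise every $X_j \in V$, contradicting the span condition). But then all $X_j$ would lie on an affine hyperplane not passing through the origin, contradicting non-coplanarity. Consequently the restriction of the linear map $M$ to the affine hyperplane $H = \{p \in \mathbb{R}^m : \sum_j p_j = 1\}$ is an affine surjection $M|_H : H \to \mathbb{R}^d$.

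For the inclusion $M(\stackrel{\circ}{\Delta}_m) \subseteq \stackrel{\circ}{\Delta}_X$, I would invoke the fact that a surjective affine map between finite-dimensional real affine spaces is open (decompose the domain as kernel plus complement, on which the restriction is a linear isomorphism). Since $\stackrel{\circ}{\Delta}_m$ is relatively open in $H$, its image $M(\stackrel{\circ}{\Delta}_m)$ is open in $\mathbb{R}^d$; being trivially contained in $\Delta_X$, it lies in $\stackrel{\circ}{\Delta}_X$.

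For the reverse inclusion, fix $\beta \in \stackrel{\circ}{\Delta}_X$ and consider the barycenter $b := \tfrac{1}{m}\sum_j X_j = M\bigl(\tfrac{1}{m},\ldots,\tfrac{1}{m}\bigr)$, which certainly lies in $M(\stackrel{\circ}{\Delta}_m)$. Since $\beta$ is an interior point of the convex set $\Delta_X$, for $\epsilon > 0$ sufficiently small the point $\beta' := (1+\epsilon)\beta - \epsilon b$ still lies in $\Delta_X$, so $\beta' = \sum_j q_j X_j$ for some $q \in \Delta_m$. Rewriting
\[
\beta \;=\; \frac{1}{1+\epsilon}\beta' + \frac{\epsilon}{1+\epsilon} b \;=\; \sum_{j=1}^m \tilde p_j X_j, \qquad \tilde p_j := \frac{q_j}{1+\epsilon} + \frac{\epsilon}{m(1+\epsilon)},
\]
gives $\tilde p_j \geq \tfrac{\epsilon}{m(1+\epsilon)} > 0$ and $\sum_j \tilde p_j = 1$, whence $\tilde p \in \stackrel{\circ}{\Delta}_m$ and $\beta = M(\tilde p) \in M(\stackrel{\circ}{\Delta}_m)$.

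There is no serious obstacle here; the only point requiring a touch of care is the affine-hull claim in the first paragraph, since the paper's definition of \emph{coplanar} refers specifically to a hyperplane not through the origin, so the spanning assumption must be combined with non-coplanarity to conclude that $M|_H$ is surjective onto $\mathbb{R}^d$. Once that is in hand, the forward inclusion follows from openness of surjective affine maps and the reverse from the ``push toward the barycenter'' trick above.
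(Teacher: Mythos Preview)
Your proof is correct. The forward inclusion is essentially the paper's argument: both of you use that a surjective linear (or affine) map between finite-dimensional spaces is open, the paper phrasing this via the factorization $M=\widetilde M\circ\pi$ through the quotient by $\ker M$. You are more careful than the paper in isolating the auxiliary fact that the affine hull of $\{X_1,\dots,X_m\}$ is all of $\mathbb{R}^d$, and your derivation of this from spanning plus non-coplanarity is clean and correct.

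The reverse inclusion, however, is handled quite differently. The paper shows that each extremal point $X_j$ of $\Delta_X$ lies in the closure of $M(\stackrel{\circ}{\Delta}_m)$ by exhibiting the explicit Gibbs-type approximants $Y_n=\nabla Z(nh)/Z(nh)$ for a suitable $h$, and then argues that any interior point of $\Delta_X$ is a strictly positive convex combination of points of $M(\stackrel{\circ}{\Delta}_m)$ lying near these extremal points. Your barycenter perturbation---writing $\beta$ as a convex combination of the barycenter $b=M(\tfrac1m,\dots,\tfrac1m)\in M(\stackrel{\circ}{\Delta}_m)$ and a point $\beta'\in\Delta_X$ slightly beyond $\beta$---is more elementary and entirely self-contained: it uses only basic convexity and makes no reference to the partition function $Z$. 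The paper's route has the virtue of foreshadowing the Gibbs measures that drive the rest of the section, while yours has the virtue of showing the lemma is a purely convex-geometric fact independent of that machinery.
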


\begin{proof} 
Notice that $Mp = \sum_{j=1}^m p_j X_j$.  Let $\pi: \mathbb{R}^m \to \mathbb{R}^m/_{\mbox{\rm ker}(M)}$ be the canonical map from $\mathbb{R}^m$ to the quotient space $\mathbb{R}^m/_{\mbox{\rm ker}(M)}$ and
let $\widetilde{M}$ be the compatible map such that $M
=\widetilde{M}\circ \pi$. Since the map $\pi$ is open and the map $\widetilde{M}$ is a homeomorphism, the subset $M(\stackrel{\circ}{\Delta}_m)$ of $\Delta_X$ is open, so that $M(
\stackrel{\circ}{\Delta}_m)\subset \stackrel{\circ}{\Delta}_X$.

On the other hand, the compact convex set $\Delta_X$ admits its extremal points among
$\{X_1, \cdots, X_m\}$. We first claim that any extremal point, say $X_1$, is a limit point of
$M(\stackrel{\circ}{\Delta}_m)$. In fact, since $X_1-X_j$ ($j=2, \cdots, m$) are in a half-space, there is a vector $h \in \mathbb{R}^d$ such that $\langle h, X_1-X_j\rangle >0$ for all $j=2, \cdots, m$.
Then the points
$$
      Y_n = \frac{\nabla Z(n h)}{Z(n h)} \in M(\stackrel{\circ}{\Delta}_m)
$$
tend to $X_1$ (the argument holds even if some of $X_j$ ($j\ge 2$) are equal to $X_j$, such a case was not excluded). To finish the proof, it suffices to observe that any $\beta \in \stackrel{\circ}{\Delta}_X$ is a convex combination with strict positive coefficients of a set of
points in $\stackrel{\circ}{\Delta}_X$, each of which is sufficiently close to an extremal point
of ${\Delta}_X$.  It follows that $\beta \in  M(\stackrel{\circ}{\Delta}_m)$.
\end{proof}

The constraints (\ref{constraints}) define a compact set on which the entropy function which is continuous attains
its maximum. We will show that the entropy function attains its maximum at an interior point  and the maximal point is unique if $\beta \in \stackrel{\circ}{\Delta}_X$.

\begin{lemma}Assume $\beta \in \stackrel{\circ}{\Delta}_X$. Under the constraints
(\ref{constraints}), the entropy attains its maximum at a point $p^* \in \stackrel{\circ}{\Delta}_m$.
Such maximal points are unique.
\end{lemma}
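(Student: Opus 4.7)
The plan is to treat existence (with interior location) and uniqueness as two separate arguments, both following from standard convex-analytic properties of the entropy once Lemma \ref{lemma5-1} is in hand.

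\textbf{Existence and interior location.} The feasible set $K_\beta=\{p\in\Delta_m: Mp=\beta\}$ is closed (defined by linear equalities) and bounded, hence compact; Lemma \ref{lemma5-1} gives some $q\in\stackrel{\circ}{\Delta}_m$ with $Mq=\beta$, so $K_\beta\neq\emptyset$. Since $h$ is continuous on $\Delta_m$ with the convention $0\log 0=0$, it attains its maximum at some $p^*\in K_\beta$. Suppose for contradiction that $p^*\in\partial\Delta_m$, so that $p^*_{j_0}=0$ for some $j_0$. Fix $q$ as above and set $p_\varepsilon=(1-\varepsilon)p^*+\varepsilon q$; by linearity $Mp_\varepsilon=\beta$, and $p_\varepsilon\in\stackrel{\circ}{\Delta}_m$ for every $\varepsilon\in(0,1]$. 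Writing $f(x)=-x\log x$, we decompose
\[
h(p_\varepsilon)-h(p^*)=\sum_{j:\,p^*_j>0}\!\bigl[f(p_\varepsilon^j)-f(p^*_j)\bigr]+\sum_{j:\,p^*_j=0}f(\varepsilon q_j).
\]
The first sum is $O(\varepsilon)$ because $f$ is smooth on $(0,1]$, while each term $f(\varepsilon q_j)=\varepsilon q_j|\log\varepsilon|-\varepsilon q_j\log q_j$ in the second sum is strictly positive and of order $\varepsilon|\log\varepsilon|$ as $\varepsilon\to 0^+$, since $q_j>0$ for every $j$. Hence $h(p_\varepsilon)>h(p^*)$ for all sufficiently small $\varepsilon>0$, contradicting the maximality of $p^*$. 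Therefore $p^*\in\stackrel{\circ}{\Delta}_m$.

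\textbf{Uniqueness.} The scalar function $f(x)=-x\log x$ satisfies $f''(x)=-1/x<0$ on $(0,1)$ and is continuous on $[0,1]$, so it is strictly concave on $[0,1]$. Consequently $h(p)=\sum_{j=1}^m f(p_j)$ is strictly concave on $\Delta_m$: if $p\neq q$ in $\Delta_m$ then some coordinate differs, and strict concavity of $f$ at that coordinate combined with (plain) concavity of $f$ at the remaining coordinates gives $h\bigl(\tfrac{p+q}{2}\bigr)>\tfrac12(h(p)+h(q))$. Since $K_\beta$ is convex, a strictly concave function can attain its maximum on $K_\beta$ at most once, so $p^*$ is unique.

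\textbf{Main obstacle.} The only delicate point is the boundary-repulsion step: the proof that $p^*$ cannot lie on $\partial\Delta_m$ uses the logarithmic singularity of $f'$ at $0$, which forces the directional derivative of $h$ into the interior along $p_\varepsilon-p^*$ to be $+\infty$ whenever $p^*$ has a vanishing coordinate. Compactness, continuity, and strict concavity are then routine. Note that the hypothesis that $X_1,\dots,X_m$ are not coplanar, although assumed in the surrounding theorem, is not used in this lemma; only $\beta\in\stackrel{\circ}{\Delta}_X$ is needed, and it enters exclusively through Lemma \ref{lemma5-1} to produce an interior feasible point.
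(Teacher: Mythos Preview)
Your proof is correct and follows essentially the same approach as the paper. Both arguments take an interior feasible point $q$ (from Lemma \ref{lemma5-1}), perturb $p^*$ along the segment toward $q$ via $p_\varepsilon=(1-\varepsilon)p^*+\varepsilon q$ (the paper writes this as $p^*+t(q-p^*)$), and exploit the logarithmic singularity of $-x\log x$ at $0$ to force $h(p_\varepsilon)>h(p^*)$; the only cosmetic difference is that the paper computes the derivative $\frac{d}{dt}h(p_t)$ and shows it tends to $+\infty$, whereas you estimate the increment $h(p_\varepsilon)-h(p^*)$ directly as $\varepsilon|\log\varepsilon|+O(\varepsilon)$.
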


\begin{proof} The uniqueness of maximal points is just because of the strict concavity of the entropy function.

Let $p^*$ be the maximal point. Suppose that $p^*$ is not strictly  positive.
 Without loss of generality, we assume that $p_j^*>0$ for $j=1,\dots, k$, but $p_j^*=0$ for $j=k+1,\dots, m$.
Since $\beta\in \stackrel{\circ}{\Delta}_X$, by Lemma \ref{lemma5-1}, there exists a  probability vector $q \in \stackrel{\circ}{\Delta}_m$ such that $\beta=Mq$. Denote $r=q-p^*$.
Then we have $Mr=0$.
Notice that $\sum_{i=1}^m r_i=0$ and $r_j>0$ for $j=k+1,\dots, m$.
Consider the perturbation of $p^*$ defined by
$$
p_t={p^*+tr}.
$$
For small $t>0$, $p_t$ is  a probability  satisfying the constraint $Mp_t=\beta$.
Then consider of function $f(t)=h(p_t)$, that is
$$
f(t)=-\sum_{j=1}^m (p^*_j+tr_j)\log(p_j^*+tr_j).
$$
Its derivative is equal to
$$
f'(t)=-\sum_{j=1}^m r_j\left (\log(p_j^*+tr_j)+1\right ).
$$
Let $\epsilon=\min_{1\le j \le k} p_j^*$. Then for $t$ small enough, we have
$$
-r_j(\log(p_j^*+tr_j)+1)\geq |r_j|(\log \epsilon/2+1), \qquad \text{ for } j=1,\dots, k;
$$
but as $t \to 0^+$,
$$
-r_j(\log(p_j^*+tr_j)+1)=-r_j\log(t r_j)-r_j\to \infty, \qquad \text{ for } j=k+1,\dots, m.
$$
So we have  $f'(t)>0$  for small $t>0$ and hence $f(t)$ is increasing near $0$. This contradicts the maximality
of $h(p^*)$.
\end{proof}

\begin{lemma}\label{lem-entropy}
There exists a unique point $t_\beta\in \R^d$ such that
\begin{equation}\label{prob}
p^*_j=\frac{e^{\langle t_\beta, X_j\rangle}}{Z(t_\beta)},
\qquad 1\leq j\leq  m.
\end{equation}
This point $t_\beta$ is the unique solution of the equation $\nabla Z(t)/Z(t) =\beta$.
The maximal entropy $h(p^*)$ is equal to $\log Z(t_\beta) -\langle t_\beta, \beta\rangle$.
\end{lemma}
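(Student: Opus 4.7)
The plan is to apply the Lagrange multiplier method at the interior maximum point $p^*$ identified in the previous lemma, read off the Gibbs form from the critical point equations, and then establish uniqueness of $t_\beta$ via the strict convexity of $\log Z$.

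First I would form the Lagrangian
$$L(p,\mu,t) = -\sum_{j=1}^m p_j\log p_j - \mu\Bigl(\sum_{j=1}^m p_j - 1\Bigr) + \langle t, Mp-\beta\rangle,$$
with multipliers $\mu\in\R$ and $t\in\R^d$. By the previous lemma, $p^*\in \stackrel{\circ}{\Delta}_m$, so I can differentiate freely in the interior. The first order conditions $\partial L/\partial p_j =0$ give $-\log p_j^* - 1 - \mu + \langle t, X_j\rangle = 0$, whence $p_j^* = e^{-1-\mu}e^{\langle t, X_j\rangle}$; summing to $1$ forces $e^{-1-\mu}=1/Z(t)$, yielding the Gibbs form $p_j^* = e^{\langle t, X_j\rangle}/Z(t)$. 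Substituting into the constraint $Mp^*=\beta$ produces exactly the equation
$$\frac{\nabla Z(t)}{Z(t)} = \sum_{j=1}^m X_j\,\frac{e^{\langle t, X_j\rangle}}{Z(t)} = \beta,$$
so any admissible Lagrange multiplier $t$ is a solution of \eqref{Eqn-t-beta}.

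Next I would prove uniqueness of $t_\beta$. Let $\Phi(t):=\log Z(t)$. A direct computation gives $\nabla\Phi(t)=\nabla Z(t)/Z(t)$ and shows that the Hessian of $\Phi$ at $t$ equals the covariance matrix of the random vector $\xi(t)$ taking value $X_j$ with probability $p_j(t)=e^{\langle t,X_j\rangle}/Z(t)$. This matrix is positive semidefinite in general, and strictly positive definite if and only if there is no non-zero $v\in\R^d$ and constant $c\in\R$ with $\langle v, X_j\rangle=c$ for every $j$, i.e.\ the $X_j$ affinely span $\R^d$. The hypothesis that the $X_j$ span $\R^d$ linearly combined with the non-coplanarity assumption yields precisely this: if the $X_j$ lay in an affine hyperplane $\langle \eta, x\rangle = c$ with $c\neq 0$ they would be coplanar after rescaling $\eta$ by $1/c$, and if $c=0$ they would fail to span $\R^d$ linearly. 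Hence $\Phi$ is strictly convex on $\R^d$, its gradient is injective, and $t_\beta$ is unique. Surjectivity of $\nabla\Phi$ onto $\stackrel{\circ}{\Delta}_X$ is already guaranteed: the existence half is supplied by the Lagrange argument combined with Lemma \ref{lemma5-1}.

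Finally, evaluating the maximum entropy is a direct substitution. From $\log p_j^* = \langle t_\beta, X_j\rangle - \log Z(t_\beta)$ we get
$$h(p^*) = -\sum_{j=1}^m p_j^*\log p_j^* = -\Bigl\langle t_\beta,\sum_{j=1}^m p_j^* X_j\Bigr\rangle + \log Z(t_\beta) = \log Z(t_\beta) - \langle t_\beta, \beta\rangle,$$
using $\sum_j p_j^*X_j=\beta$. The main obstacle I expect is the uniqueness step, specifically the careful translation between the paper's affine notion of coplanarity (existence of $\eta$ with $\langle\eta,X_j\rangle=1$) and the full affine-spanning condition needed for strict positive definiteness of the covariance Hessian; once that is settled, the rest of the argument is routine.
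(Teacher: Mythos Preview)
Your proposal is correct and follows the paper's overall architecture: Lagrange multipliers at the interior maximizer $p^*$ to obtain the Gibbs form, then uniqueness of $t_\beta$, then the direct entropy evaluation. The one place you diverge is the uniqueness step. The paper argues more elementarily and in two stages: first, if two points $t,t'$ both represent $p^*$ in Gibbs form, then $e^{\langle t-t',X_j\rangle}=Z(t)/Z(t')$ is constant in $j$, so $\langle t-t', X_i-X_j\rangle=0$ for all $i,j$, which contradicts non-coplanarity; second, if $t'$ is another solution of $\nabla Z/Z=\beta$, one checks that the associated Gibbs vector $p'$ is also a constrained maximizer, hence $p'=p^*$ by the previous lemma, and then $t'=t$ by the first stage. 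Your route via strict convexity of $\log Z$ (its Hessian is the covariance matrix of the Gibbs law, strictly positive definite exactly when the $X_j$ affinely span $\R^d$) is the standard exponential-family argument and gives injectivity of $\nabla\log Z$ in one stroke, bypassing the detour through uniqueness of the maximizer. Both arguments ultimately rest on the same affine-spanning condition you correctly flagged as the crux.
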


\begin{proof} Consider the function
$$
     F(p, \lambda, t) = h(p) +\lambda \left(\sum_{j=1}^m p_j -1\right) + \left\langle t, \sum_{j=1}^m p_j X_j-\beta\right\rangle
$$
where $p\in {(\mathbb{R}^{+}) }^m$, $\lambda \in \mathbb{R}$ and $t \in \mathbb{R}^d$. Both
$\lambda$ and $t$ are Lagrange multipliers. The maximal point $p^*$ whose existence is proved above
must be the critical point of $F$. But
$$
    \frac{\partial F}{\partial p_j} = - \log p_j -1 +\lambda + \langle t, X_j\rangle.
$$
We deduce that the maximal point $p^*$ is of the form
\begin{equation}\label{p-t}
       p_j^* = \frac{e^{\langle t, X_j\rangle}}{Z(t)}
\end{equation}
for some $t$ verifying $\nabla Z(t)/Z(t)=\beta$. By the way, we have proved that
the equation $\nabla Z(t)/Z(t)=\beta$ admits a solution.
We claim that there is a unique $t$ verifying (\ref{p-t}). Suppose
$t'\not=t$ is another suitable point. Then
$$
    \frac{e^{\langle t, X_j\rangle}}{Z(t)} = \frac{e^{\langle t', X_j\rangle}}{Z(t')},
    \qquad i.e. \ \ \
        e^{\langle t-t', X_j\rangle} = \frac{Z(t)}{Z(t')}.
$$
Then $e^{\langle t-t', X_i- X_j\rangle}=1$, i.e. $\langle t-t', X_i- X_j\rangle=0$
for all $i, j$. This contradicts that $X_j$'s are not coplanar. Clear
$h(p^*) = \log Z(t_\beta) - \langle t_\beta, \beta\rangle$ where $t_\beta$
is the unique point satisfying (\ref{p-t}).

Now we prove that the equation $\nabla Z(t)/Z(t)=\beta$ admits a unique solution.
Suppose that $t'$ is another solution. We can check that the probability
$p'$ defined by $p'_j = e^{\langle t', X_j\rangle}/Z(t')$ is a maximal point.
So, $p'=p^*$ and then $t'=t$.
\end{proof}

Consider $t_\beta$ as a function of $\beta \in \stackrel{\circ}{C}_X$. It is the inverse function of
$t\mapsto A(t)$ where
$$
A(t)=\frac{\nabla Z(t)}{Z(t)}.
$$

\begin{lemma} The differential $dA(t)$ is non-singular at any point $t\in \mathbb{R}^d$. Hence $\beta \mapsto t_\beta$ is infinitely differentiable.
\end{lemma}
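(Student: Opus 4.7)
The plan is to compute the differential $dA(t)$ explicitly, recognize it as a covariance matrix, and then invoke the inverse function theorem. Writing $p_j(t) = e^{\langle t, X_j\rangle}/Z(t)$ so that $A_i(t) = \sum_{j=1}^m p_j(t)\, X_j^i$, a direct differentiation gives $\partial p_j/\partial t_k = p_j(t)(X_j^k - A_k(t))$, and hence
$$
\bigl(dA(t)\bigr)_{ik} = \sum_{j=1}^m p_j(t)\bigl(X_j^i - A_i(t)\bigr)\bigl(X_j^k - A_k(t)\bigr).
$$
I would then observe that this is precisely the covariance matrix of the $\mathbb{R}^d$-valued random vector $\xi$ taking the value $X_j$ with probability $p_j(t)$; in particular $dA(t)$ is symmetric and positive semi-definite.

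Next, I would check positive definiteness. For any $v \in \mathbb{R}^d \setminus \{0\}$,
$$
v^{T}\, dA(t)\, v \;=\; \mathrm{Var}\bigl(\langle v, \xi\rangle\bigr) \;=\; \sum_{j=1}^m p_j(t)\bigl(\langle v, X_j\rangle - \langle v, A(t)\rangle\bigr)^{2}.
$$
Since every $p_j(t) > 0$, this sum vanishes only when $\langle v, X_j\rangle$ is the same constant for all $j$, i.e.\ when the $X_j$'s all lie on a common affine hyperplane.

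This is the delicate point of the argument and the main obstacle I want to flag: the paper's notion of \emph{non-coplanar} a priori only excludes affine hyperplanes of the form $\langle \eta, x\rangle = 1$, whereas I need to exclude \emph{all} affine hyperplanes (including the linear ones through the origin). The way around this is to exploit the standing assumption that $\beta \in \stackrel{\circ}{\Delta}_X$, which forces $\stackrel{\circ}{\Delta}_X \neq \varnothing$, and via Lemma \ref{lemma5-1} forces the $X_j$'s to affinely span $\mathbb{R}^d$. With this observation in hand, the vanishing above becomes impossible, so $dA(t)$ is positive definite and in particular non-singular.

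Finally, having invertibility of $dA(t)$ at every $t$, the inverse function theorem gives that $A$ is a local $C^{\infty}$-diffeomorphism around each point of $\mathbb{R}^d$. Combined with the global bijectivity of $A : \mathbb{R}^d \to \stackrel{\circ}{\Delta}_X$ already contained in Lemmas \ref{lemma5-1} and \ref{lem-entropy}, this promotes $\beta \mapsto t_\beta$ to a global $C^{\infty}$-diffeomorphism from $\stackrel{\circ}{\Delta}_X$ onto $\mathbb{R}^d$, which is exactly what the statement demands.
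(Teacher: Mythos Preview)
Your proof is correct and follows essentially the same route as the paper: both compute $dA(t)$ as the covariance matrix of the Gibbs-distributed random vector (the paper writes this as $MG(t)M^{T}$ with $G(t)=\mathrm{diag}\,p(t)-p(t)p(t)^{T}$, which is the same formula) and conclude that singularity would force all $\langle v,X_j\rangle$ to coincide, contradicting non-coplanarity. Your treatment is in fact slightly more careful than the paper's on the point you flagged: you explicitly rule out the case where the common value $\langle v,X_j\rangle$ is zero by using $\stackrel{\circ}{\Delta}_X\neq\varnothing$ to force the $X_j$ to affinely span $\mathbb{R}^d$, whereas the paper passes directly from ``$\langle x,X_j\rangle=c$ for all $j$'' to ``coplanar'' without commenting on $c=0$.
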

\begin{proof}
Let $p(t)$ be the probability vector defined by $\nabla Z(t)/Z(t)$. Define the $m\times m$
matrix
$$
G(t)=\text{diag~}(p(t))-p(t) \cdot p(t)^T,
$$
where $\text{diag~}(p(t))$ denotes the diagonal matrix with the elements of $p(t)$
 as diagonal elements and $p(t)^T$ denotes the transpose of the column vector $p(t)$.
A direct calculation shows that
$$
dA(t)=MG(t)M^T.
$$

Observe that $G(t)$ is symmetric and it defines the quadratic form
$$
     yG(t)y^T= \sum_{j=1}^m p_j y_j^2 - \left(\sum_{j=1}^m p_j y_j\right)^2.
$$
If we introduce the inner product $(x,y) = \sum_{j=1}^m p_j x_j y_j$,
by the Cauchy inequality we see that $G(t)$ is positive and
$yG(t)y^T=0$ iff $y$ is parallel to $(1, 1, \cdots, 1)$.

Suppose that $dA(t)$ is singular. Then
$xMG(t)M^Tx^T= 0$ for some $x\in \R^d$ with $x\neq 0$, i.e.
$ yG(t)y^T = 0$ for $y=xM$. By the properties of $G(t)$ proved above,
$xM=c(1,1,\dots,1)$ for some $c\not=0$, which means
$\langle x, X_j\rangle =c$ for all $j$, i.e. $X_1,\dots, X_m$ are coplanar.
This is a contradiction. The infinite differentiability is a consequence of the implicit
function theorem.
\end{proof}

Finally, we consider the case that $X_1,\dots, X_m$ are coplanar.
Let $H_0$ be the subspace spanned by $X_i -X_j$ ($i, j= 1, 2, \cdots, m$).
Then $s:=\dim H_0<d$. We can apply the theorem if we replace $\mathbb{R}^d$
by $H_0$ and $t$ by vectors in $H_0$. Then there is a unique $t_\beta$ in $H_0$
associated to $\beta \in \stackrel{\circ}{C}_X$. We can also consider $t \in \mathbb{R}^d$. Then
consider the orthogonal decomposition $\mathbb{R}^d = H_0\oplus H_0^{\bot}$.
For each $\beta \in \stackrel{\circ}{C}_X$, the solution of the equation $
\nabla Z(t)/Z(t) =\beta$ is the set $t_\beta + H_0^{\bot}$ where $t_\beta \in H_0$.

\section{\textbf{Formula of $\gamma(\theta)$ in the coplanar case}}

In this section, we prove an formula for the growth function $\gamma$ when
$X_1, \cdots, X_m$ are coplanar. Let $\eta$ be a non zero vector in $\mathbb{R}^s$,
considered as the normal direction of a hyperplane.
Recall that  $X_1,\dots, X_m$ are $\eta$-coplanar if
\begin{equation}\label{co-plane condition}
\forall j \in \{1,\dots,m\}, \quad
 \langle X_j, \eta \rangle=1.
\end{equation}
Denote by $H_\eta = \{X\in \mathbb{R}^s: \langle X, \eta\rangle =1\}$ the hyperplane
containing $X_1, \cdots, X_m$. By the discussion of the previous section, we have

\begin{lemma}\label{lem-entropy} If $\beta$ is an interior point  in $H_\eta\cap {\mathbf C}_X$,
then the solution $t$ of the equation \eqref{Eqn-t-beta}
exists and
is unique up to a difference of $c\eta$ with $c\in \R$. Moreover, the entropy function
$
h(p)=-\sum_{j=1}^m p_j\log p_j
$
with the constraint
 \begin{equation}\label{constrain}
p_1X_1+\cdots+p_mX_m=\beta.
\end{equation}
attains it maximum at
\begin{equation}\label{prob vector}
p_j=\frac{e^{\langle t, X_j\rangle}}{Z(t)},
\qquad 1\leq j\leq  m.
\end{equation}
\end{lemma}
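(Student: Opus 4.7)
The plan is to deduce this lemma from the non-coplanar theorem proved in Section 5 by reducing to the hyperplane $H_0$ orthogonal to $\eta$. The coplanarity condition $\langle \eta, X_j\rangle = 1$ for all $j$ gives $\langle \eta, X_i - X_j\rangle = 0$, so $H_0 := \mathrm{span}\{X_i - X_j\} \subset \eta^{\perp}$; since $X_1, \ldots, X_m$ span $\mathbb{R}^s$ and lie on the affine hyperplane $H_\eta$, a dimension count shows $H_0 = \eta^{\perp}$ has dimension $s-1$ and $\mathbb{R}^s = H_0 \oplus \mathbb{R}\eta$. The key observation is that the equation and the Gibbs probability $p_j(t) = e^{\langle t, X_j\rangle}/Z(t)$ are invariant under the translation $t \mapsto t + c\eta$: indeed $\langle t + c\eta, X_j\rangle = \langle t, X_j\rangle + c$ implies $Z(t + c\eta) = e^c Z(t)$ and $\nabla Z(t + c\eta) = e^c \nabla Z(t)$, so the factor $e^c$ cancels in both $\nabla Z/Z$ and in $p_j$.

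To produce a solution, I would fix the basepoint $X_* := \eta/\|\eta\|^2 \in H_\eta$, set $Y_j := X_j - X_* \in H_0$, and apply the theorem of Section 5 with ambient space $H_0$ and the vectors $Y_1,\ldots,Y_m$. By construction $Y_i - Y_j = X_i - X_j$ span $H_0$, so the $Y_j$ are not coplanar in $H_0$. For $t \in H_0$ one has $\langle t, X_j\rangle = \langle t, Y_j\rangle$ and $\nabla Z(t)/Z(t) = X_* + \sum_j p_j(t)\, Y_j$, so the reduced equation becomes $\sum_j p_j(t)\, Y_j = \beta - X_*$ inside $H_0$. Since $H_\eta \cap \mathbf{C}_X = \Delta_X$, the hypothesis that $\beta$ is interior in $H_\eta \cap \mathbf{C}_X$ translates, via the affine isomorphism $X \mapsto X - X_*$ from $H_\eta$ onto $H_0$, to $\beta - X_*$ lying in the interior of $\Delta_Y := \mathrm{conv}(Y_1,\ldots,Y_m) \subset H_0$. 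The theorem of Section 5 then yields a unique $t_\beta \in H_0$ solving the reduced equation, and simultaneously gives $p_j^* = e^{\langle t_\beta, Y_j\rangle}/Z(t_\beta)$ as the unique maximizer of $h(p)$ under the constraints $\sum p_j = 1$ and $\sum_j p_j Y_j = \beta - X_*$.

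Combining with the first paragraph, every solution $t \in \mathbb{R}^s$ of $\nabla Z(t)/Z(t) = \beta$ decomposes uniquely as $t = t_\beta + c\eta$ with $c \in \mathbb{R}$, which proves existence and uniqueness up to a $c\eta$ term. The pair of constraints $\sum p_j = 1$ and $\sum_j p_j Y_j = \beta - X_*$ is equivalent to $\sum p_j = 1$ together with \eqref{constrain}, and by the translation invariance $p_j^* = e^{\langle t_\beta, Y_j\rangle}/Z(t_\beta) = e^{\langle t, X_j\rangle}/Z(t)$ for every $t = t_\beta + c\eta$ in the solution set, which is exactly formula \eqref{prob vector}. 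The main obstacle I anticipate is the bookkeeping for the reduction: verifying the dimension count $\dim H_0 = s-1$, the non-coplanarity of $Y_1,\ldots,Y_m$ in $H_0$, and the identification of ``interior in $H_\eta$'' with ``interior in $H_0$'' under the translation by $X_*$; once these geometric checks are in place, the result is an essentially mechanical transfer of the already-proved non-coplanar theorem of Section 5.
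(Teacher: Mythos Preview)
Your proposal is correct and follows essentially the same approach as the paper. The paper does not give a separate proof of this lemma in Section~6; it simply points to the closing paragraph of Section~5, where the coplanar case is handled by restricting $t$ to the subspace $H_0=\mathrm{span}\{X_i-X_j\}$, applying the non-coplanar theorem there, and observing that the full solution set in $\mathbb{R}^s$ is $t_\beta+H_0^\perp$. Your write-up makes this reduction explicit via the translation $Y_j=X_j-X_*$ and checks the bookkeeping (dimension of $H_0$, non-coplanarity of the $Y_j$ in $H_0$, identification $H_\eta\cap\mathbf{C}_X=\Delta_X$) that the paper leaves to the reader.
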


%


The function $Z(t)$ is conventionally called \emph{partition function} and the probability given
by \eqref{prob vector} is called \emph{Gibbs distribution}.


The vector $t$ depends on $\beta$. In the following, we will consider $\beta=\frac{\theta}{\langle \theta,\eta \rangle}$, so $t$ will depends on $\theta$.

\medskip
\noindent \textbf{Proof of Theorem \ref{thm-entropy}.} For a fixed unit vector $\theta$
in the cone ${\bf C}_{X}$, put $\displaystyle \beta=\frac{\theta}{\langle \theta,\eta \rangle}$,
which is the point on the hyperplane $H_\eta$ of the direction $\theta$.

{\textbf{Lower bound of $\gamma(\theta)$.}}
Take any probability vector $p$ satisfying the constraint \eqref{constrain}.
Let
\begin{equation}\label{n-prime}
n_j=\lfloor np_j \rfloor, \text{ for } j=1,\dots, m-1 \text{ and }n_m=n-(n_1+\cdots+n_{m-1}).
\end{equation}
It is clear that there exists  a constant $c$ independent of $n$ (for example, $c=3m\sum_{j=1}^m\|X_j\|$) such that
\begin{equation}\label{nn-prime}
\left \|\sum_{j=1}^m n_j X_j-n \beta \right \|\le c, \qquad \sum_{j=1}^m |n_j-np_j|\le c.
\end{equation}
So, applying  Theorem \ref{Q(x)} with $C_0=2c$, we have
$$
Q(n\|\beta\|) \m(n\beta)\geq \m\left (\sum_{j=1}^m n_j X_j \right )\ge \frac{n!}{n_1!\dots n_m!}\sim \frac{n!}{(np_1)!\dots (np_m)!},
$$
where $Q(x)$ is the polynomial in Theorem \ref{Q(x)}, and $A\sim B$ means that $A/B$ and $B/A$ are bounded by a polynomial of $n$.
Using Stirling's formula, we obtain that
$$
   \lim_{n\to \infty} \frac{ \log \m(n\beta)}{n}
 \geq   \log \frac{1}{p_1^{p_1}\cdots {p_m}^{p_m}}=
h(p).
$$
Therefore
\begin{eqnarray*}
\gamma(\theta) &=& \lim_{k\to \infty} \frac{ \log \m(k\theta)}{k} =\langle \theta, \eta\rangle
\lim_{n\to \infty} \frac{ \log \m(n\beta)}{n}\geq
\langle \theta , \eta \rangle h(p).
\end{eqnarray*}
 Taking the supremum we get the following lower bound for $\gamma(\theta)$
 $$
\gamma(\theta) \geq  \langle \theta , \eta \rangle  \sup \left \{ h(p): ~ p_1X_1+\cdots+p_mX_m=\beta \right \}.
$$


\indent {\textbf{Upper bound of $\gamma(\theta)$.}}
Let $p$ be a probability vector such that $h(p)$ attains maximum under the restriction \eqref{constrain}.
Let $(n_1,\dots, n_m)$ be the vector defined by \eqref{n-prime}.
Let $x_n=n_1X_1+\cdots+n_mX_m$. Then $|x_n-n\beta|\le c$.
Since $p$ is the Gibbs distribution which is of exponential form (see \eqref{prob vector}), for any $(n_1',\dots, n_m')$ satisfying
$n_1'X_1+\cdots+n_m'X_m=x_n$, we have
$$
p_1^{n_1'}\cdots p_m^{n_m'}=\frac{\exp\langle t, n_1'X_1+\cdots+n_m'X_m\rangle}{Z(t)^n}=\frac{\exp\langle t, x_n\rangle}{Z(t)^n}.
$$

Now we consider a random walk: at any time, we  forward   the step $X_j$ with the probability $p_j$.
Then the above formula says that for any $\omega, \omega'\in \{1, 2, \cdots, m\}^n$, as soon as $\kappa(\omega)=\kappa(\omega')$, both $\omega$ and $\omega'$ have the same probability.
It follows that
$
 Z(t)^{-n} \exp\langle t, x_n\rangle \m(x_n)
 $ is bounded by the  probability that we  arrive at $x_n$ at time $n$, which is bounded by $1$.
Hence,
$$
\m(x_n)\leq \frac{Z(t)^n}{\exp\langle t, x_n\rangle }=\frac{1}{p_1^{n_1}\cdots p_m^{n_m}}.
$$
As $n_j/n\to p_j$, we have
$$
\lim_{n\to \infty} \frac{\log \m(x_n)}{n}\leq h(p).
$$
Finally, since $ \m(n\beta)/\m(x_n)$ is controlled by a polynomial of $n$, we obtain
$$
\gamma(\theta)= \langle \theta , \eta \rangle \lim_{n\to \infty} \frac{\log \m(n\beta)}{n}\leq \langle \theta , \eta \rangle h(p).
$$
This ends the proof of the  theorem. $\Box$

\medskip

\begin{theorem}\label{thm-Z} If $X_1,\dots, X_m$ are $\eta$-coplanar, then for any unit vector $\theta$
in  the interior of ${\bf C}_{X}$ we have
$$
    \gamma(\theta) =\langle \theta , \eta \rangle \log Z(t)-  \langle t, \theta \rangle
$$
where   $t$ is any solution stated in Lemma \ref{lem-entropy}.
\end{theorem}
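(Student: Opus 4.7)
The plan is to deduce the formula directly from Theorem \ref{thm-entropy} by computing explicitly the entropy of the maximizing Gibbs distribution supplied by Lemma \ref{lem-entropy}.

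First, put $\beta = \theta/\langle \theta,\eta\rangle$, which is the point where the ray through $\theta$ meets the hyperplane $H_\eta$. Since $\theta$ lies in the interior of ${\bf C}_X$, the point $\beta$ lies in the interior of $H_\eta\cap {\bf C}_X$, so Lemma \ref{lem-entropy} applies: the entropy $h(p)$ under the constraint $\sum p_j X_j = \beta$ is maximized at the Gibbs distribution $p_j^* = e^{\langle t, X_j\rangle}/Z(t)$ for any solution $t$ of $\nabla Z(t)/Z(t) = \beta$.

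Next, I would compute the maximal entropy. Using $\log p_j^* = \langle t, X_j\rangle - \log Z(t)$, one gets
\begin{equation*}
h(p^*) = -\sum_{j=1}^m p_j^* \log p_j^* = \log Z(t) - \Big\langle t, \sum_{j=1}^m p_j^* X_j\Big\rangle = \log Z(t) - \langle t,\beta\rangle.
\end{equation*}
Substituting this into the formula of Theorem \ref{thm-entropy} yields
\begin{equation*}
\gamma(\theta) = \langle \theta,\eta\rangle \bigl(\log Z(t) - \langle t,\beta\rangle\bigr) = \langle\theta,\eta\rangle \log Z(t) - \langle \theta,\eta\rangle\,\langle t,\beta\rangle,
\end{equation*}
and since $\langle \theta,\eta\rangle\,\langle t,\beta\rangle = \langle t,\langle\theta,\eta\rangle\beta\rangle = \langle t,\theta\rangle$, the announced identity follows.

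Finally, I would verify that the right-hand side does not depend on the particular choice of $t$ among the affine family $t_\beta + \mathbb{R}\eta$ guaranteed by Lemma \ref{lem-entropy}. If $t' = t + c\eta$ with $c\in\mathbb{R}$, then using the coplanarity $\langle X_j,\eta\rangle = 1$ we have $Z(t') = e^c Z(t)$, hence $\log Z(t') = \log Z(t) + c$; meanwhile $\langle t',\theta\rangle = \langle t,\theta\rangle + c\langle \eta,\theta\rangle$, so both the shift $c\langle \theta,\eta\rangle$ in the first term and the shift $c\langle \eta,\theta\rangle$ in the second term cancel. There is no substantive obstacle here: once Theorem \ref{thm-entropy} is granted together with the identification of the maximizer in Lemma \ref{lem-entropy}, the result is a one-line computation plus this consistency check.
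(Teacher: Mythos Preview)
Your proof is correct and follows essentially the same approach as the paper: both compute the entropy of the Gibbs maximizer $p_j^* = e^{\langle t, X_j\rangle}/Z(t)$ to obtain $h(p^*) = \log Z(t) - \langle t,\beta\rangle$, and then multiply through by $\langle\theta,\eta\rangle$ via Theorem~\ref{thm-entropy}. Your additional verification that the formula is independent of the choice of $t$ in the affine family $t_\beta + \mathbb{R}\eta$ is a useful check that the paper leaves implicit.
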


\begin{proof} As we have seen in the above proof of Theorem \ref{thm-entropy}, the supremum is attained at the
Gibbs distribution $p_j = e^{\langle t, X_j\rangle} Z(t)^{-1}$. Taking the logarithm,
we get
$$
    - \log p_j = \log Z(t) - \langle t, X_j\rangle.
$$
Multiplying both sides by $p_j$ and summing over $j$ allow us to get
$$
    h(p) = \log Z(t) -\sum_{j=1}^m p_j \langle t, X_j\rangle = \log Z(t) - \langle t, \beta\rangle
$$
where $\beta = \theta/\langle \theta, \eta \rangle$.
Finally, we obtain the formula by multiplying $\langle \theta, \eta \rangle$.
\end{proof}

The following proposition is an easy consequence of Theorem \ref{thm-entropy}.

\begin{pro}\label{lem-max} Assume that $X_1,\dots,X_m$ are $\eta$-coplanar. Then
$$\max \frac{\gamma(\theta)}{\langle \eta, \theta \rangle }= \log m  \text{ at }\theta_0=\frac{\sum_{j=1}^m X_j}{|\sum_{j=1}^m X_j|}.
$$
\end{pro}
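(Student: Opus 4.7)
The plan is to read the result directly out of Theorem \ref{thm-entropy}, using the elementary fact that the entropy of a probability vector on $m$ symbols is at most $\log m$, with equality only for the uniform distribution.

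By Theorem \ref{thm-entropy}, for every unit vector $\theta \in {\bf C}_X$ we have
\[
\frac{\gamma(\theta)}{\langle \eta, \theta\rangle} \;=\; \sup\left\{h(p):\ p\in \Delta_m,\ \sum_{j=1}^m p_j X_j = \frac{\theta}{\langle \eta,\theta\rangle}\right\}.
\]
Since $h(p)\le \log m$ for any $p\in \Delta_m$, the right-hand side is bounded above by $\log m$, proving the upper bound of the proposition uniformly in $\theta$.

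For attainment at $\theta_0 = \sum_j X_j / \bigl|\sum_j X_j\bigr|$, I would simply plug in the uniform distribution $p^u = (1/m,\dots,1/m)$. A direct calculation gives
\[
\langle \eta, \theta_0\rangle \;=\; \frac{\langle \eta, \sum_j X_j\rangle}{|\sum_j X_j|} \;=\; \frac{m}{|\sum_j X_j|}, \qquad \frac{\theta_0}{\langle\eta,\theta_0\rangle} \;=\; \frac{1}{m}\sum_{j=1}^m X_j \;=\; \sum_{j=1}^m p^u_j X_j,
\]
so $p^u$ is feasible for the direction $\theta_0$ and $h(p^u)=\log m$, yielding $\gamma(\theta_0)/\langle \eta, \theta_0\rangle = \log m$.

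There is no real obstacle; the only thing to note is that because $h$ attains its maximum $\log m$ uniquely at $p^u$, any $\theta$ achieving the maximum must satisfy $\theta/\langle \eta, \theta\rangle = \frac{1}{m}\sum_j X_j$, which, combined with $\|\theta\|=1$ and $\theta \in {\bf C}_X$, forces $\theta=\theta_0$. Thus the maximum is attained precisely at $\theta_0$.
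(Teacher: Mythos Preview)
Your proof is correct and follows exactly the same approach as the paper: both apply Theorem \ref{thm-entropy} together with the elementary fact that $h(p)\le \log m$ with equality only at the uniform distribution, and identify the direction $\theta_0$ via $\beta_0=\frac{1}{m}\sum_j X_j$. Your write-up is simply a more detailed version of the paper's two-line justification.
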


The reason is that the entropy $h(p)$ attains its maximum $\log m$ at $p_1=\cdots=p_m=1/m$.
The corresponding direction on the hyperplane $H_\eta$ is $\beta_0=\frac{1}{m} \sum_{j=1}^m X_j$, the arithmetic average of $X_1, X_2, \cdots, X_m$. The corresponding unit vector is $\theta_0$.

The growth of the semi-group has been defined as function of the unit vector $\theta$.
If we define the growth $\tilde{\gamma}(\beta)$ as function of the vector $\beta$ located on the
hyperplane $H_\eta$,  we will get a simpler formula
$$
    \tilde{\gamma}(\beta) = \sup \{h(p): p_1 X_1 + \cdots + p_m X_m = \beta\}
    = \log Z(t) - \langle t, \beta\rangle.
$$
This is the conditional variation principle for the multifractal analysis of the Birkhoff
average
\begin{equation}\label{BA}
    \lim_{n\to \infty} \frac{X_{\omega_1}+ X_{\omega_2}+ \cdots + X_{\omega_n}}{n}.
\end{equation}
See \cite{FF, FFW,FLP2008} for discussion in more general case. In general, it is difficult to
determine exactly the possible limits of  the Birkhoff
average. Theorem \ref{thm-entropy} shows that for the special case of (\ref{BA}), the
possible limit is the convex set $H_\eta \cap {\bf C}_{X}$. 

\section{\bf Rigidity (I): Proof of Theorem \ref{thm-XY}}

When the vectors $X_1, \cdots, X_m$ are $\eta$-coplanar, we have proved that
the growth function is equal to
$$
\gamma (\theta) = \langle \theta, \eta\rangle \left ( \log Z(t) - \langle \beta, t\rangle\right )
$$
where $\beta = \frac{\theta}{\langle \theta, \eta\rangle}$ and $t$ is any solution of
the equation $\nabla Z(t)/Z(t)=\beta$. The set of solutions of $t$ is a line
consisting of the points $t_\beta + c \eta$ ($c\in \mathbb{R}$)
and we will choose the one such that the last coordinate of $t_\beta + c \eta$ is zero. It is really
possible that the last coordinate  is zero if $\eta_s\neq 0$. In fact, we can choose
$$
     c = -\frac{\langle t_\beta, e_s\rangle}{\langle \eta, {\mathbf e}_s\rangle}
$$
where ${\mathbf e}_s = (0, ..., 0, 1) \in \mathbb{R}^s$.
The next lemma shows that it is possible to convert the general case to the
case with $\eta_s= : \langle \eta, e_s\rangle \not =0$.

\begin{lemma}\label{lem-tra} Let $X=\{X_1,\dots, X_m\}$ be a set of integral vectors on $\mathbb{Z}^s$ and let $T$ be an invertible $s\times s$ matrix with integral entries.\\
 \indent {\rm (i)} The function $\gamma_{_{TX}}$ is related to $\gamma_{_X}$ by the formula
 $$
 \gamma_{_{TX}}( \theta)=\|T^{-1}\theta\|\gamma_{_X}\left (\frac{T^{-1}\theta}{\|T^{-1}\theta\|}\right ).
 $$
\indent
 {\rm (ii)}  If $X$ is $\eta$-coplanar, then $TX$ is $(T^*)^{-1}\eta$-coplanar, where $T^*$ is the transpose of $T$.
 \end{lemma}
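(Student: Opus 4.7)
The plan is to dispatch (ii) first as a short algebraic verification, and then exploit the linear equivariance of the walk $\kappa$ under $T$ for (i).

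For (ii), using that $(T^*)^{-1} = (T^{-1})^{*}$ together with the adjoint relation $\langle A^{*} u, v\rangle = \langle u, A v\rangle$, one computes
$$
\langle (T^*)^{-1}\eta,\, TX_j\rangle = \langle \eta,\, T^{-1}(TX_j)\rangle = \langle \eta, X_j\rangle = 1
$$
for every $j$, so $TX$ is $(T^*)^{-1}\eta$-coplanar; integrality of $TX_j$ and the half-space condition for $TX$ follow from $T\in M_s(\mathbb{Z})$ being invertible, which also ensures that $TX$ spans $\mathbb{R}^s$.

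For (i), the key observation is that $\kappa_{TX}(\omega) = T\kappa_X(\omega)$ for every $\omega \in \Sigma_m^*$, by linearity of $T$. Hence the representation sets are in canonical bijection, $A_{TX}(y) = A_X(T^{-1}y)$, giving $\mathbf{m}_{TX}(y) = \mathbf{m}_X(T^{-1}y)$ for every attainable $y \in T\mathcal{J}_X = \mathcal{J}_{TX}$. For points of the cone not in the semi-group, the definition of $\mathbf{m}$ involves nearest attainable points; since $T$ is a bi-Lipschitz linear map with constants $\|T\|, \|T^{-1}\|$, the images of nearest points on the $X$-side sit within a bounded distance of the true nearest points on the $TX$-side, and Theorem \ref{Q(x)} guarantees that $\mathbf{m}$ varies only polynomially under such bounded perturbations. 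Combining these two facts yields $\log \mathbf{m}_{TX}(k\theta) = \log \mathbf{m}_X(kT^{-1}\theta) + O(\log k)$.

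Writing $kT^{-1}\theta = (k\|T^{-1}\theta\|)\,\theta'$ with $\theta' = T^{-1}\theta/\|T^{-1}\theta\|$, and setting $k' := \lfloor k\|T^{-1}\theta\|\rfloor$, a further application of Theorem \ref{Q(x)} lets us replace the real scalar $k\|T^{-1}\theta\|$ by the integer $k'$ at a polynomial cost. Therefore
$$
\gamma_{TX}(\theta) = \lim_{k\to \infty}\frac{\log \mathbf{m}_{TX}(k\theta)}{k} = \|T^{-1}\theta\|\,\lim_{k'\to\infty}\frac{\log \mathbf{m}_X(k'\theta')}{k'} = \|T^{-1}\theta\|\,\gamma_X(\theta'),
$$
which is exactly the claimed formula. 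The only real (and modest) obstacle is the bookkeeping required to pass between integer multiples of $\theta$ on the $TX$-side and integer multiples of $\theta'$ on the $X$-side, since $\|T^{-1}\theta\|$ is generally irrational and points of the form $kT^{-1}\theta$ need not be lattice points; but this is handled cleanly by the polynomial slow-variation bound of Theorem \ref{Q(x)}, whose logarithm is absorbed in the $1/k$ normalisation.
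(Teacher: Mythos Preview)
Your proof is correct and follows the same approach as the paper: both rest on the observation $\mathbf{m}_{TX}(z)=\mathbf{m}_X(T^{-1}z)$ for attainable $z$ and then rescale by $\|T^{-1}\theta\|$. Your version is in fact more careful than the paper's, which writes $\mathbf{m}_{TX}(z)=\mathbf{m}_X(T^{-1}z)$ as if it held for all cone points and passes directly to the limit; you correctly note that this identity is exact only on $\mathcal{J}_{TX}$, and you supply the missing bookkeeping (bi-Lipschitz distortion of nearest points, and the passage from the real parameter $k\|T^{-1}\theta\|$ to an integer) via Theorem~\ref{Q(x)}, which is exactly the right tool.
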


 \begin{proof} (ii) is obvious. (i) is proved by computation. The key point is the observation $\m_{TX}(z)=\m_X(T^{-1}z)$. Then we have
 \begin{eqnarray*}
 \gamma_{_{TX}}( \theta)& = & \lim_{k\to \infty} \frac{\m_{TX}(k\theta)}{k}=
  \lim_{k\to \infty} \frac{\m_{X}(kT^{-1}\theta)}{k} \\
  & =&  \lim_{k\to \infty} \frac{\m_{X}(kT^{-1}\theta)}{\|kT^{-1}\theta\|} \cdot \frac{\|kT^{-1}\theta\|}{k}
  = \|T^{-1}\theta\|\gamma_{_X}\left (\frac{T^{-1}\theta}{\|T^{-1}\theta\|}\right ).
 \end{eqnarray*}
 \end{proof}

Theorem \ref{thm-XY} and Theorem \ref{thm-XY-new} compare two sets of vectors
 $X=\{X_1, \cdots, X_m\}$ and $Y=\{Y_1, \cdots, Y_{m'}\}$, which are respectively
   $\eta$-coplanar and $\eta'$-coplanar. Without loss of generality,  we may assume that
$\eta_s\neq 0$ and $\eta'_s\neq 0$. Otherwise we can consider the images of $X$ and $Y$
under a linear transformation $T$ as stated in the last lemma.
Actually, we may choose $a= (a_1, \cdots, a_{s-1}, 1) \in \mathbb{Z}^s$ such that
$$
    \langle a, \eta\rangle \not= 0, \quad \langle a, \eta'\rangle\not=0.
$$
Such $a$'s do exist, because the condition $\langle z, \eta\rangle = 0$ or $\langle z, \eta'\rangle = 0$
defines a union of two hyperplanes and we can find points $a$ outside these two hyperplanes. Define
$$
     T = \left(
           \begin{array}{ccccc}
             1 &   &        &  & \ -a_1 \\
               &\ 1 \ &        &  & -a_2 \\
               &   & \ \ddots \ &  & \vdots \\
               &   &        & \ 1 \  & -a_{s-1} \\
               &   &        &  & 1 \\
           \end{array}
         \right).
$$
Then we have the last coordinate of $T^{*-1} \eta$ is equal to $\langle a, \eta\rangle \not=0$ and
the last coordinate of
$T^{*-1} \eta'$ is equal to $\langle a, \eta'\rangle \not=0$.

From now on, we assume that $X_1,\dots, X_m$ are $\eta$-coplanar vectors in $\Z^s$ such that
$
\eta_s\neq 0
$
and that there is a (unique) solution $t$ of \eqref{Eqn-t-beta}, i.e.
  \begin{equation}\label{constrain-new}
   \sum_{j=1}^m (X_j-\beta) e^{\langle t, X_j \rangle}=0
\end{equation}
such that $t_s=0$. We define
$$
F(\beta)=\log Z(t)-\langle \beta, t \rangle,
$$
which  is a function on $\beta_1,\dots, \beta_{s-1}$ since $t_s=0$.
The variables $\beta_1, \cdots, \beta_{s-1}$ are independent  and
$(t_1, \cdots, t_{s-1})$ is a $C^\infty$ map of $(\beta_1, \cdots, \beta_{s-1})$.
 Notice that
$F(\beta)=\langle \eta,  \theta \rangle^{-1} \gamma(\theta).$

\begin{lemma}\label{lem-key}
 Let $X_1,\dots,X_m$ be $\eta$-coplanar vectors and $\eta_s\neq 0$. Then
\\
\indent {\rm
(i) }\  $\displaystyle \frac{\partial F}{\partial \beta_j}=-t_j$ for $j=1,\dots, s-1$.
\\
\indent
{\rm (ii)} \  $\displaystyle \sum_{j=1}^{s-1}\frac{\partial \gamma}{\partial \theta_j}\theta_j=\gamma(\theta)-\frac{\eta_s}{\theta_s}\log Z(t).$
\end{lemma}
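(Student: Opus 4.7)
The plan is to prove both parts by direct chain-rule computation, using the ``envelope'' identity $\nabla \log Z(t) = \beta$ that characterizes the maximizer $t=t_\beta$.

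For (i), I would treat $t_1,\ldots,t_{s-1}$ as smooth functions of $\beta_1,\ldots,\beta_{s-1}$ (with $t_s\equiv 0$) and expand $\partial F/\partial\beta_j$ using the chain rule on both terms of $F=\log Z(t)-\langle\beta,t\rangle$. The key point is that the defining equation \eqref{constrain-new} is exactly $\partial \log Z/\partial t_k=\beta_k$; hence the contribution $\sum_k(\partial\log Z/\partial t_k)\,\partial t_k/\partial\beta_j$ coming from the $\log Z$ piece cancels the contribution $\sum_k\beta_k\,\partial t_k/\partial\beta_j$ coming from $\langle\beta,t\rangle$, leaving only the ``direct'' term $-t_j$. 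This is the standard Legendre-transform identity.

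For (ii), I would start from the explicit formula of Theorem \ref{thm-Z},
\[
\gamma(\theta)=\langle \eta,\theta\rangle\log Z(t)-\langle t,\theta\rangle,
\]
regard $\theta$ as a point on the unit sphere with $\theta_s=(1-\theta_1^2-\cdots-\theta_{s-1}^2)^{1/2}$, and use that $t$ depends on $\theta$ only through $\beta=\theta/\langle\eta,\theta\rangle$. Since $\langle\eta,\theta\rangle\,\beta_k=\theta_k$, the same envelope cancellation as in (i) wipes out every Jacobian term $\partial t_k/\partial\theta_j$; combined with $\partial\theta_s/\partial\theta_j=-\theta_j/\theta_s$ and $t_s=0$, this yields
\[
\frac{\partial\gamma}{\partial\theta_j}=\Bigl(\eta_j-\frac{\eta_s\theta_j}{\theta_s}\Bigr)\log Z(t)-t_j\qquad(j=1,\ldots,s-1).
\]

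Multiplying by $\theta_j$ and summing over $j=1,\ldots,s-1$, the identity $\sum_{j<s}\theta_j^2=1-\theta_s^2$ collapses the coefficient of $\log Z(t)$ to $(\langle\eta,\theta\rangle-\eta_s\theta_s)-\eta_s(1-\theta_s^2)/\theta_s=\langle\eta,\theta\rangle-\eta_s/\theta_s$, while the remaining terms assemble into $-\langle t,\theta\rangle$ (again because $t_s=0$). Recognising $\langle\eta,\theta\rangle\log Z(t)-\langle t,\theta\rangle=\gamma(\theta)$ produces the announced identity. I expect the only real obstacle to be bookkeeping: keeping straight which variables are independent ($\theta_1,\ldots,\theta_{s-1}$), which are tied by the sphere constraint ($\theta_s$), and which are determined implicitly by the Gibbs equation (the components of $t$). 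The smoothness of $t_\beta$ in $\beta$ is guaranteed by the implicit function argument of the previous section, so every chain-rule step is legitimate.
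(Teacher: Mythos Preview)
Your proposal is correct and follows essentially the same route as the paper: a chain-rule computation in which the envelope identity $\partial(\log Z)/\partial t_k=\beta_k$ annihilates every term containing a Jacobian $\partial t_k/\partial(\cdot)$, leaving only the direct terms. The paper organizes (ii) slightly differently---it writes $\gamma(\theta)=\langle\eta,\theta\rangle\,F(\beta)$ and invokes the already-proved (i) to replace $\partial F/\partial\beta_k$ by $-t_k$, whereas you differentiate the expanded expression $\gamma=\langle\eta,\theta\rangle\log Z(t)-\langle t,\theta\rangle$ directly---but both paths arrive at the same intermediate formula $\partial\gamma/\partial\theta_j=(\eta_j-\eta_s\theta_j/\theta_s)\log Z(t)-t_j$ and finish identically.
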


\begin{proof} (i) Using the chain rule of derivation and  the relation \eqref{constrain-new}, we have
$$
\frac{\partial F}{\partial \beta_j}
  =    Z^{-1}(t)\sum_{\ell=1}^m e^{\langle t, X_\ell \rangle}\sum_{k=1}^{s-1}X_{\ell,k}\frac{\partial t_k}{\partial \beta_j}
 +
\left (-t_j-\sum_{k=1}^{s-1} \beta_k\frac{\partial t_k}{\partial \beta_j}\right )
=- t_j .
$$

(ii) Let us denote $f(\theta)=\langle \eta, \theta \rangle$. Then $\beta=\theta/f(\theta)$ and $\gamma(\theta)=f(\theta)F(\beta)$. By the chain rule of differentiation, we have (${\mathbf e}_j$ denotes the $j$-th element of the canonical basis)
\begin{eqnarray*}
\frac{\partial \gamma}{\partial \theta_j}& = &
\frac{\partial f}{\partial \theta_j} F(\beta)+f\cdot \left (\frac{\partial F}{\partial \beta_1},\dots, \frac{\partial F}{\partial \beta_{s-1}}\right )\times \left (-\frac{\partial f/\partial \theta_j}{f^2}
\left ( \begin{array}{c} \theta_1 \\ \vdots \\ \theta_{s-1}\end{array}\right )
+\frac{1}{f} {\mathbf e}_j
 \right )\\
 &=&  \frac{\partial f}{\partial \theta_j} \left (F(\beta)+\frac{\langle t, \theta \rangle}{f} \right )-t_j\\
 &=& \frac{\partial f}{\partial \theta_j}\log Z(t)-t_j, 
\end{eqnarray*}
where $\times$ stands for the matrix product.
Since $\theta_s^2=1-\sum_{j=1}^{s-1}\theta_j^2$, we have $\displaystyle \frac{\partial f}{\partial \theta_j}=\eta_j-\frac{\eta_s}{\theta_s}\theta_j$,
and hence
\begin{equation}\label{diff}
\displaystyle \frac{\partial \gamma}{\partial \theta_j}=
\left (\eta_j-\frac{\eta_s}{\theta_s}\theta_j\right )\log Z(t)-t_j, \quad j=1,\dots, s-1.
\end{equation}
Therefore
$$\sum_{j=1}^{s-1} \frac{\partial \gamma(\theta)}{\partial \theta_j}\theta_j
=\left (\langle \eta,\theta \rangle -\frac{\eta_s}{\theta_s}\right )\log Z(t)-\langle t,\theta \rangle
=\gamma(\theta)-\frac{\eta_s}{\theta_s}\log Z(t).$$
\end{proof}

\medskip

\noindent \textbf{Proof of Theorem \ref{thm-XY}.}   According to Lemma \ref{lem-tra} and the discussion
just before Lemma \ref{lem-tra},  we may assume without loss of generality that $\eta_s\neq 0$ and $\eta'_s\neq 0$. Otherwise, we consider
$TX$ and $TY$ for some suitable integral   invertible transformation $T$.

First, we claim that $m=m'$. By Proposition \ref{lem-max}, the functions
$\langle \eta,\theta \rangle ^{-1}\gamma_X$ and $\langle \eta,\theta \rangle ^{-1}\gamma_Y$  attain
their respective maximum $\log m$ and $\log m'$.  As $\gamma_X$ and $\gamma_Y$ are the same function, we get
$\log m=\log m'$,  so that $m=m'$.

Next, applying Lemma \ref{lem-key} (i) to $X$, we get
\begin{equation}\label{eq-t1}
 t_j=- \partial   F/\partial \beta_j, \quad j=1,\dots, s-1.
\end{equation}
Similarly, we get
\begin{equation}\label{eq-t2}
\tilde t_j=- \partial \tilde F/\partial \beta_j, \quad j=1,\dots, s-1
\end{equation}
where $$
\tilde F(\beta)= \log \tilde Z(\tilde t)-\langle \beta, \tilde t \rangle,
\qquad \tilde Z(\tilde t)=\sum_{j=1}^m e^{\langle \tilde t, Y_j\rangle}
$$
and  $\tilde t$   is the solution of
$\nabla \tilde Z(t)/\tilde Z(t) = \beta$ such that $\tilde t_s=0$.

Since   $\gamma_X(\theta)=\gamma_Y(\theta)$,
$F(\beta)=\tilde F(\beta)$
so that
 $t=\tilde t$  
 by \eqref{eq-t1} and \eqref{eq-t2}.
 Therefore
$Z(t)=\tilde Z(t)$, \textit{i.e.},
$$
\sum_{j=1}^m e^{\langle t, X_j\rangle}=\sum_{j=1}^m e^{\langle t, Y_j\rangle}.
$$
In this equality, $t$ is a function of  $\widehat{\beta} = (\beta_1, \cdots, \beta_{s-1})$
and $\widehat{\beta}$ varies in a open set of $\mathbb{R}^{s-1}$.
The function $\widehat{\beta} \mapsto t$ is actually a diffeomorphism. So, $t$ varies in a open set $U$ of $\mathbb{R}^{s-1}$.


Consider the polynomial
$$
P(z_1, \cdots, z_{s-1}):=
\sum_{j=1}^m z^{\widehat{X}_j} - \sum_{j=1}^m z^{\widehat{Y}_j}= \sum_{j=1}^m \prod_{k=1}^{s-1} z_k^{X_{j,k}}- \sum_{j=1}^m \prod_{k=1}^{s-1} z_k^{Y_{j,k}}
$$
where $z=(z_1, \cdots, z_{s-1})$ and $\widehat{X}_j = (X_{j, 1}, \cdots, X_{j, s-1})$.
The above proved equality $Z(t) =\tilde Z(t)$ means that $P(z_1, \cdots, z_{s-1})=0$
when $z_k=e^{t_k}$  for $k=1,\dots, s-1$. The polynomials
$\sum_{j=1}^m z^{\widehat{X}_j}$ and $\sum_{j=1}^m z^{\widehat{Y}_j}$ being equal in the open set
 $\{(e^{t_1},\dots, e^{t_{s-1}}): (t_1, \cdots, t_{s-1})\in U\}$, the exponents $\{\widehat{X}_1,\dots, \widehat{X}_m\}$ must be a permutation of the exponents $\{\widehat{Y}_1,\dots, \widehat{Y}_m\}$.
 Therefore $\{X_1, \cdots, X_m\}$ is a permutation of
 $\{Y_1, \cdots, Y_m\}$.
$\Box$

\begin{remark}
Here is an another argument for proving the equality  $Z(t) =\tilde Z(t)$. Notice that $F$
and $\tilde F$ are Legendre transforms of the convex functions $\log N$ and $\log \tilde N$.
Then  $\log N$ and $\log \tilde N$ are the Legendre transforms of $F$ and $\tilde F$.
Since $F=\tilde F$, we get $\log N = \log \tilde N$ so that $N = \tilde N$.

\end{remark}

\medskip

\section{\bf Rigidity (II): Proof of Theorem \ref{thm-XY-new}}

It is assumed that ${\mathbf C}_X={\mathbf C}_Y$.
We shall denote the common cone by ${\mathbf C}$.

If we do not have the information that $\eta'$ is a multiple of $\eta$,  it may happen that
$t\neq \tilde t$ where $t$ and $\tilde t$ correspond to the same $\beta$ (see the last section for notation).
We will use another correspondence between $\beta$ and the solutions $t$
of $\nabla Z(t)/Z(t) = \beta$.


\subsection{Standard solution}
We call a solution $t$ in Lemma \ref{lem-entropy} a \emph{standard solution} if $Z(t)=1$.

\begin{lemma} 
 If $t$ is a solution in Lemma \ref{lem-entropy}, then  the standard solution is
$$t'=t-\left (\log Z(t)\right )\eta.$$

\end{lemma}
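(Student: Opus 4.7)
The plan is to verify the two defining conditions for a standard solution directly, using only the $\eta$-coplanarity hypothesis $\langle X_j,\eta\rangle=1$ and the structure of the solution set from Lemma \ref{lem-entropy}.

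First I would argue that $t'$ is still a solution of the equation $\nabla Z(t)/Z(t) = \beta$. This is immediate from Lemma \ref{lem-entropy}, which asserts that solutions of that equation are unique up to the addition of a multiple of $\eta$. Since $t' = t + c\eta$ with $c = -\log Z(t) \in \mathbb{R}$, it automatically lies in the same solution line as $t$.

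Next I would compute $Z(t')$ by a direct substitution. Using $\langle X_j,\eta\rangle = 1$ we get
\[
\langle t', X_j\rangle = \langle t, X_j\rangle - (\log Z(t))\,\langle \eta, X_j\rangle = \langle t, X_j\rangle - \log Z(t),
\]
so that $e^{\langle t', X_j\rangle} = e^{\langle t, X_j\rangle}/Z(t)$. Summing over $j$ gives
\[
Z(t') = \sum_{j=1}^{m} \frac{e^{\langle t, X_j\rangle}}{Z(t)} = \frac{Z(t)}{Z(t)} = 1,
\]
which is the normalization condition defining a standard solution.

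Finally, to justify the definite article (\emph{the} standard solution), I would note uniqueness: any other standard solution $t''$ must be of the form $t'' = t' + c\eta$ for some $c \in \mathbb{R}$ by Lemma \ref{lem-entropy}, and then the same coplanarity computation gives $Z(t'') = e^c Z(t') = e^c$; imposing $Z(t'')=1$ forces $c=0$, hence $t'' = t'$. There is no serious obstacle here — the whole argument is a one-line consequence of the coplanar relation $\langle X_j,\eta\rangle=1$ combined with the uniqueness clause of Lemma \ref{lem-entropy}.
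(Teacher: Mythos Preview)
Your proof is correct and follows essentially the same approach as the paper: both use the coplanarity relation $\langle X_j,\eta\rangle=1$ to compute $Z(t+d\eta)=e^d Z(t)$ and then choose $d=-\log Z(t)$. Your version is simply more explicit, spelling out why $t'$ remains a solution and adding the uniqueness argument, both of which the paper leaves implicit.
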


\begin{proof} 
Let $t$ be a solution in Lemma \ref{lem-entropy}. Let $t'=t+d\eta$.
Then
$Z(t')=e^dZ(t)$
and $ Z(t')=1$ when $d=-\log Z(t)$.
 \end{proof}

\begin{lemma}\label{thm-standard} Let $X$ and $Y$ are two collections of coplanar vectors with $\eta_s\neq 0$ and $\eta'_s\neq 0$.
If $\gamma_{_X}=\gamma_{_Y}$, then
for any $\theta$ belongs to the interior of ${\mathbf C}$, they have the same standard solution, \textit{i.e.},
$$t'=(\tilde t)'.$$
\end{lemma}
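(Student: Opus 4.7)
\textbf{Proof proposal for Lemma \ref{thm-standard}.}

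The plan is to recover the standard solution $t'(\theta)$ directly from the growth function as a gradient, and then to deduce $t' = (\tilde t)'$ from $\gamma_X = \gamma_Y$. First, I extend $\gamma$ to a positively homogeneous function of degree $1$ on the whole cone $\mathbf{C}$; the limit $\lim_{k\to\infty}\log \mathbf{m}(k\theta)/k$ makes sense for any $\theta \in \mathbf{C}$, not only unit vectors, and satisfies $\gamma(c\theta)=c\gamma(\theta)$ for $c>0$. Because the standard solution satisfies $\log Z(t')=0$, Theorem \ref{thm-Z} reduces to the clean identity
\[
   \gamma_X(\theta) = -\langle t'(\theta),\theta\rangle,\qquad \gamma_Y(\theta) = -\langle (\tilde t)'(\theta),\theta\rangle,
\]
valid on $\mathrm{int}\,\mathbf{C}$. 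Smoothness of $\theta\mapsto t'(\theta)$ on $\mathrm{int}\,\mathbf{C}$ is inherited from the $C^\infty$-diffeomorphism $\beta\mapsto t_\beta$ established in Section 5, combined with $\beta=\theta/\langle\theta,\eta\rangle$.

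Next, I compute $\nabla\gamma_X(\theta)$ by the chain rule:
\[
   \frac{\partial \gamma_X}{\partial \theta_i}(\theta)
      = -\,t'_i(\theta) - \Bigl\langle \frac{\partial t'}{\partial \theta_i}(\theta),\,\theta\Bigr\rangle.
\]
The key observation is that the second term vanishes. Differentiate the normalization $\sum_{j=1}^m e^{\langle t'(\theta),X_j\rangle} = 1$ with respect to $\theta_i$ to get
\[
   \sum_{j=1}^m e^{\langle t',X_j\rangle}\Bigl\langle \frac{\partial t'}{\partial \theta_i},X_j\Bigr\rangle
      = \Bigl\langle \frac{\partial t'}{\partial \theta_i},\,\sum_{j=1}^m p_j X_j\Bigr\rangle = 0,
\]
where $p_j := e^{\langle t',X_j\rangle}$. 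Since $\sum_j p_j X_j = \beta$ by Lemma \ref{lem-entropy}, this says $\partial t'/\partial \theta_i \perp \beta$; as $\theta$ is a positive multiple of $\beta$, also $\partial t'/\partial \theta_i \perp \theta$. Hence $\nabla \gamma_X(\theta) = -t'(\theta)$, and the identical argument applied to $Y$ gives $\nabla \gamma_Y(\theta) = -(\tilde t)'(\theta)$.

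Finally, the hypothesis $\gamma_X = \gamma_Y$ on $\mathrm{int}\,\mathbf{C}$ forces $\nabla \gamma_X = \nabla \gamma_Y$ there, and comparing the two expressions above yields $t'(\theta) = (\tilde t)'(\theta)$ for every $\theta$ in $\mathrm{int}\,\mathbf{C}$, which is the claim. The only conceptual subtlety, more a guiding insight than an obstacle, is recognizing that the normalization $Z(t')=1$ is exactly what decouples $\nabla \gamma$ from the implicit $\theta$-dependence of $t'(\theta)$; this is precisely what allows the argument to go through without any hypothesis relating $\eta$ and $\eta'$, in contrast to the single-$\eta$ situation of Theorem \ref{thm-XY}.
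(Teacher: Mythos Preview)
Your proof is correct and, in fact, conceptually cleaner than the paper's. Both arguments recover the standard solution from the first derivatives of $\gamma$, but the executions differ. The paper restricts $\gamma$ to the unit sphere, uses $\theta_1,\dots,\theta_{s-1}$ as independent variables, and works with the auxiliary normalization $t_s=0$ (which is where the hypotheses $\eta_s\neq 0$, $\eta'_s\neq 0$ enter); it then invokes Lemma~\ref{lem-key}(ii) and formula~\eqref{diff} to express each coordinate $t'_j$ in terms of $\gamma$, $\partial\gamma/\partial\theta_j$, and $t'_s$. You instead extend $\gamma$ homogeneously to the full cone and work directly with the $Z=1$ normalization, so that Theorem~\ref{thm-Z} collapses to $\gamma(\theta)=-\langle t'(\theta),\theta\rangle$ and a single coordinate-free differentiation yields $\nabla\gamma=-t'$. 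The decisive observation---that differentiating the constraint $Z(t'(\theta))=1$ kills the implicit term $\langle \partial t'/\partial\theta_i,\theta\rangle$ because $\nabla Z(t')=\beta\parallel\theta$---is exactly the mechanism that in the paper's calculation is hidden inside the proof of Lemma~\ref{lem-key}(i). Your route has the bonus of showing that the assumptions $\eta_s\neq 0$ and $\eta'_s\neq 0$ are in fact unnecessary for this lemma; they are artifacts of the paper's choice of gauge.
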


\begin{proof}  Let $t$ be the solution in Lemma \ref{lem-entropy} with $t_s=0$, then the $s$-th coordinate of the corresponding standard solution is $t'_s=-\eta_s \log Z(t)$. Similarly, we have $(\tilde t)'_s=- \eta'_s \log \tilde Z(\tilde t)$.
Hence, by Lemma \ref{lem-key} (ii), we get
$$
t'_s=-\eta_s \log Z(t)=-\eta'_s \log \tilde Z(\tilde t)=({\tilde t})'_s.
$$
 For $j=1,\dots, s-1$, using \eqref{diff},
$$
 t'_j=
t_j-\eta_j\log Z(t)=-\frac{\partial \gamma}{\partial \theta_j}-\frac{\theta_j}{\theta_s}\eta_s\log Z(t)
= -\frac{\partial \gamma}{\partial \theta_j}+ \frac{\theta_j}{\theta_s}t'_s.
$$
 So $t'_j=(\tilde t)'_j$.
\end{proof}

Let $z=(z_1,\dots, z_s)$ and $n=(n_1,\dots,n_s)$.  Denote $z^n=z_1^{n_1}\cdots z_s^{n_s}$, and define
$$
P(z)=\sum_{j=1}^m z^{X_j}, \quad Q(z)=\sum_{j=1}^{m'} z^{Y_j}.
$$

\begin{lemma}\label{manifold} Under the assumptions of Lemma \ref{thm-standard},
the algebraic equations
$P(z)=1$ and $Q(z)=1$ have infinitely many common solutions.
\end{lemma}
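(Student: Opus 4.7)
\textbf{Proof plan for Lemma \ref{manifold}.}
The strategy is to exhibit a whole $(s-1)$-parameter family of common solutions, one for each direction $\theta$ in the interior of $\mathbf{C}$, by passing to exponentials of the common standard solution. Given such a $\theta$, set $\beta=\theta/\langle\theta,\eta\rangle$ and let $t=t_\beta\in\mathbb{R}^s$ (resp.\ $\tilde t=\tilde t_\beta$) be the unique solution of $\nabla Z(t)/Z(t)=\beta$ (resp.\ $\nabla\tilde Z(\tilde t)/\tilde Z(\tilde t)=\beta$) satisfying $t_s=0$ (resp.\ $\tilde t_s=0$); existence and uniqueness come from Section 5 together with the standing assumptions $\eta_s\neq 0$ and $\eta'_s\neq 0$. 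Form the standard solutions $t'=t-(\log Z(t))\eta$ and $(\tilde t)'=\tilde t-(\log\tilde Z(\tilde t))\eta'$, which by construction satisfy $Z(t')=\tilde Z((\tilde t)')=1$. By Lemma~\ref{thm-standard}, $t'=(\tilde t)'$; denote this common vector by $\tau(\beta)\in\mathbb{R}^s$.

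Now set $z=z(\beta):=(e^{\tau_1(\beta)},\dots,e^{\tau_s(\beta)})$ so that $z^{X_j}=e^{\langle\tau(\beta),X_j\rangle}$. Then
\[
P(z)=\sum_{j=1}^{m}e^{\langle\tau(\beta),X_j\rangle}=Z(\tau(\beta))=1,
\qquad
Q(z)=\sum_{j=1}^{m'}e^{\langle\tau(\beta),Y_j\rangle}=\tilde Z(\tau(\beta))=1,
\]
the second equality using that $\tau(\beta)$ is simultaneously the standard solution associated with $Y$. Hence every $\beta\in\mathrm{int}(H_\eta\cap\mathbf{C})$ yields a genuine common solution of $P(z)=Q(z)=1$.

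It remains to produce infinitely many distinct $z$'s. I would show injectivity of $\beta\mapsto\tau(\beta)$: if $\tau(\beta_1)=\tau(\beta_2)$ then
\[
t_{\beta_1}-t_{\beta_2}=\bigl(\log Z(t_{\beta_1})-\log Z(t_{\beta_2})\bigr)\eta,
\]
and reading off the $s$-th coordinate (where both left-hand entries vanish by normalization) together with $\eta_s\neq 0$ forces $Z(t_{\beta_1})=Z(t_{\beta_2})$, whence $t_{\beta_1}=t_{\beta_2}$; the diffeomorphism property of $\beta\mapsto t_\beta$ proved in Section 5 then gives $\beta_1=\beta_2$. Since $\mathrm{int}(H_\eta\cap\mathbf{C})$ is an $(s-1)$-dimensional open set (the case $s=1$ being trivial, as $\eta$-coplanarity forces all $X_j$ equal and all $Y_j$ equal), and since the coordinate-wise exponential is injective on $\mathbb{R}^s$, the family $\{z(\beta)\}$ is infinite.

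The main obstacle is purely the bookkeeping required to juggle the two different normalizations of the Lagrange-multiplier vector $t$ (the Section 5 choice $t_s=0$ versus the standard choice $Z(t)=1$), and to carry the injectivity through both the shift by a multiple of $\eta$ and the exponential map. Once that bookkeeping is clean, the existence of the common solutions is a direct substitution and the infinitude follows from the smooth injectivity of $\beta\mapsto t_\beta$ inherited from Section 5.
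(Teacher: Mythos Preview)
Your proposal is correct and follows essentially the same route as the paper: for each interior direction $\theta$, pass to the standard solution $t'=(\tilde t)'$ furnished by Lemma~\ref{thm-standard}, exponentiate coordinatewise, and observe $P(e^{t'})=Z(t')=1$ and $Q(e^{t'})=\tilde Z((\tilde t)')=1$. The paper's proof stops there, leaving the infinitude of distinct solutions implicit; your added injectivity argument (reading off the $s$-th coordinate to kill the $\eta$-shift, then using that $\beta$ is recovered from $t_\beta$ via $\beta=\nabla Z(t_\beta)/Z(t_\beta)$) is a clean way to make that explicit, though note that injectivity of $\beta\mapsto t_\beta$ is immediate from its defining equation rather than requiring the full diffeomorphism statement.
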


\begin{proof} Notice that $P(e^t)= Z(t)$ and $Q(e^t) = \tilde Z(t)$ where
$e^{t}:=(e^{t_1},\dots, e^{t_s})$.
For any $\theta$ belongs to the interior of ${\mathbf C}$, let $t'$ be the
corresponding standard solution. Then   $e^{t'}=e^{(\tilde t)'}$ is a common solution
of $P(z)=1$ and $Q(z)=1$.
\end{proof}

\subsection{$H_\eta$ and $H_{\eta'}$ are parallel when $s=2$.}
We will reduce Theorem \ref{thm-XY-new} to Theorem \ref{thm-XY}. The key point is to show that $H_\eta$ and $H_{\eta'}$ are parallel.
Essentially  we only need to prove the parallelism in the two-dimensional case and the general
case will be reduced to this special case. In the following, we only need the above lemmas
in the two-dimensional case.

First, we recall some basic definitions and facts on algebraic plane curve.

An \emph{algebraic plane curve} is a curve consisting of the points of the plane whose
coordinates $x,y$ satisfy an equation $f(x,y)=0$ for some $f\in \R[x,y]$ . The curve is said to be {\em irreducible} if  $f$ is an irreducible polynomial.
It is well-known the polynomial ring $\R[x,y]$ is a unique factorization domain, that is,
any polynomial $f$ has a unique factorization
$f=f_1\dots f_n$ (up to constant multiples) as a product of irreducible factors $f_j$.
Hence, every algebraic curve is a union of several irreducible algebraic curves.
The following lemma is fundamental, see for example \cite{Sha}.

\begin{lemma}[\cite{Sha}]\label{lem-curve} Let $f\in \R[x,y]$ and $g\in \R[x,y]$ be two polynomials. Suppose that $f$ is irreducible polynomial and is not a factor of $g$. Then the system of equations $$
f(x,y)=g(x,y)=0$$ has only a finite
number of solutions.
\end{lemma}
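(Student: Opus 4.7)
The plan is to use the classical resultant to reduce the two-variable problem to two one-variable problems.

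First, since $\R[x,y]$ is a UFD and $f$ is irreducible with $f \nmid g$, the polynomials $f$ and $g$ are coprime. I would then form the resultant $R(x) := \operatorname{Res}_y(f, g) \in \R[x]$, viewing $f$ and $g$ as polynomials in $y$ with coefficients in $\R[x]$. The key fact is that $R \not\equiv 0$ whenever $f$ and $g$ are coprime in $\R[x,y]$ and $f$ has positive $y$-degree, which follows from Gauss's lemma: coprimality in the UFD $\R[x,y]$ survives passage to the PID $\R(x)[y]$, and the resultant vanishes precisely when there is a common factor of positive $y$-degree. For any common zero $(a, b)$, specializing $x = a$ forces the one-variable polynomials $f(a, y)$ and $g(a, y)$ to share the root $y = b$, hence $R(a) = 0$. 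Thus $a$ lies in the finite root set of $R$.

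A symmetric argument using $R'(y) := \operatorname{Res}_x(f, g) \in \R[y]$ constrains $b$ to a finite set. Consequently the common zero set of $f$ and $g$ is contained in the finite product
$\{a \in \mathbb{C} : R(a) = 0\} \times \{b \in \mathbb{C} : R'(b) = 0\}$
and is itself finite, which in particular bounds the number of real solutions.

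The main obstacle is the degenerate case where $f$ has $y$-degree zero, so that $f \in \R[x]$ and the resultant $R(x)$ is identically zero; the symmetric degeneracy with $R'(y)$ is analogous. In this case I would argue directly: $f \in \R[x]$ irreducible defines over $\mathbb{C}$ a finite union of vertical lines $\{x = a_i\}$, and on each such line one has $g(a_i, y) \not\equiv 0$ — otherwise every Galois conjugate of $(x-a_i)$ would also divide $g$, giving $f \mid g$ in $\R[x,y]$ and contradicting the hypothesis — so $g(a_i, y) = 0$ admits only finitely many $y$. Combining the generic and degenerate cases yields the claimed finiteness in full generality.
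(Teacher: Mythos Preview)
The paper does not prove this lemma; it is quoted as a standard fact from Shafarevich's \emph{Basic Algebraic Geometry} and used as a black box in the proof of Theorem~\ref{thm-2-dim}. Your resultant argument is correct and is essentially the classical proof one finds in that reference.

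Two minor remarks on presentation. First, the inference ``$f(a,y)$ and $g(a,y)$ share the root $b$, hence $R(a)=0$'' is delicate if the leading $y$-coefficients of $f$ or $g$ vanish at $x=a$, since then the resultant need not specialize to $\operatorname{Res}_y(f(a,y),g(a,y))$; the clean justification is the identity $R(x)=A(x,y)f(x,y)+B(x,y)g(x,y)$ with $A,B\in\R[x,y]$, which yields $R(a)=0$ directly. Second, once you know there are only finitely many admissible $a$'s, the second resultant is unnecessary: for each such real $a$ the polynomial $f(a,y)$ cannot vanish identically (otherwise $(x-a)\mid f$, forcing $f=c(x-a)$ and contradicting $\deg_y f>0$), so it has only finitely many roots $b$. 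These are streamlinings, not gaps.
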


Let $f(x,y)$ be a polynomial. A \emph{highest term} of $f$ is a
 term of $f$ whose degree is  equal to  the degree of $f$.
The homogenous polynomial consisting of all the highest terms of $f$
will be denoted by $H_f$. It is  called the \emph{principal part}
of $f$.
For example, if $f(x,y)=ax^2+bxy+cy^2+dx+ey+f$, then $H_f(x,y)=ax^2+bxy+cy^2$.

\begin{theorem}\label{thm-2-dim}  Suppose  $X=\{X_1,\dots, X_m\}\subset \Z^2$ is $\eta$-coplanar and
$Y=\{Y_1,\dots, Y_{m'}\}\subset \Z^2$ is $\eta'$-coplanar, and that    $X$ and $Y$
  define the same directional growth function. Then
 $\eta=c\eta'$ for some $c>0$. In other words, the line containing $X$ and the line containing $Y$ are parallel.
\end{theorem}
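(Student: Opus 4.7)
The plan is to combine Lemma \ref{manifold} with a Minkowski-summand analysis of the Newton polytopes of $P-1$ and $Q-1$ in $\R^2$. By Lemma \ref{manifold}, the Laurent polynomial equations $P(z)=1$ and $Q(z)=1$ share infinitely many common solutions in $(\mathbb{C}^*)^2$. After multiplying by monomials $z^A,z^B$ with $A,B\in\N^2$ to clear negative exponents, one obtains polynomials $F=z^A(P-1)$ and $G=z^B(Q-1)$ in $\mathbb{C}[z_1,z_2]$ still having infinitely many common zeros, and Lemma \ref{lem-curve} then produces a shared irreducible factor $h\in\mathbb{C}[z_1,z_2]$. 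Because the constant term of $P-1$ equals $-1\ne 0$ (as no $X_j$ vanishes), neither $z_1$ nor $z_2$ divides $P-1$, so $h$ may be taken non-monomial, i.e., an honest irreducible Laurent factor of both $P-1$ and $Q-1$.

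Next, let $X_{i_1},X_{i_2}$ denote the two extreme points of $\{X_1,\dots,X_m\}$ on the affine line $\ell_X=\{X\in\R^2:\langle X,\eta\rangle=1\}$. Then $N(P-1)=\mathrm{conv}\{0,X_{i_1},X_{i_2}\}$ is a triangle whose three outer edge-normals are $\nu_1\perp X_{i_1}$ (pointing away from $X_{i_2}$), $\nu_2\perp X_{i_2}$ (pointing away from $X_{i_1}$), and a positive multiple of $\eta$ itself (perpendicular to the side $[X_{i_1},X_{i_2}]\subset\ell_X$ and pointing outward from the origin). A direct consequence of the triangle being non-degenerate---which in turn reflects the fact that $0\notin\ell_X$---is that no two of $\{\nu_1,\nu_2,\eta\}$ are antiparallel; if, say, $\nu_1=-\nu_2$, the triangle closure identity $\sum_i\ell_i\nu_i=0$ would force $\nu_3\parallel\nu_1$ and hence a degenerate triangle. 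The same description applies to $N(Q-1)$, with outer normals $\{\nu_1',\nu_2',\eta'\}$.

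I would then run the Minkowski summand argument. Using $N(fg)=N(f)+N(g)$ together with $h\mid(P-1)$, the polytope $N(h)$ is a Minkowski summand of $N(P-1)$, so each of its outer edge-normals lies in $\{\nu_1,\nu_2,\eta\}$. Since no two of these are antiparallel, $N(h)$ cannot be a segment (whose two outer normals are antiparallel); non-monomiality of $h$ excludes a single point. Hence $N(h)$ is itself a triangle with outer normals $\{\nu_1,\nu_2,\eta\}$, and by the symmetric argument with $Q-1$, also with outer normals $\{\nu_1',\nu_2',\eta'\}$. The two three-element normal sets therefore coincide, up to positive scaling:
\[\{\nu_1,\nu_2,\eta\}=\{\nu_1',\nu_2',\eta'\}.\]
Finally, the hypothesis ${\mathbf C}_X={\mathbf C}_Y$ forces the extreme-ray pairs $\{\R^+X_{i_1},\R^+X_{i_2}\}$ and $\{\R^+Y_{j_1},\R^+Y_{j_2}\}$ of the common cone to coincide, so $\nu_k=\nu_k'$ for $k=1,2$ after relabeling; the remaining members $\eta$ and $\eta'$ of the two equal normal sets are then positive scalar multiples, yielding $\eta=c\eta'$ for some $c>0$.

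The main obstacle is the Minkowski step above, and especially the exclusion of a segment summand: this is precisely where the non-antiparallelism of the outer normals of $N(P-1)$ is decisive, and that geometric property is itself a direct consequence of $0\notin\ell_X$. The remaining steps (producing $h$ from the infinitude of common zeros, and identifying $\nu_k=\nu_k'$ from the cone hypothesis) are formal applications of Lemma \ref{lem-curve} and of the combinatorics of 2-dimensional cones.
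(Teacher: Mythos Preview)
Your plan is correct and takes a genuinely different route from the paper's. Both arguments begin the same way: invoke Lemma~\ref{manifold} to get infinitely many common zeros of $P-1$ and $Q-1$, then Lemma~\ref{lem-curve} to extract a nontrivial common factor. From there the paper first applies an integral linear change of coordinates (Lemma~\ref{lem-tra}) to force ${\mathbf C}=(\R^+)^2$ and $\eta=(1/n,1/n)$, so that $P$ is homogeneous of degree $n$ and contains both $x^n$ and $y^n$; it then argues with \emph{principal parts}: writing $P-1=S\cdot T$ and $Q-1=S\cdot R$ one has $H_S\cdot H_T=P$, and $H_S$ cannot be a monomial (else it would divide both $x^n$ and $y^n$), hence $H_Q=H_S\cdot H_R$ has at least two terms, placing two distinct $Y_i,Y_j$ at equal total degree and forcing $\eta'_1=\eta'_2$. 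Your Newton-polytope route replaces this with the coordinate-free fact that a nondegenerate triangle has no antiparallel outer normals, so its only Minkowski summands are points and homothetic triangles; this yields $\{\nu_1,\nu_2,\eta\}=\{\nu_1',\nu_2',\eta'\}$ directly, and the cone hypothesis ${\mathbf C}_X={\mathbf C}_Y$ matches $\nu_k$ with $\nu_k'$. Your approach is more invariant and sidesteps the normalization of $\eta$; the paper's is more elementary, needing only leading homogeneous forms rather than Minkowski-summand combinatorics. One small omission: Lemma~\ref{manifold} is stated under the hypotheses $\eta_s\ne 0$ and $\eta'_s\ne 0$, so you should first pass through an integral transformation as in Lemma~\ref{lem-tra} before invoking it (the paper's preliminary reduction handles this at the same time).
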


\begin{proof} By applying a suitable linear transformation, we may assume that
${\mathbf C}=(\R^+)^2$ and
  $X_1,\dots, X_m, Y_1,\dots, Y_{m'}\in \N^2$ (see Lemma \ref{lem-tra}.)
We can further assume that
$\eta=(1/n,1/n)$ for some integer $n\geq 1$. Indeed, let $(x_1,y_1)$ and $(x_2,y_2)$ be two vectors
in $X$ locating on two boundary rays of ${\mathbf C}={\mathbf C}_X={\mathbf C}_Y$, respectively.
Let $T$ be the linear transformation which maps $(x_1,y_1)$ and $(x_2,y_2)$ to $(1,0)$ and $(0,1)$, respectively.
Clearly $T$ is invertible, and the entries of $T$ belong to $\mathbb Q$. It follows that $T(x,y)\in {\mathbb Q}^2$ for
all $(x,y)\in X\cup Y$. Hence, there exists a positive integer $n$ such that $nT$ is an integral matrix, and
$nT(x,y)$ are integral vectors for all $(x,y)\in X\cup Y$. It is seen that $nT$ is the desired transformation.

Our aim is then  to show that $\eta'=(c,c)$ for some $c>0$.
Consider the polynomials
$$
P(x,y)=\sum_{j=1}^m x^{X_{j,1}}y^{X_{j,2}},\quad Q(x,y)=\sum_{j=1}^{m'} x^{Y_{j,1}}y^{Y_{j,2}}.
$$
By Lemma \ref{manifold}, $P(x, y)-1=0$ and $Q(x, y)-1=0$ have infinitely common roots. Hence,
by Lemma \ref{lem-curve},
$P(x,y)-1$ and $Q(x,y)-1$ have a common non-trivial factor. Let us denote their greatest common
factor by $S(x,y)$.

We claim that the principal part $H_S(x,y)$ of $S(x,y)$ contains at least two terms.
Let $P(x,y)-1=S(x,y)T(x,y)$. Since $P(x,y)$ is homogenous,
$$H_S(x,y)\cdot H_T(x,y)=H_P(x,y)=P(x,y).$$
Suppose that $H_S(x,y)$ is a monomial, say, $H_S(x,y)=x^py^q$.   Then it must divides each term of $P(x,y)$,
including $x^n$ and $y^n$ (Notice that $x^n$ and $y^n$ are two terms in $P(x,y)$ with the coefficients different from $0$).
It is impossible and our claim is thus proved.

Let $Q(x,y)-1=S(x,y)R(x,y)$. Then
$$
H_S(x,y)\cdot H_R(x,y)=H_Q(x,y).
$$
So $H_Q(x,y)$ has at least two terms. Therefore, $Q(x,y)$ has two terms with the same degree. That is to say,
there exist two different integers $i$ and $j$ such that
$$
(Y_{i,1}, Y_{i, 2})\not= (Y_{j,1}, Y_{j, 2}), \qquad
   Y_{i,1}+ Y_{i, 2} = Y_{j,1}+ Y_{j, 2}.
$$
Recall that
$$
    \eta'_1 Y_{i,1} + \eta'_2 Y_{i, 2} =1, \quad \eta'_1 Y_{j,1} + \eta'_2 Y_{j, 2} =1.
$$
Then solving $\eta_1'$ and $\eta'_2$ as unknown leads to
$$
   \eta'_1 = \frac{Y_{j,2} - Y_{i, 2}}{D}
    = \frac{Y_{i,1} - Y_{j, 1}}{D} = \eta'_2
$$
where $D = Y_{i,1} Y_{j,2} - Y_{i,2} Y_{j,1}\not=0$.
Thus we have proved $\eta'=c\eta$ for some $c$.
\end{proof}

\subsection{Proof of Theorem \ref{thm-XY-new}}
The proof of Theorem \ref{thm-XY-new} is decomposed into several lemmas.


 \begin{lemma}\label{lem-ite} Let $X=\{X_1,\dots, X_m\}$ and    $X^{(p)}$ denotes the $p$-th iteration of $X$. Then

 {\rm (i)}  $X$ and $X^{(p)}$ defines the same directional growth function.

 {\rm (ii)} If $X$ is $\eta$-coplanar, then $X^{(p)}$ is $\eta/p$-coplanar.
 \end{lemma}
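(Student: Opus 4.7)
Part (ii) is immediate: for any word $\mathbf{i}=(i_1,\dots,i_p)\in\{1,\dots,m\}^p$ we have $\kappa(\mathbf{i})=\sum_{k=1}^{p}X_{i_k}$, so $\langle \eta/p,\kappa(\mathbf{i})\rangle=\frac{1}{p}\sum_{k=1}^{p}\langle \eta,X_{i_k}\rangle=1$, which says precisely that $X^{(p)}$ is $(\eta/p)$-coplanar.

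For part (i), I plan to work under the $\eta$-coplanar hypothesis on $X$, which is the only setting in which this lemma is invoked in Section~8. The key observation is that when $X$ is $\eta$-coplanar, every path $\omega\in\Sigma_m^*$ with $\kappa(\omega)=z$ automatically has length $|\omega|=\langle\eta,z\rangle$, so the length of a representation is determined by $z$ itself. Combining this with the obvious concatenation bijection between words in $\Sigma_m^*$ of length $np$ and words in $\Sigma_{m^p}^*$ of length $n$ yields simultaneously the semi-group identification $\CJ_{X^{(p)}}=\{z\in\CJ_X:\ \langle\eta,z\rangle\in p\N\}$ and the multiplicity identity $\mathbf{m}_{X^{(p)}}(z)=\mathbf{m}_X(z)$ for every $z\in\CJ_{X^{(p)}}$.

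Next I would note that since $pX_j=\kappa(jj\cdots j)\in X^{(p)}$, the inclusion $p\,\CJ_X\subset \CJ_{X^{(p)}}$ holds, so Lemma~\ref{dense} applied to $\CJ_X$ implies that $\CJ_{X^{(p)}}$ is relatively dense in ${\mathbf C}_{X^{(p)}}={\mathbf C}_X$ with some constant $pR_0$. Fix a unit vector $\theta\in{\mathbf C}_X$ and, for each $k$, pick $z_k\in\CJ_{X^{(p)}}$ and $z_k'\in\CJ_X$ realizing the minima in the respective definitions of $\mathbf{m}_{X^{(p)}}(k\theta)$ and $\mathbf{m}_X(k\theta)$. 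Both points lie within a uniformly bounded distance of $k\theta$, so $\|z_k-z_k'\|\le C_0$ for some $k$-independent constant $C_0$. The multiplicity identity above gives $\mathbf{m}_{X^{(p)}}(k\theta)=\mathbf{m}_X(z_k)$, and an application of Theorem~\ref{Q(x)} with this $C_0$ produces a polynomial $Q$ such that
$$\frac{1}{Q(\|z_k\|)}\ \le\ \frac{\mathbf{m}_{X^{(p)}}(k\theta)}{\mathbf{m}_X(k\theta)}\ =\ \frac{\mathbf{m}_X(z_k)}{\mathbf{m}_X(z_k')}\ \le\ Q(\|z_k\|).$$
Since $\|z_k\|=O(k)$, taking logarithms, dividing by $k$ and letting $k\to\infty$ forces $\gamma_{X^{(p)}}(\theta)=\gamma_X(\theta)$, which is (i).

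The only step that really requires thought is the clean identity $\mathbf{m}_{X^{(p)}}(z)=\mathbf{m}_X(z)$ on $\CJ_{X^{(p)}}$; this is exactly where the coplanar hypothesis is convenient, since it forces every representation of $z$ by $X$ to have the same length, which is automatically a multiple of $p$ when $z\in\CJ_{X^{(p)}}$. Once this identity is in hand the remainder is a routine combination of Lemma~\ref{dense} and Theorem~\ref{Q(x)}, so no additional ideas are needed.
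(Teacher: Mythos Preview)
Your argument is correct and matches the paper's route: establish $\mathbf{m}_{X^{(p)}}(z)=\mathbf{m}_X(z)$ on $\mathcal{J}_{X^{(p)}}$ via the block-of-length-$p$ bijection, then pass to the growth function using relative density (and Theorem~\ref{Q(x)}, which the paper leaves implicit). You are in fact more careful than the paper on one point: the paper simply asserts that any $X$-path to a point $z\in\mathcal{J}_{X^{(p)}}$ has length divisible by $p$, but this is precisely where the coplanar hypothesis is needed (it fails, e.g., for $X=\{1,2\}\subset\mathbb{Z}$, $p=2$, $z=4$), and you rightly supply and use it.
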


 \begin{proof} (ii) is obvious. In the following, we prove (i).
 Let us denote
 $$
 X^{(p)}=\{W_1,\dots, W_{m^p}\}
 $$
 which is the set of all $\sum_{j=1}^p X_{\omega_j}$ with $\omega_1\dots\omega_p\in \{1,\dots,m\}^p$.
 Notice that many vectors in $X^{(p)}$ are repeated. For example, $X_1+ X_2 + \cdots+ X_m$ and
 $X_m +\cdots + X_2 + X_1$ are the same.
 Let
 $${\CJ}=X_1\N+\cdots+X_m\N, \qquad
 {\mathcal J}'=W_1\N+\cdots+W_{m^p}\N.
 $$
 Then ${\mathcal J}'\subset \CJ$ and $\CJ'$ is relatively dense in $\CJ$. Moreover, take any $z\in \CJ'$ and let
 $\omega_1\dots \omega_n\in \{1,\dots,m\}^n$ be a path relative to the walk guided by $X$ such that
 $$
 z=\sum_{j=1}^n  X_{\omega_j}.
 $$
 Then $n$ must be a multiple of $p$. Write
 $$
 z=\sum_{k=1}^{n/p} \sum_{j=1}^p X_{\omega_{j+(k-1)p}}.
 $$
 Hence $(\omega_1\dots \omega_p)(\omega_{p+1}\dots \omega_{2p})\dots (\omega_{n-p+1}\dots\omega_n)$ is  a path  relative to the walk guided by $X^{(p)}$. Therefore
 $
 {\mathbf m}_X(z)={\mathbf m}_{X^{(p)}}(z)
 $ for all $z\in {\mathcal J}'$.
 It follows that they define the same direction growth function.
 \end{proof}

Let us  recall some notions on convex set (see \cite{Rock}).
A \emph{face} of a convex set $C$ is a convex subset $C'$ of $C$ such that every closed line segment in $C$ with a relative
interior point in $C'$ has both end points in $C'$. A vertex (i.e. an extremal point) of a convex set $C$ is regarded as a $0$-dimensional face. A face of dimension $1$ is conventionally called an \emph{edge}.

If $C={\mathbf C}_X$, then the only $0$-dimensional face is the origin, a $1$-dimensional face
is also called an \emph{extreme  ray}.

A set is called a \emph{polytope} if it is the convex hull of finitely many points.

\medskip

\begin{lemma} Under the condition of Theorem \ref{thm-XY-new},
we have $\eta=c\eta'$ for some $c>0$.
\end{lemma}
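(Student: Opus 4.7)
The plan is to reduce to the already established two-dimensional case (Theorem~\ref{thm-2-dim}) by restricting to two-dimensional faces of the common cone ${\mathbf C}={\mathbf C}_X={\mathbf C}_Y$; the case $s\le 2$ is handled directly by Theorem~\ref{thm-2-dim}, so I assume $s\ge 3$.

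First I would establish a face-restriction principle. Let $F$ be a $2$-dimensional face of ${\mathbf C}$ with linear span $V$, and set $X_F=\{X_i:X_i\in F\}$ and $Y_F=\{Y_j:Y_j\in F\}$. Since $F$ is a face of the cone, any representation $z=\sum u_iX_i$ with $z\in F$ and $u_i\ge 0$ forces $u_i=0$ whenever $X_i\notin F$, so $\mathbf{m}_X(z)=\mathbf{m}_{X_F}(z)$ on $F\cap\mathcal J_X$; combined with the relative density of $\mathcal J_{X_F}$ in $F$ (Lemma~\ref{dense}) and the fact that $d(k\theta,{\mathbf C}\setminus F)\to\infty$ for $\theta\in\operatorname{relint}(F)$, this passes to the nearest-point extension and gives $\gamma_X|_{\operatorname{relint}(F)}=\gamma_{X_F}$. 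The same argument for $Y$ yields $\gamma_{X_F}=\gamma_{Y_F}$ on $\operatorname{relint}(F)$. Letting $\eta_F,\eta'_F$ denote the orthogonal projections of $\eta,\eta'$ onto $V$, every $X_i\in X_F\subset V$ satisfies $\langle X_i,\eta_F\rangle=\langle X_i,\eta\rangle=1$, so $X_F$ is $\eta_F$-coplanar in $V$ and similarly $Y_F$ is $\eta'_F$-coplanar. Each of the two extreme rays of $F$ is an extreme ray of ${\mathbf C}$ and therefore carries at least one vector from $X$ and one from $Y$, so $X_F,Y_F$ span $V$ and ${\mathbf C}_{X_F}={\mathbf C}_{Y_F}=F$. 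Invoking the general-lattice version of Theorem~\ref{thm-2-dim} (justified by the remark at the start of Section~2, applied with the lattice in $V$ generated by $X_F\cup Y_F$) I obtain a scalar $c_F>0$ with $\eta_F=c_F\eta'_F$.

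To finish I would glue the per-face scalars into one. If two $2$-faces $F_1,F_2$ share an extreme ray of direction $v$, some $Y_j$ lies on that ray with $\langle Y_j,\eta'\rangle=1$, so $\langle v,\eta'\rangle\ne 0$; the shared value $\langle v,\eta'\rangle=\langle v,\eta'_{F_1}\rangle=\langle v,\eta'_{F_2}\rangle$ together with its $\eta$-analogue forces $c_{F_1}=c_{F_2}$. The graph whose vertices are $2$-faces of a pointed full-dimensional polyhedral cone and whose edges are shared extreme rays is connected for $s\ge 3$ (it is the line graph of the $1$-skeleton of a transversal polytope, which is connected by Balinski's theorem), so a single constant $c>0$ serves for every $F$. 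Then for each $v$ lying in the span of some $2$-face we have $\langle v,\eta\rangle=c\langle v,\eta'\rangle$, and since these spans collectively contain every extreme ray of ${\mathbf C}$ and the extreme rays span $\mathbb R^s$, the identity extends linearly to $\eta=c\eta'$. The step I expect to be most delicate is this combinatorial gluing together with verifying that all three ingredients—equality of growth functions, coplanarity with the projected normal $\eta_F$, and the lattice hypothesis of Theorem~\ref{thm-2-dim}—genuinely survive restriction to a face; each is plausible, but the bookkeeping (especially when the cone is non-simplicial) is where a careful write-up is required.
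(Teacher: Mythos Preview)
Your proposal is correct and follows essentially the same route as the paper: restrict to each two-dimensional face $D$ of the common cone, observe that $\gamma_{X_D}=\gamma_X$ there because a face traps all summands, apply the two-dimensional rigidity result to obtain a face-dependent constant, and then glue these constants into a single $c$ using the connectivity of the edge graph of the transversal polytope $K_\eta={\mathbf C}\cap H_\eta$. The only cosmetic difference is bookkeeping: the paper indexes the constant by vertices of $K_\eta$ (extreme rays of ${\mathbf C}$) and walks along edges, whereas you index by $2$-faces and walk along shared extreme rays---the line-graph dual of the same picture---and you phrase coplanarity via the orthogonal projections $\eta_F,\eta'_F$ rather than via parallel lines in the face.
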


\begin{proof}

Let $D$ be a two-dimensional face of   ${\mathbf C}$.
Let $X_D=X\cap D$ be the collection of $X_j$ which locates in $D$. Similarly, we define $Y_D$.
Clearly $X_D$ and $Y_D$ are coplanar collections.

 Let  us define a Frobenius problem with defining data
$X_D$. Let ${\mathcal J}'$ be the semi-group generated by $X_D$.
For $z\in {\mathcal J}'$, as before, we define ${\mathbf m}_{X_D}(z)$ to be the number of paths terminated at $z$.
 Since $D$ is a face, $X_{i_1}+\cdots+X_{i_k}$ is in $D$ only if
 all the vectors $X_{i_j}$ belongs to $X_D$, so ${\mathbf m}_{X_D}(z)={\mathbf m}_{X}(z)$ for all $z\in {\mathcal J}'$.
For $x\in {\mathbf C}_{X_D}$, we define
${\mathbf m}_{X_D}(x)$ to be the multiplicity of the element in $\CJ'$ nearest to $x$.
By Theorem \ref{Q(x)}, the ratio of ${\mathbf m}_X(x)$ and
${\mathbf m}_{X_D}(x)$ is controlled by a polynomial on $\|x\|$.
Now, we define the directional growth function
$$
\gamma_{X_D}(\theta)=\lim_{k\to \infty} \frac{\log {\mathbf m}_{X_D}(k\theta)}{k}.
$$
Then $\gamma_{_{X_D}}(\theta)=\gamma_{_X}(\theta).$
Similarly, we define a Frobenius problem with defining data $Y_D$. It turns out that
 $$
 \gamma_{_{X_D}}(\theta)=\gamma_{_X}(\theta)=\gamma_{_Y}(\theta)=\gamma_{_{Y_D}}(\theta).
 $$

%
Therefore, by Theorem \ref{thm-2-dim}, the line containing $X_D$ is parallel to the line containing $Y_D$,
so there is a constant $c_D$ such that $y=c_Dx$ as soon as
$x\in X_D$ and $y\in Y_D$ are located on a same line.

Clearly $K_\eta={\bf C}_X\cap H_\eta$ is the polytope generated by $X$. Let $V_\eta$ be the vertex set of $K_\eta$.
Clearly $V_\eta\subset X$. Similarly we define $K_{\eta'}$ and $V_{\eta'}$.

For any $x\in V_\eta$, there is a point $y\in V_{\eta'}$ such that $x$ and $y$ are located on
a same extremal ray of ${\mathbf C}$. Let $c_x>0$ be the real number such that $y=c_xx$.

 If $x$ and $x'$ are the two end points of an edge of $K_\eta$,
 this edge determines a two dimensional face $D$ of ${\mathbf C}$. Then $c_x=c_{x'}=c_D$ by the above discussion.
Take  two arbitrary points $x,x''\in V_\eta$, there always exists a path (consisting of edges of $K_\eta$)
from $x$ to $x''$. We deduce that  $c_x=c_{x''}$.

Let us denote this common constant by $c$. Then
for any $y\in V_{\eta'}$,
$$
\langle y, c^{-1}{\eta} \rangle =\langle cx, c^{-1}{\eta} \rangle = \langle x, {\eta} \rangle =1.
$$
Since the points in $V_{\eta'}$ span the space $\R^d$,
we have $c^{-1}{\eta} = \eta'$, i.e. $\eta=c\eta'$.
\end{proof}

\begin{lemma} The constant $c$ in the last lemma is a rational number.
\end{lemma}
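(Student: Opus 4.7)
My plan is to extract $c$ explicitly as a ratio of two non-zero integers, obtained from corresponding integer coordinates of a pair of integral vectors related by $c$. The rationality will then be immediate.

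First I would unpack what the preceding lemma actually produced. There it was shown that for every vertex $x$ of the polytope $K_\eta = {\mathbf C}\cap H_\eta$ (so $x$ coincides with some $X_j \in X$ and lies on an extreme ray of ${\mathbf C}$), there exists a vertex $y$ of $K_{\eta'}$ on the \emph{same} extreme ray of ${\mathbf C}$; in particular $y = \lambda x$ for some $\lambda>0$. Combining the coplanarity conditions $\langle x,\eta\rangle = 1$ and $\langle y,\eta'\rangle = 1$ with the relation $\eta = c\eta'$, one computes
$$
1 \;=\; \langle y,\eta'\rangle \;=\; \lambda\langle x,\eta'\rangle \;=\; \frac{\lambda}{c}\,\langle x,\eta\rangle \;=\; \frac{\lambda}{c},
$$
so $\lambda = c$, i.e.\ $y = c\,x$. (This identity was already implicit in the construction of $c$, but I would state it explicitly as the starting point.)

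The decisive step is then purely arithmetic. Since $X, Y \subset \mathbb{Z}^s$, both $x$ and $y$ are integer vectors, and $x\neq 0$ because $\langle x,\eta\rangle = 1$. Choosing any coordinate index $i$ for which $x^i\neq 0$, the componentwise identity $y^i = c\,x^i$ yields
$$
c \;=\; \frac{y^i}{x^i} \;\in\; \mathbb{Q}.
$$

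I do not anticipate any genuine obstacle here: the substantive geometric work (identifying corresponding vertices and showing that the ratio on each extreme ray equals one and the same constant $c$) has already been carried out in the previous lemma, and rationality is then a one-line consequence of the integrality of the defining vectors. The only mild care needed is to ensure $x\neq 0$ so that division is legitimate, which is guaranteed by $\langle x, \eta\rangle = 1$.
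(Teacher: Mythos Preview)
Your argument is correct. The relation $y = c\,x$ with $x\in V_\eta\subset X$ and $y\in V_{\eta'}\subset Y$ is indeed exactly what the previous lemma established (it first defined $c_x$ by $y=c_x x$, showed all $c_x$ coincide, called the common value $c$, and only then derived $\eta=c\eta'$), so your ``unpacking'' step is not even needed: you may simply quote $y=cx$ from that proof. From there, $x,y\in\mathbb{Z}^s$ and $x\neq 0$ give $c=y^i/x^i\in\mathbb{Q}$ for any $i$ with $x^i\neq 0$.

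The paper takes a slightly different route: it observes that the system $\langle X_j,\eta\rangle=1$ ($j=1,\dots,m$) has integer coefficients, so by Cramer's rule $\eta\in\mathbb{Q}^s$, and likewise $\eta'\in\mathbb{Q}^s$; since $\eta=c\eta'$ with $\eta'\neq 0$, $c$ is rational. Both arguments are one-line consequences of integrality; yours is a bit more direct because it reuses the vertex relation $y=cx$ already in hand, while the paper's has the mild advantage of recording the independently useful fact that $\eta,\eta'$ are rational vectors.
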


\begin{proof}
Since the points in $X$ are integral, solving $\langle X_j,\eta\rangle=1$ by Cramer rule, we get the solution   $\eta$, whose entries  are rational numbers. The entries of $\eta'$ are also rational numbers.
It follows that
 $c$ is a rational number.
 \end{proof}

{\it Proof of Theorem \ref{thm-XY-new}.}
Let us write $c=q/p$.
Consider $X^{(q)}$, the $q$-th iteration of $X$,  and $Y^{(p)}$, the $p$-th iteration of $Y$.
Both of them are $(\eta/q)$-coplanar  and  define the same directional growth function by Lemma \ref{lem-ite}.
Therefore,  by Theorem \ref{thm-XY},  $X^{(q)}$ is a permutation of  $Y^{(p)}$.
$\Box$

\bigskip

\noindent \textbf{Acknowledgement.} The authors would like to thank Jia-yan Yao and Alain Rivi\`ere
for helpful discussions.

\end{document}